\numberwithin{equation}{section}
\theoremstyle{plain}
\newtheorem{thm}{Theorem}[section]
\newtheorem{prop}{Proposition}[section]
\newtheorem{cor}{Corollary}[section]
\newtheorem{lem}{Lemma}[section]
\newcommand{\aaa}{C_{1}}
\newcommand{\bbb}{C_{2}}
\newcommand{\ccc}{C_{3}}
\newcommand{\ddd}{C_{3}}
\newcommand{\eee}{C_{4}}
\newcommand{\fff}{C_{5}}
\newcommand{\hhh}{C_{6}}
\newcommand{\ellv}{L_{2}}
\newcommand{\ellw}{L_{3}}
\newcommand{\ellx}{L_{1}}
\newcommand{\elly}{L_{4}}
\newcommand{\epsa}{\epsilon_{1}}
\newcommand{\epsb}{\epsilon_{2}}
\newcommand{\epsc}{\epsilon_{3}}
\newcommand{\epsd}{\epsilon_{2}}
\newcommand{\epsf}{\epsilon_{2}}
\newcommand{\lsm}{\ell}
\newcommand{\Lbig}{\mathcal{L}}
\newcommand{\osm}{o}
\begin{document}

\begin{frontmatter}
\title{Stability of Join the Shortest Queue Networks}
\runtitle{Stability of Join the Shortest Queue Networks}

\begin{aug}
\author{\fnms{Maury} \snm{Bramson}\thanksref{t1}\ead[label=e1]
{bramson@math.umn.edu}}

\thankstext{t1}{Supported in part by NSF Grants DMS-0226245 and CCF-0729537}
\runauthor{Maury Bramson}

\affiliation{University of Minnesota, Twin Cities} 

\address{University of Minnesota\\
Twin Cities Campus\\
School of Mathematics\\
Institute of Technology\\
127 Vincent Hall\\
206 Church Street S.E.\\
Minneapolis, Minnesota 55455\\
\printead{e1}\\
\phantom{E-mail:\ }}

\end{aug}

\begin{abstract}
Join the shortest queue (JSQ) refers to networks whose incoming jobs are assigned to the 
shortest queue from among a randomly chosen subset of the queues in the system.  
After completion of service at the queue, a job leaves the network.  We show that, 
for all non-idling service disciplines and for general interarrival and service time 
distributions, such networks are stable when they are subcritical.  We then obtain
uniform bounds on the tails of the marginal distributions of the equilibria for families
of such networks; these bounds are employed to show relative compactness of the marginal
distributions.  We also present a family of subcritical JSQ networks whose workloads in
equilibrium are much larger than for the corresponding networks where each incoming job
is assigned randomly to a queue.  Part of this work generalizes results in Foss and Chernova  
\cite{FC}, which applied fluid limits to study networks with the FIFO discipline.  Here, 
we apply an appropriate Lyapunov function.
\end{abstract}

\begin{keyword}[class=AMS]
\kwd{60K25, 68M20, 90B15}
\end{keyword}

\begin{keyword}
\kwd{Join the shortest queue, stability.}
\end{keyword}

\end{frontmatter}

\section{Introduction}
Join the shortest queue (JSQ) refers to networks whose incoming ``jobs" (or ``customers") are assigned 
to the shortest queue from among a randomly chosen subset of queues in the system.  Here,
shortest queue means the queue with the fewest jobs.  (One could also consider the queue
with the least remaining work.)  Jobs are assumed to arrive in the network through one or
more random streams of jobs.  An arriving job is presented with a random subset B of the
queues, with probability depending only on the stream, and chooses the queue in B with the
fewest jobs; when two or more queues have the fewest jobs, one of these queues is chosen according to some rule. 
Jobs at queues are served according to some discipline, such as first-in, first-out
(FIFO), last-in, first-out (LIFO), or processor sharing (PS) and, upon completion of service, 
leave the network.

A family of such networks is given by the \emph{mean field} rule where $|B|=D$, $D\ge1$, is fixed, 
and B is chosen uniformly from among the $\binom{N}{D}$  such sets, where N is the total number of queues.
An alternative rule is given by choosing $D$ queues with replacement from among the $N$ queues, as in
Vvedenskaya, Dobrushin and Karpelevich \cite{VDK}.
A natural setting for both rules is where all jobs arrive at a single Poisson stream with rate $\alpha N$, with all 
jobs having the same service distribution $F(\cdot)$, with mean $m$, and are served according to the same
discipline at each queue.  Such networks are \emph{subcritical} when $\alpha m < 1$, that is, the long 
term rate at which jobs arrive in the network is strictly less than the total rate at which jobs are
served when no queues are empty.  When $\alpha m < 1$ and $F(\cdot)$ is exponentially distributed, it is 
elementary to show that the network is \emph{stable}, that is, the underlying Markov process is
positive Harris recurrent.  It will therefore have an equilibrium distribution.  In this setting, 
the choice of service discipline (when not relying on the residual service times) does not affect the 
distribution of the number of jobs at each queue, 
and the state space can be chosen so that it depends only on the number of jobs at each queue.  When $D=1$, we will say
that each incoming job is \emph{assigned randomly} to a queue; when, in addition, $F(\cdot)$ is exponentially
distributed, the network will consist of $N$ independent $M/M/1$ queues. 

The asymptotic behavior of the equilibria of such networks, with $D$ fixed as $N \rightarrow \infty$, has
been studied since the mid 1990s.  Let $E^{(N)}(\cdot)$ denote the equilibrium
distribution function at a single queue for the network of $N$ queues.  In \cite{VDK}, it was shown that, for $\rho = \alpha m < 1$,
\begin{equation}
\label{eq1.4.1}
\lim_{N\rightarrow \infty}\bar{E}^{(N)}(\ell) = \rho^{1 + D + \ldots + D^\ell} = \rho^{(D^{\ell + 1} -1)/(D - 1)}
\qquad \text{ for } \ell \in \mathbb{Z}_+, 
\end{equation}
where  $\bar{E}^{(N)}(\cdot) = 1 - E^{(N)}(\cdot)$, and $D > 1$ is required for the second
equality.  Hence, the tail of $\lim_{N\rightarrow\infty}E^{(N)}(\cdot)$ decreases doubly exponentially 
fast when $D > 1$; when $D=1$, the exponential tail is that of the corresponding $M/M/1$ queue.  This rapid 
decrease in the tail has different applications, such as in the design of complex networking systems where
memory is at a premium. See Azar, Broder, Karlin and Upfal \cite{ABKU}, Luczak and McDiarmid \cite{LM1,LM2},
Martin and Suhov \cite{MS}, Mitzenmacher \cite{M}, Suhov and Vvedenskaya \cite{SV}, Vocking \cite{V}, and
Vvedenskaya and Suhov \cite{VS} for related work on JSQ networks and ball-bin models in both theoretical and
applied contexts. 

The study of networks with given $N$ has been more restricted.  Foley and McDonald \cite{FM} studied 
the equilibria for small values of $N$.

Little work has been done on networks with nonexponential service distributions.  In this setting, the 
stability of subcritical networks is no longer obvious.  In particular, jobs might be assigned to 
short queues where the remaining work (or \emph{workload}) is high, which can cause service inactivity after queues with 
many jobs, but low remaining work, empty.  If the system can be ``tricked" too often in this
manner, it is conceivable that it is unstable while nevertheless being subcritical.  For general
service distributions, the evolution of the system will be influenced by the service discipline, which
complicates analysis.

For JSQ networks with general service times, Foss and Chernova \cite{FC} is the main work that analyzes stability.
Under the FIFO discipline, stability for a broad family of subcritical networks is demonstrated in 
\cite{FC}, including those with the JSQ rule, for general service distributions and for arrivals given 
by a single renewal stream.  Fluid limits are employed as the main tool.  In this more general framework,
the appropriate definition for subcritical is no longer transparent.  It will be discussed in the next
subsection.

For our results, we adopt the same basic framework as in \cite{FC} for JSQ networks, but instead
consider general service disciplines.  We also allow multiple arrival streams.  For general service disciplines,
the number of partially served jobs may be large and known fluid limit techniques cannot be applied.  Instead, we
employ an appropriate Lyapunov function.  

In this paper, we first show stability of
subcritical networks for all non-idling disciplines.  We then obtain uniform bounds on the tails of the
marginal distributions of the equilibria for families of such networks; these bounds are employed to show
relative compactness of the marginal distributions.  
Both the uniform bounds and relative compactness will be important tools for investigating, 
in the mean field setting, the limiting
behavior, as $N \rightarrow \infty$, of the equilibria distributions $E^{(N)}(\cdot)$  at single queues 
for service disciplines such as FIFO, LIFO and PS (see Bramson, Lu and Prabhakar \cite{BLP1, BLP2}).
%
%
Appropriate analogs of (\ref{eq1.4.1}), for large values of
$\ell$, will hold under certain restrictions.  We lastly present a family of subcritical mean field 
networks, with $N=D=2$, where the service discipline is chosen so that the 
corresponding equilibria have much larger workload than do the corresponding $M/G/1$ queues.
This shows that the JSQ rule does not always provide efficient service of jobs. 

\subsection*{Main results}
We present our main results here, Theorems \ref{thm1.13.1}, \ref{thm1.14.1} and \ref{thm1.17.2}, and discuss
their ramifications. In the next subsection, we will give a general outline of the paper. 

In order to avoid technical details, we postpone until Section 2 details
regarding the construction of the state space $S$ and of the Markov process $X(t)$, $t \ge 0$, 
underlying a JSQ network.  We require at this point only limited specifics about the construction,
namely that a state $x \in S$ is specified by descriptors that include the number of jobs $z_{n}$ at
each queue $n$, $n = 1,\ldots, N$; the residual interarrival times $u_{k}$, $k = 1,\ldots, K$, at each
of the arrival streams, which are given by independent renewal processes; the residual service times   
$v_{n,i}$, $n = 1, \ldots, N$ and $i = 1, \ldots, z_{n}$, for each of the jobs currently in the 
network; and the ages $\osm_{n,i}$ for each of these jobs.  (In the construction of $S$ in Section 2, normalized
versions of $u_k$, $v_{n,i}$ and $\osm_{n,i}$ are used.)  When the arrival 
streams are Poisson, or when the service time distributions are the same 
and exponentially distributed, the corresponding descriptor may be dropped.  The underlying Markov
process $X(\cdot)$ takes values in $S$ and is strong Markov.

In order to state our results, we need to specify the notion of subcriticality that was mentioned
in the context of \cite{FC}.  This requires the introduction of various terminology.  We denote by $G_{k}(\cdot)$, 
$k=1,\ldots, K$, the distribution function for the interarrival time of jobs at the $k^{\text{th}}$ 
renewal stream, and by $\alpha_k$ the reciprocal of its mean, with $\alpha_k > 0$ being assumed.
We denote by $p_{k,B}$ the probability that a job from arrival stream $k$ chooses the shortest 
queue from the set $B$, $B \subseteq B_{N}$, where $B_{N} = \{1,\ldots,N\}$.
We refer to $B$ as the \emph{selection set} and the rule corresponding to a given choice of $p_{k,B}$,
$k=1,\ldots,K$, $B\subseteq B_N$, as the \emph{selection rule}. For $n\in B$, such a job is 
a \emph{potential arrival} at $n$.

We denote by $F_{j}(\cdot)$, with $j = (k,B,n)$ for $k = 1,\ldots, K$, $B \subseteq B_{N}$ and 
$n \in B$, the distribution function for the service time (i.e., service requirement) of jobs from the $k^{\text{th}}$ 
renewal stream and selection set $B$ that are served at queue $n$; by $m_{j}$, the mean of $F_j(\cdot)$; and, by 
$\mu_{j} = 1/m_{j}$, the corresponding service rate.  As in \cite{FC}, we will require that either
(a) $F_{j}(\cdot)$ depend only on $n$ or (b) $F_{j}(\cdot)$ depend only on $k$ and $B$, in which case we
may write either $F_{n}(\cdot)$ or $F_{k,B}(\cdot)$ (and $m_{n}$ or $m_{k,B}$, respectively, $\mu_{n}$ or
$\mu_{k,B}$) when the context is clear.  (When neither (a) nor (b) holds, analysis is more complicated 
and, as explained in Section 6 of \cite{FC}, stability likely does not follow from subcriticality.)  
In \cite{BLP1}, \cite{BLP2} and \cite{VDK}, $F_{j}(\cdot) = F(\cdot)$ does not 
depend on $k$, $B$ or $N$ and there is a single renewal stream; in this setting, one can employ 
the notation $\alpha$, $m$ and $\mu$.  As in 
\cite{FC}, we refer to networks satisfying (a) as \emph{class independent} and (b) as 
\emph{station independent}.   

For the class independent case, we define the \emph{traffic intensity} 
\begin{equation}
\label{eq1.10.1}
\rho_{1} = \max_{B\subseteq B_{N}}  \left\{\left(\sum_{n\in B} \mu_n \right)^{-1}   
\sum_{k} \sum_{A\subseteq B} \alpha_k p_{k,A}\right\}
\end{equation}
and, for the station independent case,
\begin{equation}
\label{eq1.10.2}
\rho_{2} = \max_{B\subseteq B_{N}}  \left\{ |B|^{-1}   
\sum_{k} \sum_{A\subseteq B} \alpha_k p_{k,A} m_{k,A}\right\}.
\end{equation}
When $\rho_1 < 1$, respectively, $\rho_2 < 1$, we say the network is \emph{subcritical}.  
As was observed in \cite{FC}, it is not difficult to check that when $\rho_i > 1$ in either (\ref{eq1.10.1}) or
(\ref{eq1.10.2}), the corresponding network will be unstable.  This behavior does not depend on the service discipline.

When the network is both class
and station independent, (\ref{eq1.10.1}) and (\ref{eq1.10.2}) reduce to
\begin{equation}
\label{eq1.10.3}
\rho \stackrel{\text {def}}{=} \rho_1 = \rho_2 = \max_{B\subseteq B_{N}}  \left\{ \frac{m}{|B|}   
\sum_{k} \sum_{A\subseteq B} \alpha_k p_{k,A} \right\}.
\end{equation}
Let $H$ be a subgroup of the permutation group on $B_N$ on which all queues communicate (i.e., for given 
$n_1,n_2\in B_N$, $\pi(n_1) = n_2$ for some $\pi\in H$), and assume the symmetry condition
\begin{equation}
\label{eq1.11.1}
\sum_{k} \sum_{A\subseteq B_{\pi}} \alpha_k p_{k,A} = \sum_{k} \sum_{A\subseteq B} \alpha_k p_{k,A}
\qquad \text{for all } B\subseteq B_N \text{ and } \pi \in H
\end{equation}
is satisfied, where $B_\pi = \{n: n=\pi(n^{\prime}) \text{ for some } n^{\prime}\in B\}$.  (This holds, in particular, in
the mean field setting.)  Also, assume the network is both class and station independent.  Then it is not 
difficult to check that (\ref{eq1.10.3}) reduces to 
\begin{equation}
\label{eq1.11.2}
\rho = \left(\sum_{k}\alpha_{k}\right) m/N.
\end{equation}
(Note that, for given $A$, $A\subseteq B_{\pi}$ for at most $|H||B|/N$ permutations $\pi\in H$, 
with equality holding when $A$ is a singleton.)  

In addition to subcriticality, we will require the following condition on JSQ networks for Theorems \ref{thm1.13.1},
\ref{thm1.14.1} and \ref{thm1.17.2}:  for some $\Gamma \in \mathbb{Z}_{+,0}$ and $h(\cdot)$, with $h(t) > 0$ for all
$t$,
\begin{equation}
\label{eq1.11.2a}
P_{x} (\text{at most $\Gamma$ potential arrivals at $n$ occur over $(0,m^{\text{max}}t]$}) \ge h(t)
\end{equation}
for all $x$, $t$ and $n$, where $m^{\text{max}} \stackrel{\text{def}}{=} \max_{j}m_j$.  
Note that this condition depends only on the distributions $G_{k}(\cdot)$ and probabilities $p_{k,A}$.  It 
is met in most cases, for instance, if (a) for each $k$ and $n$, 
$\sum_{A \ni n} p_{k,A} < 1$, in which case one can set $\Gamma = 0$, or (b) for each $k$ and $y$, 
$G_{k}(y) < 1$, where $\Gamma=K+1$ suffices.  Condition (a) always holds in the mean field setting when $D<N$; Condition (b) 
is equivalent to (\ref{eq1.14.2}), which is required for Corollary \ref{cor1.15.2}.   

In Theorems \ref{thm1.13.1}, \ref{thm1.14.1} and \ref{thm1.17.2}, we will employ the nonnegative function $\|x\|$, or 
\emph{norm}, for $x\in S$.  It is defined in terms of the norms $\|x\|_L$, $\|x\|_R$ and $\|x\|_A$ by
\begin{equation}
\label{eq1.11.3}
\|x\| = \|x\|_L + \|x\|_R + \|x\|_A.
\end{equation}
We define these components of $\|x\|$ in Section 4.  Without going into details here, we note that $\|x\|_L$ depends on the 
number of jobs and a truncation of the residual service time of each job; $\|x\|_R$ depends on just
the residual service time of each job and will be employed for residual service times greater than the previous
truncation; and $\|x\|_A$ measures the residual interarrival times with appropriate weighting.  For given
$M > 0$, we denote by $\tau_{M}(1)$ the stopping time 
\begin{equation}
\label{eq1.12.1}
\tau_{M}(1) = \inf \{t \ge 1: \|X(t)\| \le M\}.
\end{equation}

We now state Theorem \ref{thm1.13.1}.  Here and elsewhere in the paper, we implicitly assume the discipline is
non-idling.  In Section 2, we will also specify mild conditions on how the service effort devoted to individual 
jobs is allowed to change over time.
\begin{thm}
\label{thm1.13.1}
For each subcritical JSQ network satisfying (\ref{eq1.11.2a}), there exist $M$ and $\aaa$ so that
\begin{equation}
\label{eq1.13.2}
E_{x}[\tau_{M}(1)] \le C_{1}(\|x\| \vee 1)  \qquad
\text{for all } x,
\end{equation}
where $\|x\|$ is the norm given in  (\ref{eq1.11.3}).
\end{thm}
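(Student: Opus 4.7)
The plan is to apply a Foster--Lyapunov argument with $V(x)=\|x\|$ itself serving as the Lyapunov function. Specifically, for a suitable time horizon $t_0 \ge 1$, chosen in terms of the interarrival and service distributions and of the function $h(\cdot)$ in (\ref{eq1.11.2a}), I would establish a drift inequality of the form
\begin{equation*}
E_x\bigl[\|X(t_0)\|\bigr] \le \|x\| - \epsilon \qquad \text{whenever } \|x\| \ge M_0,
\end{equation*}
together with a universal one-step bound $E_x\bigl[\|X(t)\|\bigr] \le \|x\| + C$ valid for every $x$ and $t \in [0,t_0]$. Iterating along the sub-sampled chain $(X(nt_0))_{n\ge 0}$, an optional-stopping argument then converts this into $E_x[\tau_M(1)] \le C_1 \|x\|$ for $M$ slightly larger than $M_0$, which is (\ref{eq1.13.2}).

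The drift analysis splits naturally along the three pieces of $\|x\|$. The residual-interarrival contribution $\|x\|_A$ decays at unit rate between renewals, and because the weighting in its definition is tailored to control the expectation of freshly drawn interarrival times, $E_x[\|X(t_0)\|_A]-\|x\|_A$ is bounded by a constant independent of $x$. The term $\|x\|_R$, which tracks residual service times beyond the truncation level used in $\|x\|_L$, decreases at unit rate for each job currently receiving service; condition (\ref{eq1.11.2a}) limits how many new jobs can be introduced at any one queue during $(0,t_0]$, giving $E_x[\|X(t_0)\|_R] \le \|x\|_R + C$. The heart of the argument is the $\|x\|_L$ estimate, where subcriticality is used. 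The maxima over $B$ in (\ref{eq1.10.1}) or (\ref{eq1.10.2}) assert that the total potential arrival rate into any selection set $B$ is strictly dominated by its pooled service rate; because the JSQ router places a job with selection set $A\subseteq B$ into $A$ and hence into $B$, any family of queues that stays non-empty throughout $(0,t_0]$ experiences a genuine negative drift, quantified by $1-\rho_i > 0$.

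The main obstacle is that under a general non-FIFO discipline, queues may carry many partially served jobs whose residual workloads bear little relation to the job counts seen by the JSQ router. A queue might therefore remain busy throughout $(0,t_0]$ while draining almost no workload, neutralizing both the subcriticality-driven contraction of $\|x\|_L$ and the deterministic drain of $\|x\|_R$. The truncation built into $\|x\|_L$ is precisely designed to decouple these regimes: jobs with atypically large residual work feed $\|x\|_R$ rather than $\|x\|_L$, so that $\|x\|_L$ tracks a count-like quantity that contracts under subcritical routing, while $\|x\|_R$ drains at unit rate per busy server. Condition (\ref{eq1.11.2a}) then provides, uniformly in $x$, a positive-probability event on which no queue receives more than $\Gamma$ potential arrivals in $(0,m^{\text{max}}t_0]$; on this event the non-idling assumption ensures that the long residual service times are genuinely worked off, and subcritical routing ensures that the count-based part contracts.

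The three component estimates are then assembled into the single drift inequality above by choosing $t_0$ large enough that the negative drift dominates the constant slacks from $\|x\|_R$ and $\|x\|_A$, and choosing $M_0$ so large that at least one of $\|x\|_L$, $\|x\|_R$ is necessarily responsible for the mass. The universal one-step bound $E_x[\|X(t)\|]\le\|x\|+C$ is obtained separately, using that arrivals in any bounded interval bring only finite expected work and finite expected count regardless of state. With both bounds in hand, (\ref{eq1.13.2}) follows by the standard discrete-time Foster argument applied to the chain $(X(nt_0))$.
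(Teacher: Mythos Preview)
Your overall shape—Lyapunov function $\|x\|$, three-piece decomposition, condition (\ref{eq1.11.2a}) invoked to force service of a large-residual job—matches the paper's, but the fixed-horizon Foster step has a genuine gap in the $\|x\|_R$ regime. Suppose $\|x\|_L,\|x\|_A$ are bounded and $\|x\|_R$ is enormous. You argue that on the event from (\ref{eq1.11.2a}) at most $\Gamma$ arrivals hit the relevant queue $n$, after which non-idling forces the large-residual job to be worked off. But $\|x\|_L\le M_L$ bounds only the \emph{count} $z_n$, not the individual $w_{n,i}$: several jobs at $n$ may themselves have huge residuals, and a discipline that postpones the largest of them may give it no service at all over any prescribed $t_0$. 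The decrease in $\|X(\cdot)\|_R$ then comes only from $\psi_W'$ evaluated at the smaller residuals, and there is no uniform lower bound for this that beats the constant slacks; the inequality $E_x[\|X(t_0)\|]\le\|x\|-\epsilon$ simply fails for such $x$. (Relatedly, your description of $\|x\|_R$ as decreasing ``at unit rate per served job'' is incorrect: by (\ref{eq4.6.1}) the rate is $\stackrel{\circ}{\mu}_n\psi_W'(w_{n,i})r_{n,i}$, and the whole point of $\psi_W$ is that this is large only when $w_{n,i}$ is.)

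The paper abandons the fixed horizon and uses a \emph{state-dependent} stopping time $\sigma$, defined differently according to which of $\|x\|_L,\|x\|_A,\|x\|_R$ is large (Proposition~\ref{prop4.22.4}, cases (a)--(c)). In case (c) the key device is a rapidly growing sequence $w(1),w(2),\ldots$ built in (\ref{eq4.30.0})--(\ref{eq4.30.2}) together with Lemma~\ref{lem4.31.2}: these identify an index $i_x$ such that all jobs ranked below $i_x$ at $n_x$ can be cleared within time $t_x=m^{\text{max}}(t(i_x-1)+1)$, while the $i_x$-th job has $w'_{n_x,i_x}>w(i_x)$ with $\psi_W'(w(i_x)-1)$ chosen via (\ref{eq4.30.1}) so large that even after multiplying by the small probability $2^{-\Gamma}p(i_x-1)$ from (\ref{eq1.11.2a}) the expected drop in $\|\cdot\|_R$ still dominates $t_x$. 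This yields $E_x[\sigma]\le C_3(\|x\|-E_x[\|X(\sigma)\|])$ uniformly over $\|x\|>M$; iterating these variable-length steps via the strong Markov property gives $E_x[\tau_M]\le C_3\|x\|$, and Corollary~\ref{cor4.18.1} then handles the passage to $\tau_M(1)$.
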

For certain service disciplines, such as FIFO and PS, the condition (\ref{eq1.11.2a}) can be avoided; this is noted
after the proof of Proposition \ref{prop4.22.4}.  As mentioned above, (\ref{eq1.11.2a}) will 
in most cases be satisfied
irrespective of the service discipline.

The condition (\ref{eq1.13.2}) will imply the positive Harris recurrence of $X(\cdot)$ provided that the states in
the state space $S$ communicate with one another in an appropriate sense.  Petite sets are typically employed for 
this purpose; they will be defined in Section 2.  A petite set $A$ has the property that each measurable set $B$
is ``equally accessible" from all points in $A$ with respect to a given nontrivial measure.
\begin{thm}
\label{thm1.14.1}
Suppose that a JSQ network is subcritical, satisfies (\ref{eq1.11.2a}), and that $A_{M}= \{x: \|x\| \leq M\}$ is
petite for each $M > 0$ for the norm in (\ref{eq1.11.3}).  Then $X(\cdot)$ is positive Harris recurrent.
\end{thm}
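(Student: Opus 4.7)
The plan is to invoke a standard Foster--Lyapunov style criterion for positive Harris recurrence of strong Markov processes, with Theorem \ref{thm1.13.1} playing the role of the drift bound and the petiteness hypothesis on $A_M = \{x: \|x\| \le M\}$ supplying the small-set condition. The classical criterion asserts that if there is a petite set $A$ and a nonnegative measurable function $V$ such that $E_x[\tau_A] \le V(x)$ for every $x$, with $\sup_{x\in A} E_x[\tau_A] < \infty$ and $\tau_A$ bounded away from $0$, then the process is positive Harris recurrent and admits a unique invariant probability measure.

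First I would fix $M$ and $\aaa$ as produced by Theorem \ref{thm1.13.1} and take $A = A_M$, $V(x) = \aaa(\|x\| \vee 1)$. The stopping time $\tau_M(1)$ from (\ref{eq1.12.1}) is delayed by $1$ and so serves as a genuine positive return time to $A_M$. Theorem \ref{thm1.13.1} directly yields $E_x[\tau_M(1)] \le V(x) < \infty$ for every $x \in S$, so $X(\cdot)$ hits $A_M$ in finite expected time from every starting state; in particular, for $x \in A_M$ one obtains the uniform bound $E_x[\tau_M(1)] \le \aaa(M \vee 1)$. Combined with the hypothesis that $A_M$ is petite, the standard recurrence criterion then yields positive Harris recurrence of $X(\cdot)$.

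The only nontrivial bookkeeping lies in matching the continuous-time formulation of $X(\cdot)$ constructed in Section 2 to a Meyn--Tweedie-type recurrence theorem, which is most cleanly stated either for a discrete skeleton chain or for right processes. This step is routine once one notes that the delay $t \ge 1$ in (\ref{eq1.12.1}) prevents degeneracy of the return time, and that petiteness of $A_M$ transfers to any skeleton of interest. I expect no additional obstacle beyond this translation: the entire substantive content of the theorem is already carried by the drift bound in Theorem \ref{thm1.13.1}, and the present result is essentially a ``wrapper'' that combines that bound with the assumed petiteness to invoke off-the-shelf Markov process recurrence theory.
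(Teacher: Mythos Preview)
Your proposal is correct and follows essentially the same route as the paper: apply Theorem \ref{thm1.13.1} to obtain a uniform bound on $E_x[\tau_M(1)]$, then feed this together with the petiteness hypothesis into the Meyn--Tweedie criterion (stated in the paper as Theorem \ref{thm2.7.1}) with $A=A_M$ and $\delta=1$. The one detail you gloss over that the paper makes explicit is that Theorem \ref{thm2.7.1} requires $A_M$ to be \emph{closed}, which the paper verifies via Lemma \ref{lem4.9.1} (continuity of $\|\cdot\|$ in the metric $d$); this is routine, but worth naming.
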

Theorem \ref{thm1.14.1} will follow from Theorem \ref{thm1.13.1} by standard reasoning.  More detail is provided in
Section 2.

A standard criterion that ensures the above sets $A_M$ are petite is given by the following two conditions on the
interarrival times.  In various works on stability (e.g., \cite{B0}, \cite{DAI} and \cite{FC}), these conditions are employed
rather than the more abstract notion of petite set.  The first condition is that the distribution $G_{k}(\cdot)$
is unbounded for each $k$, that is,
\begin{equation}
\label{eq1.14.2}
\bar{G}_{k}(y) \stackrel{\text{def}}{=} 1 - G_{k}(y) > 0 \qquad \text{for all } y.
\end{equation}
The second condition is that, for some $\ell_{k} \in \mathbb{Z}_+$, the $\ell_k$-fold convolution 
$G_{k}^{*\ell_k} (\cdot)$ of 
$G_{k}(\cdot)$ and Lebesque measure are not mutually singular.  That is, for some nonnegative $q_{k}(\cdot)$ with
$\int_{0}^{\infty} q_{k}(s)ds > 0$,
\begin{equation}
\label{eq1.15.1}
G_{k}^{*\ell_k} (d) - G_{k}^{*\ell_k} (c) \ge \int_{c}^{d} q_{k}(s)ds
\end{equation}
for all $c < d$.  When the interarrival times are exponentially distributed, both (\ref{eq1.14.2}) and 
(\ref{eq1.15.1}) are immediate.  More detail is given in Section 2.

We therefore have the following corollary of Theorem \ref{thm1.14.1}.  As noted earlier, (\ref{eq1.11.2a}) is automatic
in this setting.  
\begin{cor}
\label{cor1.15.2}
Suppose that a subcritical JSQ network has interarrival times that satisfy (\ref{eq1.14.2})
and (\ref{eq1.15.1}).  Then $X(\cdot)$ is positive Harris recurrent.
\end{cor}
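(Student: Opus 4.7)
The plan is to deduce the corollary directly from Theorem \ref{thm1.14.1}, so I only need to verify two of its hypotheses: (\ref{eq1.11.2a}) and the petiteness of each sublevel set $A_M = \{x : \|x\| \le M\}$ for the norm (\ref{eq1.11.3}). Subcriticality is part of the hypothesis.

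For (\ref{eq1.11.2a}), under (\ref{eq1.14.2}) one has $\bar G_k(s) > 0$ for every $s$, and the mutual independence of the $K$ renewal clocks gives
\begin{equation*}
P_x(\text{no renewal from any stream occurs in } (0, m^{\text{max}} t]) \ge \prod_{k=1}^{K} \bar G_k(m^{\text{max}} t) > 0.
\end{equation*}
On that event, the only potential arrivals at a given queue $n$ during the window come from the (at most $K$) interarrival clocks already ticking at time $0$, so (\ref{eq1.11.2a}) holds with $\Gamma = K+1$ and $h(t) = \prod_{k} \bar G_k(m^{\text{max}} t)$. For the petiteness of $A_M$, I would follow the standard approach used in \cite{B0}, \cite{DAI} and \cite{FC}: fix a deterministic sampling time $t^*$ depending only on $M$, and exhibit an event $\Omega_{t^*}$ whose probability is bounded below uniformly in $x \in A_M$ and on which (i) all jobs present at time $0$ (whose number and residual service requirements are controlled by $\|x\|_L$ and $\|x\|_R$) complete service by time $t^*$; (ii) the finitely many new jobs arriving in $[0, t^*]$ are also served to completion; and (iii) for each $k$, at least $\ell_k$ renewals of stream $k$ occur in $[0, t^*]$. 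Under $\Omega_{t^*}$, the queue coordinates of $X(t^*)$ take the fixed ``empty'' value, while (iii) combined with (\ref{eq1.15.1}) gives the joint distribution of the residual interarrival times at time $t^*$ an absolutely continuous component proportional to $\prod_{k} q_k(\cdot)$. Hence the law of $X(t^*)$ dominates a fixed nontrivial measure $\nu$ independent of $x \in A_M$, which is exactly the minorization defining petiteness of $A_M$ with $a = \delta_{t^*}$.

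The main obstacle is the uniformity of $P_x(\Omega_{t^*})$ over $x \in A_M$, which requires separate treatment of jobs with moderate and long residual service times: the former are controlled directly through $\|x\|_L$, while the latter are controlled only in an integrated sense through $\|x\|_R$. The remedy is to enlarge $t^*$ enough to absorb the long residuals and then to use (\ref{eq1.14.2}) to keep the probability of ``no premature renewal disruption'' during the enlarged drainage window bounded below. Once $A_M$ is shown to be petite, Theorem \ref{thm1.14.1} applies and the positive Harris recurrence of $X(\cdot)$ follows.
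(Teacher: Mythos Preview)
Your proposal is correct and follows the same route as the paper: (\ref{eq1.14.2}) is precisely condition~(b) stated after (\ref{eq1.11.2a}), giving $\Gamma=K+1$, and petiteness of $A_M$ under (\ref{eq1.14.2})--(\ref{eq1.15.1}) is handled by the standard empty-and-regenerate argument (the paper simply defers to Proposition~4.7 of \cite{B1}). The only simplification the paper makes relative to your sketch is to observe that $\|x\|\le M$ already bounds the total work in the network by a linear function of $M$ (since $\psi_W$ is superlinear and $\|x\|_L$ controls the job count), so the separate moderate/long-residual treatment you flag as the main obstacle is not actually needed.
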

By employing Theorem \ref{thm1.14.1} and the bounds obtained in the derivation of Theorem \ref{thm1.13.1}, one
can obtain uniform bounds on the equilibria restricted to individual queues and to individual arrival streams 
for families of JSQ networks.  
Such a family $\mathcal{A}$ will be required to satisfy the following uniformity conditions on the service and interarrival
distributions $F_{j}^{(a)}(\cdot)$ and $G_{k}^{(a)}(\cdot)$,  for $a\in \mathcal{A}$:  
\begin{equation}
\label{eq1.16.2new}
\sup_{a\in \mathcal A} \, \max_j \, \mu_{j}^{(a)}  \int_{M/\mu_{j}^{(a)}}^{\infty} yF_{j}^{(a)}(dy) \rightarrow 0
\qquad \text{as } M \rightarrow \infty
\end{equation}
and
\begin{equation}
\label{eq1.16.1new}
\sup_{a\in \mathcal A} \, \max_k \, \alpha_{k}^{(a)} \int_{M/\alpha_{k}^{(a)}}^{\infty} yG_{k}^{(a)}(dy) \rightarrow 0 
\qquad \text{as } M \rightarrow \infty.
\end{equation}
%
%
%
We require 
\begin{equation}
\label{eq1.16.3}
\sup_{a\in \mathcal A} \rho_{i}^{(a)} < 1, 
\end{equation}
where $\rho_{i}^{(a)} = \rho_{1}^{(a)}$ or $\rho_{i}^{(a)} = \rho_{2}^{(a)}$, depending on whether
the network is class independent or station independent.  Setting
\begin{equation*}
\left(\stackrel{\circ}{m}\!\!\mbox{}^{\text{ratio}}\right)^{(a)} = \begin{cases}
1 \quad  & \text{for class independent networks}, \\
\frac{\max_{k,B} m_{k,B}^{(a)}}{\min_{k,B} m_{k,B}^{(a)}}  \quad & \text{for station independent networks},  
\end{cases}
\end{equation*}
we also require that
\begin{equation}
\label{eq1.16.3b}
\stackrel{\circ}{m}\!\!\mbox{}^{\text{ratio}} \stackrel{\text{def}}{=} 
\sup_{a\in \mathcal{A}} \left(\stackrel{\circ}{m}\!\!\mbox{}^{\text{ratio}}\right)^{(a)} < \infty.
\end{equation}

We also need a uniform version of (\ref{eq1.11.2a}), namely that, for some 
$\Gamma \in \mathbb{Z}_{+,0}$ and $h(\cdot)$, with $h(t) > 0$ for all $t$,
\begin{equation}
\label{eqN7.1}
P_{x}^{(a)} (\text{at most $\Gamma$ potential arrivals at $n$ occur over $(0,(m^{\text{max}})^{(a)}t]$}) \ge h(t)
\end{equation}
for all $a\in \mathcal{A}$, $x$, $t$ and $n$.  Here, $P_{x}^{(a)}(\cdot)$ denotes the transition kernel with 
respect to $a$.  When the selection rules $p_{k,B}^{(a)}$ satisfy $p_{k,B}^{(a)} = 0$ for $|B| \ge M$ for some
$M$ not depending on $a$, we will say the selection rules have \emph{uniformly bounded support}.  

In many cases of interest, conditions (\ref{eq1.16.2new})--(\ref{eqN7.1}) are not difficult to check. 
For instance, when each network has a single Poisson arrival stream and $F_{j}^{(a)}(\cdot)$ does not depend
on $j$ or $a$, all conditions except for (\ref{eq1.16.3}) and (\ref{eqN7.1}) are automatic.  When, in addition,
the selection rules have uniformly bounded support, then (\ref{eqN7.1}) also holds.  

By Theorem \ref{thm1.14.1}, for any subcritical JSQ network $a\in \mathcal{A}$, with $A_M$ petite for $M>0$, 
the underlying Markov process $X^{(a)}(\cdot)$ is positive Harris recurrent; this follows by employing
the analog of (\ref{eq1.11.3}),
\begin{equation}
\label{eq1.16.4}
\|x\|^{(a)} = \|x\|_L^{(a)} + \|x\|_R^{(a)} + \|x\|_A^{(a)}.
\end{equation}
In the statement of Theorem \ref{thm1.17.2}, we also employ the \emph{local norm} at $n=1,\ldots,N^{(a)}$,
\begin{equation}
\label{eq1.16.5}
|x|_{n}^{(a)} = z_n + \lsm_{n}^{(a)} + w_{n}^{(a)} \qquad \text{for } x\in S^{(a)}.
\end{equation}
Here, $z_n$ is the number of jobs at queue $n$ and $w_{n}^{(a)}$ is the \emph{weighted workload} at $n$, that is,
\begin{equation}
\label{eq1.16.6}
w_{n}^{(a)} = \sum_{i=1}^{z_n} w_{n,i}^{(a)} = \sum_{i=1}^{z_n} \mu_{j_{n,i}}^{(a)} v_{n,i},
\end{equation}
where $\mu_{j_{n,i}}^{(a)}$ denotes the service rate of the $i^{\text{th}}$ 
job at queue $n$ and $v_{n,i}$ is the residual
service time of this job.  The term $\lsm_{n}^{(a)}$ is the \emph{maximum weighted age} at $n$, that is,
\begin{equation}
\label{eqN7.2}
\lsm_{n}^{(a)} = \max_{i=1,\ldots,z{_n}} \lsm_{n,i}^{(a)} = \max_{i=1,\ldots,z{_n}} \mu_{j_{n,i}}^{(a)} \osm_{n,i},
\end{equation}
where $\osm_{n,i}$ is the age of the $i^{\text{th}}$
job at queue $n$ (the time since its arrival at $n$). We set 
\begin{equation}
\label{eq1.16.7}
s_{k}^{(a)} = \alpha_{k}^{(a)} u_k,
\end{equation}
where $u_k$ is the residual interarrival time at the arrival stream $k$; we refer to $s_{k}^{(a)}$ as 
the \emph{weighted interarrival time}.  (Since $x$ and $z$ do not explicitly depend on $a\in \mathcal{A}$,
the corresponding superscripts are omitted.)  We will employ, in Section 4, some of the terminology and conditions introduced in
(\ref{eq1.16.2new})--(\ref{eq1.16.7}), when demonstrating Theorem \ref{thm1.13.1} for a given
JSQ network.

Since $X^{(a)}(\cdot)$ is positive Harris recurrent, it will have a unique equilibrium measure on $S^{(a)}$, which we
denote by $\mathcal{E}^{(a)}$.  In order to ensure that the marginal distribution at a given queue $n$ does
not depend on $n$, we also assume that, for some subgroup $H^{(a)}$ of the permutation group on $B_{N^{(a)}+K^{(a)}}$, with
queues being mapped to queues, arrival streams to arrival streams and on which all queues (but not necessarily
all arrival streams) communicate,
\begin{equation}
\label{eq1.17.1}
X_{\pi}^{(a)}(\cdot) \text{ and } X^{(a)}(\cdot) 
\text{ are stochastically equivalent for all } \pi \in H^{(a)}. 
\end{equation}
(That is, $X_{\pi}^{(a)}(\cdot)$  and  $X^{(a)}(\cdot)$ have the same joint distributions.) 
Here, $X_{\pi}^{(a)}(\cdot)$ is the stochastic process induced from $X^{(a)}(\cdot)$ by permuting the queues and 
arrival streams
according to $\pi$.  We will call such a JSQ network \emph{symmetric}.  As an example of such a JSQ network, one
can consider $N$ queues arranged uniformly along a circle, along with $N$ arrival streams that alternate with the queues, with
jobs arriving at a given stream by selecting the shortest queue within a preassigned distance of the stream, and
the service discipline and service and interarrival distributions not depending on the queue, arrival stream or
selection set.

Employing the preceding conditions, we now state our third main result.  Here and later on, $\mathcal{E}^{(a)}(\cdot)$ denotes
the probability of the indicated event with respect to the measure $\mathcal{E}^{(a)}$, and $X$ and $S^{(a)}$
denote the random variables corresponding to $x$ and $s^{(a)}$.  (The random variables 
$S^{(a)} = (S_{1}^{(a)},\ldots,S_{N}^{(a)})$ should not be confused with the state space.)
\begin{thm}
\label{thm1.17.2}
Suppose that a family $\mathcal{A}$ of JSQ networks satisfies the uniformity conditions  
(\ref{eq1.16.2new})--(\ref{eqN7.1}) and (\ref{eq1.17.1}), and that, for each network $a\in{\mathcal{A}}$,
$A_{M} = \{x: \|x\|^{(a)} \le M\}$ is petite for each $M>0$ with respect to the norms in (\ref{eq1.16.4}).  Then
\begin{equation}
\label{eq1.18.1}
\sup_{a\in \mathcal{A}} \mathcal{E}^{(a)} (|X|_{n}^{(a)} > M) \rightarrow{0} \qquad \text{as } 
M \rightarrow{\infty},
\end{equation}
for each n, and
\begin{equation}
\label{eq1.18.2}
\sup_{a\in \mathcal{A}} \, \max_{k} \mathcal{E}^{(a)} (S_{k}^{(a)} > M) \rightarrow{0} \qquad \text{as } 
M \rightarrow{\infty}.
\end{equation}
\end{thm}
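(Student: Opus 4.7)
The strategy is to combine Theorem \ref{thm1.14.1}, which furnishes a unique equilibrium $\mathcal{E}^{(a)}$ on $S^{(a)}$, with a uniform-in-$a$ sharpening of the Lyapunov estimate that underlies Theorem \ref{thm1.13.1}, so as to obtain a bound of the form
\begin{equation*}
\sup_{a\in\mathcal{A}}\mathcal{E}^{(a)}\bigl[\|X\|^{(a)}\bigr] < \infty.
\end{equation*}
Once this is in hand, both (\ref{eq1.18.1}) and (\ref{eq1.18.2}) will follow from Markov's inequality, provided one also verifies that both the local norm $|x|_{n}^{(a)}$ and the weighted residual interarrival time $s_{k}^{(a)}$ are dominated, up to a constant independent of $a$, by $\|x\|^{(a)}$ itself.

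First, I would revisit the proof of Theorem \ref{thm1.13.1}. The hitting-time bound (\ref{eq1.13.2}) on $A_{M}$ is derived from an underlying drift inequality, and a standard upgrade using a quadratic-type Lyapunov function $V$ comparable to $(\|x\|^{(a)})^{2}$ converts this into a Foster--Lyapunov condition of the form
\begin{equation*}
E_{x}[V(X(T))] - V(x) \le -\epsa\,\|x\|^{(a)} + \mmm\,\mathbf{1}_{A_{M}}(x)
\end{equation*}
for a deterministic horizon $T$. Integrating this against $\mathcal{E}^{(a)}$ and using its invariance under the time-$T$ semigroup yields the desired bound $\mathcal{E}^{(a)}[\|X\|^{(a)}] \le \mmm/\epsa$. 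The crux of the theorem then lies in verifying that $\epsa$, $\mmm$, $T$ and $M$ depend on $a$ only through the subcriticality gap $1 - \sup_{a}\rho_{i}^{(a)}$, the uniform tail conditions (\ref{eq1.16.2new})--(\ref{eq1.16.1new}), the mean-ratio bound (\ref{eq1.16.3b}), and the function $h(\cdot)$ and constant $\Gamma$ from (\ref{eqN7.1}); I expect this to be extractable by re-examining each constant that appears in the proof of Theorem \ref{thm1.13.1} and replacing any network-specific estimate by the uniform one guaranteed by the hypotheses.

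Second, I would establish the pointwise comparison $|x|_{n}^{(a)} + s_{k}^{(a)} \le \bbb\,\|x\|^{(a)}$ with $\bbb$ independent of $a$. From the description of the norm components given before (\ref{eq1.11.3}), $\|x\|_{L}^{(a)}$ controls $z_{n}$ together with a truncation of each $w_{n,i}^{(a)}$; $\|x\|_{R}^{(a)}$ controls the residual weighted service time above that truncation; and $\|x\|_{A}^{(a)}$ controls each $s_{k}^{(a)}$ directly. The only delicate component of $|x|_{n}^{(a)}$ is the maximum weighted age $\lsm_{n}^{(a)}$ defined in (\ref{eqN7.2}), which I expect to bound by combining the per-stream contributions collected in $\|x\|_{A}^{(a)}$ with the mean-ratio bound (\ref{eq1.16.3b}), preventing the weights $\mu_{j_{n,i}}^{(a)}$ from producing disproportionately large age contributions. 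Once this domination is in place, Markov's inequality delivers $\mathcal{E}^{(a)}(|X|_{n}^{(a)} > M) \le \bbb\,\mathcal{E}^{(a)}[\|X\|^{(a)}]/M$ and the analogous estimate for $S_{k}^{(a)}$. The main obstacle I anticipate is the bookkeeping in the first step: the proof of Theorem \ref{thm1.13.1} likely absorbs multiplicative factors depending on the particular service and interarrival distributions into its constants, and teasing out that each such factor is controlled by the uniform hypotheses (\ref{eq1.16.2new})--(\ref{eqN7.1}), rather than by distribution-dependent quantities such as higher moments, is the step that demands most care.
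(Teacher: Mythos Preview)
Your Step~2 breaks down: the pointwise domination $|x|_{n}^{(a)}\le \bbb\,\|x\|^{(a)}$ simply fails for the age component. As the paper notes explicitly after the definition of $\|\cdot\|_A$, the weighted ages $\lsm_{n,i}$ do not enter $\|\cdot\|$ at all; a state can have a single very old job with tiny residual service time and arbitrarily small $\|x\|^{(a)}$. Your suggestion that $\|x\|_{A}^{(a)}$ controls $\lsm_{n}^{(a)}$ is mistaken: $\|x\|_{A}^{(a)}$ depends only on the residual interarrival times $s_k$ and (through $\psi_Z$) the queue lengths $z_n$, not on ages. A similar obstruction arises for $s_k^{(a)}$: since $\psi_A$ vanishes at $M_1$, one can have $s_k$ near $M_1$ with $\|x\|_{A,k}$ arbitrarily small, so no linear pointwise bound $s_k\le C\|x\|^{(a)}$ holds either.

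The paper's route is genuinely different from a first-moment-plus-Markov argument. For $S_k^{(a)}$ it bypasses the Lyapunov function entirely: the marginal of $\mathcal{E}^{(a)}$ in $s_k$ is just the stationary age distribution $\alpha_k y\,G_k^{(a)}(dy)$ of a renewal process, and (\ref{eq1.16.1new}) gives the uniform tail directly. For $Z_n$ and $W_n^{(a)}$ the paper does \emph{not} establish $\sup_a \mathcal{E}^{(a)}[\|X\|^{(a)}]<\infty$; indeed it explicitly truncates the norm at a level $\bar{M}$ precisely because this is not known. Instead it exploits that $\ellx$ (and hence $\ellv,\ellw$) is a free parameter indexing a \emph{family} of norms: for each $\ellx$ one equates $E_{\mathcal{E}^{(a)}}[\|X(0)\|^{(a,\ell)}]=E_{\mathcal{E}^{(a)}}[\|X(t)\|^{(a,\ell)}]$ by stationarity, and then the drift estimates from Propositions~\ref{prop4.10.2} and~\ref{prop4.13.1} force $\gamma_z^{(a)}(\ellw)\le 12/\ellx$ and $\tilde{\gamma}_w^{(a)}(\ellw)\le 1/\ellw$, with constants uniform in $a$ by (\ref{eq1.16.2new})--(\ref{eqN7.1}). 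Letting $\ellx\to\infty$ yields the uniform tails. Finally, the age bound is obtained \emph{afterward}, by a separate dynamical argument: once $W_n^{(a)}$ is uniformly tight, (\ref{eqN7.1}) guarantees that the queue empties with positive probability on each of many disjoint intervals, which caps the age.
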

The limits (\ref{eq1.18.1}) and (\ref{eq1.18.2}) supply uniform bounds on the number
of jobs, maximum weighted age and weighted workload at each individual queue $n$, 
and on the weighted interarrival times at each arrival
stream $k$.  When these limits hold, we say the equilibria $\mathcal{E}^{(a)}$, for $a\in \mathcal{A}$,
are \emph{locally bounded}.  Note that, on account of (\ref{eq1.17.1}), the probabilities in (\ref{eq1.18.1})
do not depend on $n$.

In many cases of interest, the conditions in Theorem \ref{thm1.17.2} are not difficult to check.  As mentioned in 
conjunction with (\ref{eq1.16.2new})--(\ref{eqN7.1}), when each member of $\mathcal{A}$
has a single Poisson arrival stream, $F_{j}^{(a)}(\cdot)$ does not depend on $j$ or $a$, and the selection rules
have uniformly bounded support, then all of these 
properties except (\ref{eq1.16.3}) hold.  
Since (\ref{eq1.14.2}) and (\ref{eq1.15.1}) hold for Poisson arrivals, the sets $A_M$ are petite.
Also, under (\ref{eq1.17.1}), the traffic intensity can be
written as in (\ref{eq1.11.2}).  We therefore obtain the following corollary of Theorem \ref{thm1.17.2}.  Recall that
a selection rule is mean field if, for a given $D$, each nonrepeating $D$-tuple is chosen with equal 
probability from among the $N^{(a)}$ queues, and note that JSQ networks with a mean field selection rule and for 
which $F_{j}^{(a)}$ does not depend on $j$ or $a$ are also symmetric. 

\begin{cor}
\label{corN8.1}
Suppose that each member of a family $\mathcal{A}$ of JSQ networks has a single Poisson arrival stream, that
$F_{j}^{(a)}$ does not depend on $j$ or $a$, that the selection rules have uniformly bounded support, and that 
\begin{equation}
\label{eqN8.2}
\sup_{a\in \mathcal{A}} \alpha^{(a)}m/N^{(a)} < 1.
\end{equation}
If (\ref{eq1.17.1}) is satisfied, then (\ref{eq1.18.1}) and (\ref{eq1.18.2}) hold.  In particular, if the selection
rules are mean field, then (\ref{eq1.18.1}) and (\ref{eq1.18.2}) hold.
\end{cor}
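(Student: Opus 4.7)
The approach is to verify each hypothesis of Theorem~\ref{thm1.17.2} for the family $\mathcal{A}$, so that (\ref{eq1.18.1}) and (\ref{eq1.18.2}) follow as an immediate corollary. Several hypotheses are automatic under the stated assumptions: since $F_{j}^{(a)}\equiv F$ does not depend on $j$ or $a$, every network is simultaneously class and station independent, and the moment ratio in (\ref{eq1.16.3b}) equals $1$; the tail condition (\ref{eq1.16.2new}) collapses to $\mu\int_{M/\mu}^{\infty} yF(dy)\to 0$, which holds because $F$ has finite mean; and (\ref{eq1.16.1new}), after the substitution $u=\alpha^{(a)}y$, reduces to $\int_{M}^{\infty} u e^{-u}\,du$, independent of $a$. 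For the petite-set hypothesis, exponential interarrival distributions trivially fulfill (\ref{eq1.14.2}) and (\ref{eq1.15.1}), so the standard argument following (\ref{eq1.15.1}) makes $A_{M}$ petite for every $M>0$.

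The traffic-intensity bound (\ref{eq1.16.3}) is where (\ref{eq1.17.1}) enters. Stochastic equivalence under the permutation group $H^{(a)}$ implies the marginal symmetry (\ref{eq1.11.1}); combined with class and station independence, the reduction (\ref{eq1.11.2}) gives $\rho^{(a)} = \alpha^{(a)}m/N^{(a)}$, and (\ref{eqN8.2}) immediately supplies the uniform bound $\sup_{a}\rho^{(a)} < 1$.

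The main point requiring actual work is the uniform potential-arrival condition (\ref{eqN7.1}). Let $M_{0}$ be the uniform bound on $|B|$ afforded by the bounded-support hypothesis, and let $r^{(a)}_{n} = \alpha^{(a)}\sum_{B\ni n}p_{B}^{(a)}$ denote the rate of the Poisson process of potential arrivals at queue $n$. By (\ref{eq1.17.1}), $r^{(a)}_{n}$ does not depend on $n$; summing over $n$ yields
\begin{equation*}
N^{(a)} r^{(a)} \;=\; \alpha^{(a)}\sum_{B}|B|\,p_{B}^{(a)} \;\le\; M_{0}\,\alpha^{(a)},
\end{equation*}
so $r^{(a)} \le M_{0}\alpha^{(a)}/N^{(a)} \le M_{0}/m$ uniformly in $a$ by (\ref{eqN8.2}). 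Hence the number of potential arrivals at $n$ in $(0,mt]$ is Poisson with parameter at most $M_{0}t$, and (\ref{eqN7.1}) holds with $\Gamma = 0$ and $h(t) = e^{-M_{0}t}$.

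For the mean field specialization, the uniform $D$-tuple rule is supported on sets with $|B| = D$, so the bounded-support hypothesis is met; moreover, the full symmetric group on $B_{N^{(a)}}$ leaves the dynamics invariant, which furnishes $H^{(a)}$ and yields (\ref{eq1.17.1}). Thus mean field networks fall within the first part of the corollary, and (\ref{eq1.18.1}) and (\ref{eq1.18.2}) follow. The only step that required genuine argument is the per-queue rate estimate feeding into (\ref{eqN7.1}); the remaining verifications are essentially bookkeeping atop Theorem~\ref{thm1.17.2}.
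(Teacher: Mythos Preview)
Your proof is correct and follows the same approach as the paper: verify each hypothesis of Theorem~\ref{thm1.17.2} and then invoke it. The paper's own justification is the discussion in the paragraph immediately preceding the corollary, which lists the same verifications but only asserts (\ref{eqN7.1}) without spelling out the bound; your derivation of the uniform per-queue potential-arrival rate via $N^{(a)}r^{(a)}=\alpha^{(a)}\sum_{B}|B|\,p_{B}^{(a)}\le M_{0}\alpha^{(a)}$ together with (\ref{eqN8.2}) makes that step explicit and is a genuine addition of detail.
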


%
As mentioned earlier, when a JSQ network has a single Poisson arrival stream, one can omit the interarrival times from
the state space descriptor.  In this case, the limit (\ref{eq1.18.2}) is no longer relevant in Theorem \ref{thm1.17.2}
and Corollary \ref{corN8.1}. 

When $\mathcal{A}$ is given by a family of networks indexed by the number $N$ of queues, Theorem \ref{thm1.17.2} 
provides local bounds on $\mathcal{E}^{(N)}$ as $N\rightarrow{\infty}$.  These bounds can be used to show the relative
compactness of the restriction of $\mathcal{E}^{(N)}$ to finite sets of queues; this is done in Theorem \ref{thm5.22.1}.
%
%
As mentioned earlier, these local bounds and relative compactness of the sequence provide a framework for approximating  
the corresponding marginal distributions for large $N$ (\cite{BLP1, BLP2}).  In this context, 
one employs appropriate mean field equations corresponding to the marginal distributions of the 
equilibrium $\mathcal{E}^{(\infty)}$ of a limiting infinite system.  Under appropriate assumptions on the service times, 
the solutions of these mean field equations satisfy bounds on
the number of jobs that are on the same order as in the limit in (\ref{eq1.4.1}). 
%

In Section 7, we present a family of mean field, symmetric networks, with a single Poisson arrival stream, $N=D=2$, 
and an appropriate service discipline that illustrates how the JSQ rule can produce equilibria for which the
typical workload is incredibly large, much larger than the workload for the analogous network with $D=1$.
So, in terms of workload, the JSQ rule can sometimes yield poor results. 

\subsection*{Outline of the paper and main ideas}
In Section 2, we will provide a brief background on Markov processes that will be relevant to the space $S$ and 
Markov process $X(\cdot)$ employed in the introduction, and we will provide a more detailed construction of $S$ and
$X(\cdot)$.  We will also provide the background that is needed to derive Theorem \ref{thm1.14.1} from Theorem
\ref{thm1.13.1} and to obtain Corollary \ref{cor1.15.2}.  The machinery for this is standard in the context of
queueing networks and is easily modified so as to apply to JSQ networks.

In Section 3, we provide an alternative formulation of the traffic intensities in (\ref{eq1.10.1}) and
(\ref{eq1.10.2}) that we employ in successive sections.  This formulation will
enable us to compare JSQ networks to networks with appropriate random assignment of jobs to queues.  

Theorem \ref{thm1.13.1} is demonstrated in Sections 4 and 5.  The main tool is an appropriate Lyapunov function that is
given in terms of the norms $\|x\|_L$, $\|x\|_R$ and $\|x\|_A$ in (\ref{eq1.11.3}).  Our analysis involves 
decomposing time into random intervals over which no jobs enter the network.  Over each such interval, the evolution
of the system is deterministic and $\|X(t)\|$ is shown to be decreasing at large values.  At times $t$ where
a job enters the network, the average value of $\|X(t)\|-\|X(t-)\|$ is shown to be negative.  Applying the strong 
Markov property and iterating over such intervals until time $\tau_M$, $\tau_M = \inf \{t: \|X(t)\| \le M\}$, 
will imply (\ref{eq1.13.2}) of Theorem
\ref{thm1.13.1}.  At the end of Section 5, we briefly discuss the networks
mentioned at the beginning of the introduction where jobs are assigned to the queue with the least
remaining work.  We refer to such networks as \emph{join the least loaded queue} (JLLQ) networks.  The JLLQ rule 
is easier to handle than the JSQ rule.  It is analyzed in \cite{FC} using fluid limits;
here, we mention an alternative approach.  At the end of the section, we also mention analogs of 
Theorem \ref{thm1.13.1} where the state space $S$ is modified.

Theorem \ref{thm1.17.2} is demonstrated in Section 6.  The basic idea there is to employ estimates from Sections 4 and 5
to obtain lower bounds on the rate at which $\|X^{(a)}(t)\|^{(a)}$ decreases at large values 
of $|X^{(a)}(t)|_{n}^{(a)}$.  If these values
occur with too high a probability at a given $a\in \mathcal{A}$, it will follow that, for appropriate $t$,
\[E_{\mathcal{E}^{(a)}} [\|X^{(a)}(t)\|^{(a)}] < E_{\mathcal{E}^{(a)}} [\|X^{(a)}(0)\|^{(a)}].\]
Since $\mathcal{E}^{(a)}$ is an equilibrium, this is not possible, which gives us upper bounds on the probabilities in
(\ref{eq1.18.1}) and (\ref{eq1.18.2}).  Theorem \ref{thm5.22.1} follows quickly from Theorem \ref{thm1.17.2}  
and Prohorov's Theorem.  

In Section 7, we discuss the family of networks mentioned earlier, with $N=D=2$, whose equilibria have very large
workload.  This behavior arises because of the manner in which the service discipline restricts service 
for jobs with large residual service time.  The main result is given in Theorem \ref{prop7.3.1}.  
Because of the length of the argument, we provide a condensed proof of part of the theorem.
%
\subsection*{Notation}
For the reader's convenience, we mention here some of the notation in the paper.  The term $x$ indicates a state in 
the state space $S$ and
the corresponding term $X(t)$ (or $X$) indicates a random state at time $t$ (or with respect to a given measure); 
quantities such as $z_n$ and $Z_{n}(t)$, and
$w_{n,i}$ and $W_{n,i}(t)$ (or, $Z_n$ and $W_{n,i}$) play analogous roles.  
The results in this paper are for class independent and for station
independent networks; in order to avoid repeating all statements and proofs for the two cases, we employ notation
with the symbol $\circ$, such as $\stackrel{\circ}{m}_{k,A}$, which will have different meanings in the 
two cases. 
The main norms with which we will work are $\|\cdot\|$, $\|\cdot\|_L$, $\|\cdot\|_R$ and $\|\cdot\|_A$.  The
symbols $L$, $R$ and $A$ also appear in other contexts, such as for the numbers $L_i$, $i=1,\ldots,4$, service effort
per job $R_{n,i}$ and sets $A$; the meaning should always be clear from the context.  
The symbols $\mathbb{Z}_+$ and $\mathbb{R}_+$ denote the positive integers and positive real numbers, with
$\mathbb{Z}_{+,0} = \mathbb{Z}_+ \cup \{0\}$ and $\mathbb{R}_{+,0} = \mathbb{R}_+ \cup \{0\}$; $\lfloor y\rfloor$
denotes the integer part of $y \in \mathbb{R}_+$ and $\mathbf{1}\{A\}$ denotes the indicator function of the event $A$.  
\section{Markov process background}

In this section, we provide a more detailed description of the construction of
the Markov process $X(\cdot)$ that underlies a JSQ network. We then show how
Theorem \ref{thm1.14.1} and its corollary follow from Theorem \ref{thm1.13.1}.
Analogs of this material for queueing networks are given in Bramson \cite{B1},
and, for networks with weighted max-min fair policies, in Bramson \cite{B2}.
Because of the similarity of these settings, we present a summary here and
refer the reader to \cite{B1} for more detail.

\subsection*{Construction of the Markov process}
We define the state space $S$ to be the set
\begin{equation}
\label{eq2.1.1}
\left(\mathbb{Z}^3 \times 2^{B_N} \times \mathbb{R}^3\right)^\infty \times \mathbb{R}^K
\end{equation}
subject to the following constraints.  The components $s_k$, $k=1, \ldots,K$, of $\mathbb{R}^K$ are all
positive; they correspond to the residual interarrival times $u_k$ of the $K$ arrival streams, 
scaled by the arrival rates $\alpha_k$ as in (\ref{eq1.16.7}).  Only a finite
number of the 7-tuples of coordinates of $\left(\mathbb{Z}^3 \times 2^{B_N} \times \mathbb{R}^3\right)^\infty $ 
are nonzero.  Such a 7-tuple corresponds to a 
particular job in the network:  the first coordinate $n$, $n=1, \ldots, N$, corresponds to the queue
of the job and the second coordinate $i$, $i=1, \ldots, z_n$, gives its rank at the queue based on the time of 
arrival there, with ``older" jobs receiving a lower rank.  The third and fourth coordinates $k$, $k=1,\ldots,K$, and
$A$, $\emptyset \neq A\subseteq B_N$, correspond to the arrival stream and selection set of the job when it entered its queue. 
The fifth coordinate $\lsm$, $\lsm \ge 0$, measures the age $\osm$ 
of the job, scaled by $\mu_{j_{n,i}}$, as in (\ref{eqN7.2}); the sixth coordinate $w$, $w > 0$, measures
its residual service time $v$, scaled by $\mu_{j_{n,i}}$, as in (\ref{eq1.16.6}); 
and the last coordinate $r$, $r \in [0,1]$, is the current service effort devoted to
the job.  (If one wishes, one can introduce other coordinates in the state space descriptor, such as the elapsed
time since the last arrival from each stream, or the amount of service already received by each job.) 
Since the discipline is assumed to be non-idling, the sum of the last coordinates for all jobs at a
given nonempty queue must equal 1.  Note that for given $N$ and $K$, the state
space $S$ constructed in this manner is unique.

The last five coordinates may be considered as functions of the first two, and 
written as $k_{n,i}$, $A_{n,i}$, $\lsm_{n,i}$, $w_{n,i}$ and $r_{n,i}$.  The third and fourth coordinates,
$k_{n,i}$ and $A_{n,i}$, are needed because of how $\|\cdot\|$ is defined in Section 4 and can be omitted for
class independent networks.  Various coordinates can also be omitted for particular service disciplines 
(such as FIFO).  

For given $N^{\prime} \le N$ and $K^{\prime} = K$, one can define $S^{\prime}$ as above, but with $n=1,\ldots,N^{\prime}$.
Then $S^{\prime}$ is the
\emph{projection} of $S$ obtained by restricting nonzero 7-tuples to the first $N^{\prime}$ queues. 
For $x\in S$, the 
projection $x^{\prime}\in S^{\prime}$ of $x$ is the element obtained by omitting 7-tuples with $n > N^{\prime}$.
One can define projections of $S$ onto spaces $S^{\prime}$
corresponding to other subsets of $\{1,\ldots,N\}$ analogously, but we will not use these in 
the paper.

Employing the above notation, we construct a metric $d(\cdot,\cdot)$ on $S$:  for given $x,x'\in S$, with the coordinates
labelled correspondingly, we set
\begin{equation}
\label{eq2.3.1} 
\begin{split}
d(x,x^\prime)=& \sum_{n=1}^N \sum_{i=1}^\infty \left(\left( |\lsm_{n,i} - \lsm^\prime_{n,i}| + 
|w_{n,i} - w^\prime_{n,i}| + |r_{n,i} - r^\prime_{n,i}| \right) \wedge1 \right) \\
&\quad+ \sum_{n=1}^N \sum_{i=1}^\infty \left( \mathbf{1}\{k_{n,i} \neq k^\prime_{n,i}\} + 
\mathbf{1}\{A_{n,i} \neq A^\prime_{n,i}\}\right) \\
&\quad+ \sum_{n=1}^N 
|z_{n} - z^\prime_{n}| + \sum_{k=1}^K |s_{k} - s^\prime_{k}|.
\end{split}
\end{equation}
One can check that $d(\cdot,\cdot)$ is separable and locally compact; more detail is given on page 82 of \cite{B1}.  One
can also check that the sets $E_M\subset S$, $M > 1$, defined by $z_n \le M$, 
$\lsm_{n,i} \le M$, $1/M \le w_{n,i} \le M$ and $1/M \le s_k \le M$, for
all $n= 1,\ldots,N$, $k=1,\ldots,K$ and $i=1,\ldots,z_n$ are compact with respect to $d(\cdot,\cdot)$.   We
equip $S$ with the standard Borel $\sigma$-algebra inherited from $d(\cdot,\cdot)$, which we denote by 
$\mathscr{S}$.
In Lemma \ref{lem4.9.1}, we will show $\|\cdot \|_L$, $\|\cdot \|_R$ and $\|\cdot \|_A$ are continuous in 
$d(\cdot,\cdot)$.

At the end of Section 6, we will employ the partial completion $\bar{S}$ of $S$ that is obtained by allowing the weighted residual
service times $w_{n,i}$  to take values in $[0,\infty)$ rather than just $(0,\infty)$.  
Otherwise, the construction of $\bar{S}$ is the same as that
just given for $S$.  The metric $\bar{d}(\cdot,\cdot)$ is defined analogously to $d(\cdot,\cdot)$, and the Borel 
$\sigma$-algebra $\bar{\mathscr{S}}$ is defined correspondingly.  Under $\bar{d}(\cdot,\cdot)$, the sets 
$\bar{E}_M\subset \bar{S}$, $M > 1$, defined by $z_n \le M$, $\lsm_{n,i} \le M$, $0\le w_{n,i} \le M$ and $1/M \le s_k \le M$, for
all $n= 1,\ldots,N$, $k=1,\ldots,K$ and $i=1,\ldots,z_n$ are compact.  One can define projections from $\bar{S}$ onto
spaces $\bar{S}^{\prime}$ in the same manner as was done from $S$ onto $S^{\prime}$. 

The Markov process $X(t)$, $t \ge 0$, underlying the network is defined to be the right continuous process taking
values $x$ in $S$ whose evolution is determined by the given JSQ rule together with the assigned service discipline.  Jobs
$(n,i)$ are allocated service according to rates $R_{n,i}(t)$ (the service effort per job) that are assumed to be 
constant in between arrivals and departures
of jobs at the queues.  
Over such an interval, $\Lbig_{n,i}(t)$ increases at rate $\mu_{j_{n,i}}$, and $W_{n,i}(t)$ and $S_k(t)$ decrease at rates
$\mu_{j_{n,i}}R_{n,i}(t)$ and $\alpha_k$, respectively.  (We write $\Lbig_{n,i}(t)$ for the age functions to 
avoid possible confusion later with constants $L_i$ that will be introduced.)
Upon an arrival or departure, rates are re-assigned according to the discipline.  
The restriction made here for the discipline, that service rates remain constant between arrivals and departures of
jobs, is for convenience, and allows one to inductively construct $X(\cdot)$ over increasing times in a simple way.  
The standard service
disciplines satisfy this property.  We also note that the construction here is not restricted to the JSQ rule, and
applies to other rules for assigning arriving jobs to a queue.

Along the lines of page 85 of \cite{B1}, a filtration $(\mathcal{F}_{t})$, $t\in
[0,\infty]$, can be assigned to $X(\cdot)$ so that $X(\cdot)$ is Borel right
and, in particular, is strong Markov. The processes $X(\cdot)$ fall into the
class of piecewise-deterministic Markov processes, for which the reader is
referred to Davis \cite{D} for more detail.
\subsection*{Recurrence}
The Markov process $X(\cdot)$ is said to be {\em Harris recurrent} if, for some
nontrivial $\sigma$-finite measure $\varphi$,
\[ \varphi(B) > 0 \quad \mbox{~implies~} \quad P_{x}(\eta_{B} = \infty) = 1
\quad \mbox{~for all~} x \in S, \]
where $\eta_{B} = \int^{\infty}_{0}\mathbf{1}\{{X(t)\in B}\}dt$. If $X(\cdot)$ is Harris
recurrent, it possesses a stationary measure $\pi$ that is unique up to a
constant multiple. When $\pi$ is finite, $X(\cdot)$ is said to be {\em positive
Harris recurrent}.

A practical condition for determining positive Harris recurrence can be given by
using petite sets. A nonempty set $A \in \mathscr{S}$ is said to be {\em petite}
if for some fixed probability measure $a$ on $(0, \infty)$ and some nontrivial
measure $\nu$ on $(S, \mathscr{S})$,
\[ \nu(B) \le \int^{\infty}_{0}P^{t}(x,B)a(dt) \]
%
for all $x \in A$ and $B \in \mathscr{S}$. Here, $P^{t}(\cdot\,, \cdot)$, $t\ge
0$, is the semigroup associated with $X(\cdot)$. As mentioned in the
introduction, a petite set $A$ has the property that each set $B$ is ``equally
accessible'' from all points $x \in A$ with respect to the measure $\nu$. Note
that any nonempty measurable subset of a petite set is also petite.

For given $\delta > 0$, set
\[ \tau^{A}(\delta) = \mbox{~inf~}\{t \ge \delta: X(t) \in A\} \]
and $\tau^{A} = \tau^{A}(0)$. Then $\tau^{A}(\delta)$ is a stopping time.
Employing petite sets and $\tau^{A}(\delta)$, one has the following
characterization of Harris recurrence and positive Harris recurrence. (The
Markov process and state space need to satisfy minimal regularity conditions,
as on page 86 of \cite{B1}.) The criteria are from Meyn and Tweedie \cite{MT};
discrete time analogs of the different parts of the proposition have long been
known, see, for instance, Nummelin \cite{N} and Orey \cite{O}.
\begin{thm}
\label{thm2.7.1}
(a) A Markov process $X(\cdot)$ is Harris recurrent if and only if there exists
a closed petite set $A$ with
\begin{equation}
\label{eq2.7.2}
P_{x}(\tau^{A} < \infty) = 1 \quad \mbox{~for all~} x \in S.
\end{equation}
(b) Suppose the Markov process $X(\cdot)$ is Harris recurrent. Then $X(\cdot)$
is positive Harris recurrent if and only if there exists a closed petite set $A$
such that, for some $\delta > 0$,
\begin{equation}
\label{eq2.8.1}
\underset{x \in A}{\mbox{$\sup$~}}E_{x}[\tau^{A}(\delta)] < \infty .
\end{equation}
\end{thm}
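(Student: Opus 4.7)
The plan is to reduce both continuous-time claims to a regenerative structure built on the petite set $A$. The minorizing measure $\nu$ plays the role of an atomic component in a Nummelin-type splitting, while the $a$-randomized kernel $K(x,\cdot) = \int_0^\infty P^t(x,\cdot)\, a(dt)$ is the bridge between continuous time and the discrete structure of successive returns to $A$. Throughout, one uses the standing regularity of $X(\cdot)$ (right continuity, strong Markov, Borel right property; see \cite{B1}, page 86).

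For part (a), the sufficiency direction is the more direct one. Given a closed petite $A$ with $P_x(\tau^A < \infty) = 1$ for every $x$, I would set $\varphi = \nu$ and fix $B$ with $\nu(B) > 0$. By the strong Markov property at the successive hitting times $\tau^A_n$, the process visits $A$ infinitely often from every starting point; at each visit, $K(X(\tau^A_n), B) \ge \nu(B) > 0$, and a Borel--Cantelli argument applied to the independent Bernoulli trials encoded by this minorization yields $\eta_B = \infty$ almost surely. The necessity direction is more delicate: assuming Harris recurrence, one produces a maximal irreducibility measure $\psi$ and, via resolvent minorization arguments of the kind developed in \cite{MT}, extracts a closed $\psi$-positive petite set $A$; Harris recurrence then gives $P_x(\tau^A < \infty) = 1$ directly.

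For part (b), the easy direction is ``positive Harris $\Rightarrow$ bounded return times.'' Positive Harris recurrence furnishes a finite invariant measure $\pi$; choosing a closed petite $A$ with $\pi(A) > 0$ and applying the continuous-time Kac identity under stationarity produces the uniform bound $\sup_{x \in A} E_x[\tau^A(\delta)] < \infty$. The sufficiency direction is where I expect the main obstacle. I would construct a candidate finite invariant measure via a Kac-type excursion formula: with $\mu$ a suitably chosen probability measure on $A$ (typically a normalization built from $\nu$),
\begin{equation*}
\pi(B) \;=\; \int_A E_x\!\left[\int_0^{\tau^A(\delta)} \mathbf{1}\{X(t)\in B\}\, dt\right] \mu(dx),
\end{equation*}
with $\pi(S) \le \sup_{x\in A} E_x[\tau^A(\delta)] < \infty$, and then verify that $\pi P^t = \pi$ for every $t \ge 0$.

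The principal obstacle is precisely this verification of $\pi P^t = \pi$. In discrete time it is a routine excursion-splicing argument using the strong Markov property at the regeneration time, but in continuous time the excursions from $A$ can be of arbitrary length, the state at a fixed time $t$ need not lie on a natural regeneration epoch, and $X(\cdot)$ in general does not return to $A$ in a single jump. The $\delta$-buffer inside $\tau^A(\delta)$, together with the petiteness minorization on $A$, are exactly what allow one to smooth the excursion-based measure into an honest invariant measure for the full semigroup; executing this carefully is the heart of the argument, and the details follow the approach in \cite{MT}, whose discrete-time antecedents appear in \cite{N} and \cite{O}.
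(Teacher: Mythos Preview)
The paper does not prove Theorem~\ref{thm2.7.1}. It is stated as a known characterization and attributed to Meyn and Tweedie \cite{MT}, with the remark that discrete-time analogs go back to Nummelin \cite{N} and Orey \cite{O}; the theorem is then used as a black box to derive Theorem~\ref{thm1.14.1} from Theorem~\ref{thm1.13.1}. So there is no ``paper's own proof'' to compare against.

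Your sketch is broadly in the spirit of the cited references, and you correctly identify that the substantive work lies in the continuous-time invariance verification in part~(b). One point of caution in your outline for the sufficiency in part~(a): the Borel--Cantelli step is not quite as clean as you suggest, since the ``Bernoulli trials'' coming from the minorization $K(X(\tau^A_n),\cdot) \ge \nu(\cdot)$ are not literally independent of the past; one needs either a splitting construction to manufacture genuine regeneration or a conditional Borel--Cantelli argument. This is handled in \cite{MT}, but it is worth flagging rather than treating as routine. Otherwise your plan matches the standard route, and since the paper simply imports the result, there is nothing further to reconcile.
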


\subsection*{Theorem \ref{thm1.14.1} and its corollary} Theorem \ref{thm1.14.1} follows from 
Theorem \ref{thm1.13.1}, Theorem \ref{thm2.7.1} and the elementary continuity result, 
Lemma \ref{lem4.9.1}.  To see this, note that both
conditions (\ref{eq2.7.2}) and (\ref{eq2.8.1}) of Theorem \ref{thm2.7.1} are immediate consequences
of (\ref{eq1.13.2}) of Theorem \ref{thm1.13.1}, with $A=A_M$, for appropriate $M$, and $\delta=1$ since,
in Theorem \ref{thm1.14.1}, $A_M$ is assumed to be petite.  By Lemma \ref{lem4.9.1}, the norm
$||\cdot||$ in (\ref{eq1.11.3}) is continuous in the metric $d(\cdot)$, and hence $A_M$ is also closed.
It therefore follows from Theorem \ref{thm2.7.1} that $X(\cdot)$ is positive Harris recurrent, which
implies Theorem \ref{thm1.14.1}.
 
Corollary \ref{cor1.15.2} follows immediately from Theorem \ref{thm1.14.1} and
the assertion, before the statement of the corollary, that the sets $A_{M}$ are
petite under the assumptions (\ref{eq1.14.2}) and (\ref{eq1.15.1}). A somewhat
stronger version of the analogous assertion for queueing networks is
demonstrated in Proposition 4.7 of \cite{B1}. (The proposition states that the
sets $A$ are uniformly small.) The reasoning is the same in both cases and does
not involve the JSQ rule or the service discipline. The argument, in essence, requires that
one wait long enough for the network to have at least a given positive
probability of being empty; the time $t$ does not depend on $x$ for $\|x\|\le M$.
This will follow from (\ref{eq1.14.2}) and the definition of $\|\cdot\|$ in Section 4,
since the work in the network is bounded by a linear function of M.  Since the residual interarrival
times are also bounded by a linear function of $M$, by using (\ref{eq1.15.1}), one can also show
that the joint distribution function of the residual interarrival times has an
absolutely continuous component at this time, whose density is bounded away
from 0. It will follow that the set $A_{M}$ is petite with respect to $\nu$,
with $a$ chosen as the point mass at $t$, if $\nu$ is concentrated on the
empty states, where it is a small enough multiple of $|\mathcal{R}|$-dimensional
Lebesque measure restricted to a small cube.
\section{A useful routing lemma}  In this section, we rephrase the conditions (\ref{eq1.10.1})  and
(\ref{eq1.10.2}) that are used to define subcriticality for class independent and station independent 
JSQ networks by using sums that will be more convenient for us to work with when proving Theorem \ref{thm1.13.1}
in Sections 4 and 5.  For this, we employ Lemma \ref{lem3.2.3}.  The desired sums for class independent and station
independent JSQ networks are then given in the following corollary.

For the lemma, we employ the following notation.  We consider $\beta_{k,A,n} \ge 0$ for $k=1,\ldots, K$,
$n=1,\ldots, N$ and $A \subseteq B_N$, where $B_N = \{1,\ldots, N\}$, 
and assume that, for each $k$ and $A$,
\begin{equation}
\label{eq3.2.1}
\beta_{k,A,n} > 0 \quad \text{iff} \quad \beta_{k,A,n^{\prime}} > 0 
\qquad \text{for all } n,n^{\prime}\in A,	
\end{equation}
that is, whether or not $\beta_{k,A,n}$ is zero does not depend on $n$, for $n\in A$.
For each $B\subseteq B_N$, let $\{r_{B,n}, n=1,\ldots,N\}$ be a probability distribution concentrated
on $B$, with $r_{B,n} > 0$ for $n\in B$, so that for each $k$, $B$, and $A\subseteq B$, and
$n$ restricted to $B$,
\begin{equation}
\label{eq3.2.2}
\gamma_{k,A,B}\stackrel{\text{def}}{=} r_{B,n} \beta_{k,A,n}   
\qquad \text{does not depend on } n.       
\end{equation}
\begin{lem}
\label{lem3.2.3}
Suppose that $\beta_{k,A,n}$,  $\gamma_{k,A,B}$ and $r_{B,n}$ are chosen as in (\ref{eq3.2.1})--(\ref{eq3.2.2}),
and moreover that $\gamma_{k,A,B}$ satisfies
\begin{equation}
\label{eq3.2.4}
\sum_{k} \sum_{A\subseteq B} \gamma_{k,A,B} \leq \rho \qquad \text{for all } B\subseteq B_N
\end{equation}
for some $\rho$.  Then, for each $k$ and $A$, there is a probability distribution 
$\{q_{k,A,n}, n=1,\ldots,N\}$ concentrated on $A$, so that
\begin{equation}
\label{eq3.2.5}
\sum_{k} \sum_{A\subseteq B_N} \beta_{k,A,n} q_{k,A,n} \leq \rho \qquad \text{for all } n.
\end{equation}
\end{lem}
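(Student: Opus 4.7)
Plan: Express the existence of $\{q_{k,A,n}\}$ as the feasibility of a linear program, use LP duality to reduce to bounding a level-set functional, and carry out that bound by a layer-cake decomposition driven by the structure of the $r_{B,n}$.

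First, I would consider the LP $\min t$ over $q_{k,A,n} \geq 0$, $\sum_{n \in A} q_{k,A,n} = 1$ for each $(k,A)$, and $\sum_{k} \sum_{A \ni n} \beta_{k,A,n} q_{k,A,n} \leq t$ for each $n$. The lemma is equivalent to showing that the optimum $t^{*}$ satisfies $t^{*} \leq \rho$. A standard Lagrangian calculation produces the dual
\begin{equation*}
t^{*} \;=\; \max_{y} \, \sum_{k,A} \min_{n \in A} y_{n} \beta_{k,A,n},
\end{equation*}
where $y$ ranges over probability vectors on $B_{N}$, and by strong LP duality it suffices to bound the right side by $\rho$.

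Second, I would unpack (\ref{eq3.2.1})--(\ref{eq3.2.2}) into a cleaner form. Taking $B = B_{N}$ in (\ref{eq3.2.2}) and setting $\omega_{n} := r_{B_{N},n} > 0$, one sees that in the nontrivial case $\beta_{k,A,\cdot}$ is in fact positive on all of $B_{N}$ and that $\delta_{k,A} := \omega_{n} \beta_{k,A,n}$ is independent of $n$ (set $\delta_{k,A}=0$ in the trivial case). For any $B$ admitting a nontrivial $(k,A)$ with $A \subseteq B$, dividing two instances of (\ref{eq3.2.2}) forces $r_{B,n}/\omega_{n}$ to be constant on $B$, whence $r_{B,n} = \omega_{n}/\Omega_{B}$ with $\Omega_{B} := \sum_{n \in B} \omega_{n}$, and therefore $\gamma_{k,A,B} = \delta_{k,A}/\Omega_{B}$. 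In all cases the hypothesis (\ref{eq3.2.4}) then reads
\begin{equation*}
\sum_{k} \sum_{A \subseteq B} \delta_{k,A} \;\leq\; \rho \, \Omega_{B} \qquad \text{for all } B \subseteq B_{N},
\end{equation*}
the degenerate situation where no $(k,A)$ with $A \subseteq B$ is nontrivial being immediate (both sides are zero).

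Third, fix a probability vector $y$ and set $\phi_{n} := y_{n}/\omega_{n}$, so that $\sum_{n} \omega_{n} \phi_{n} = 1$ and $y_{n} \beta_{k,A,n} = \delta_{k,A}\phi_{n}$. The layer-cake identity $\min_{n \in A} \phi_{n} = \int_{0}^{\infty} \mathbf{1}\{A \subseteq B_{c}\}\,dc$, with $B_{c} := \{n : \phi_{n} \geq c\}$, reduces the dual objective to $\int_{0}^{\infty} \sum_{k} \sum_{A \subseteq B_{c}} \delta_{k,A}\,dc$; applying the reformulated hypothesis pointwise in $c$ bounds this by $\rho \int_{0}^{\infty} \Omega_{B_{c}}\,dc = \rho \sum_{n} \omega_{n} \phi_{n} = \rho$, which closes the argument. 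The main obstacle is the second step, namely establishing the factorizations $r_{B,n} = \omega_{n}/\Omega_{B}$ and $\gamma_{k,A,B} = \delta_{k,A}/\Omega_{B}$ from (\ref{eq3.2.1})--(\ref{eq3.2.2}) and checking that $(k,A)$-pairs with $\beta_{k,A,\cdot} \equiv 0$ and $B$'s admitting only such pairs cause no problem; once this reduction is in hand, the LP-duality and layer-cake pieces follow the standard transportation/max-flow pattern.
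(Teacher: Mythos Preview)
Your argument is correct, but it follows a genuinely different route from the paper's.

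The paper proceeds by a direct variational argument: it minimizes the quadratic penalty $V(q)=\sum_n\bigl(\sum_{k,A}\beta_{k,A,n}q_{k,A,n}-\rho\bigr)^2$ over the compact set of admissible $q$, picks a minimizer $q^{\min}$, and then uses a local perturbation (shifting $\epsilon$ mass in a single $q_{k',A',\cdot}$ from an overloaded coordinate $n_1$ to an underloaded $n_2$) to show that any $A'$ carrying positive mass at an overloaded site must lie entirely inside the overloaded set $\bar A^{\min}$. A single application of the hypothesis (\ref{eq3.2.4}) with $B=\bar A^{\min}$, together with (\ref{eq3.2.2}) for that same $B$, then forces $A^{\min}=\emptyset$.

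Your proof instead dualizes: you recast the problem as an LP, identify the dual $t^*=\max_y\sum_{k,A}\min_{n\in A}y_n\beta_{k,A,n}$, and exploit (\ref{eq3.2.2}) at $B=B_N$ to obtain the global factorization $\beta_{k,A,n}=\delta_{k,A}/\omega_n$, from which $\gamma_{k,A,B}=\delta_{k,A}/\Omega_B$ and the hypothesis becomes $\sum_{A\subseteq B}\sum_k\delta_{k,A}\le\rho\,\Omega_B$. The layer-cake identity then integrates this bound over the level sets $B_c$ of $\phi_n=y_n/\omega_n$ to close the dual at $\rho$. This is clean and places the lemma squarely in the transportation/max-flow framework; it also makes transparent that (\ref{eq3.2.1}) is, in fact, a consequence of (\ref{eq3.2.2}) once $B=B_N$ is available. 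By contrast, the paper's argument is more elementary in that it needs no duality machinery and invokes (\ref{eq3.2.2}) and (\ref{eq3.2.4}) only for the single set $\bar A^{\min}$; your approach uses $B=B_N$ for the factorization and then all level sets $B_c$ in the integral. Both are short; yours is more structural, the paper's more self-contained.
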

As a consequence of Lemma \ref{lem3.2.3}, we obtain the following two inequalities from (\ref{eq1.10.1})
and (\ref{eq1.10.2}).
\begin{cor}
\label{cor3.3.1}
(a) Suppose that (\ref{eq1.10.1}) holds for a class independent JSQ network.  Then, for each $k$ and
$A$, there is a probability distribution $q_{k,A,n}$ concentrated on $A$ so that
\begin{equation}
\label{eq3.3.2}
\sum_{k} \sum_{A\subseteq B_N} \alpha_{k} p_{k,A} q_{k,A,n}m_n \leq \rho_1 \qquad \text{for all } n.
\end{equation}
(b) Suppose that (\ref{eq1.10.2}) holds for a station independent JSQ network.  Then, for each $k$ and
$A$, there is a probability distribution $q_{k,A,n}$ concentrated on $A$ so that
\begin{equation}
\label{eq3.3.3}
\sum_{k} \sum_{A\subseteq B_N} \alpha_{k} p_{k,A}  q_{k,A,n}m_{k,A} \leq \rho_2 \qquad 
\text{for all } n.
\end{equation}
\end{cor}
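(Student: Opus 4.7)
The plan is to exhibit, in each case, explicit choices of $\beta_{k,A,n}$ and $r_{B,n}$ that turn (\ref{eq1.10.1}) or (\ref{eq1.10.2}) into exactly the hypothesis (\ref{eq3.2.4}) of Lemma \ref{lem3.2.3}, and then invoke the lemma.  In both parts I would set $\beta_{k,A,n} = 0$ for $n \notin A$; then the distribution $q_{k,A,n}$ produced by the lemma will automatically be concentrated on $A$, as required in (\ref{eq3.3.2}) and (\ref{eq3.3.3}).

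For part (a), I would take $\beta_{k,A,n} = \alpha_k p_{k,A} m_n$ for $n \in A$ and $r_{B,n} = \mu_n \big/ \sum_{n'\in B}\mu_{n'}$ for $n \in B$.  Since $\mu_n m_n = 1$, the product $r_{B,n}\beta_{k,A,n} = \alpha_k p_{k,A}\big/\sum_{n'\in B}\mu_{n'}$ is independent of $n \in A$, so (\ref{eq3.2.2}) is satisfied.  Summing over $k$ and $A \subseteq B$ produces exactly the expression inside the braces in (\ref{eq1.10.1}), so (\ref{eq3.2.4}) holds with $\rho = \rho_1$.  Applying Lemma \ref{lem3.2.3} yields $q_{k,A,n}$ concentrated on $A$ with $\sum_k \sum_{A \subseteq B_N}\alpha_k p_{k,A} m_n q_{k,A,n} \le \rho_1$ for all $n$, which is (\ref{eq3.3.2}).

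For part (b), the station independent case, I would instead take $\beta_{k,A,n} = \alpha_k p_{k,A} m_{k,A}$ for $n \in A$ and $r_{B,n} = 1/|B|$ for $n \in B$.  Here $r_{B,n}\beta_{k,A,n} = \alpha_k p_{k,A} m_{k,A}/|B|$ is trivially independent of $n$, and summing over $k$ and $A \subseteq B$ produces the bracketed quantity in (\ref{eq1.10.2}); so (\ref{eq3.2.4}) holds with $\rho = \rho_2$, and Lemma \ref{lem3.2.3} delivers (\ref{eq3.3.3}).

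There is essentially no obstacle beyond unwinding definitions, since all the real work is carried out inside the lemma.  The only incidental items to check are that $r_{B,n} > 0$ for $n \in B$ in both cases (immediate from $\mu_n > 0$ and $|B| \ge 1$) and that condition (\ref{eq3.2.1}) holds, which reduces to the observation that $m_n$ and $m_{k,A}$ are strictly positive, so that the nonvanishing of $\beta_{k,A,n}$ on $A$ is governed by $\alpha_k p_{k,A}$ alone and is therefore independent of $n$.
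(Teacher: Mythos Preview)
Your approach is exactly the paper's: apply Lemma \ref{lem3.2.3} with $\beta_{k,A,n} = \alpha_k p_{k,A} m_n$ and $r_{B,n}=\mu_n/\sum_{n'\in B}\mu_{n'}$ in part (a), and $\beta_{k,A,n}=\alpha_k p_{k,A} m_{k,A}$ and $r_{B,n}=1/|B|$ in part (b). The verifications you sketch match those in the paper.

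However, your extra stipulation that $\beta_{k,A,n}=0$ for $n\notin A$ is both unnecessary and actually breaks hypothesis (\ref{eq3.2.2}) of the lemma. That condition requires $r_{B,n}\beta_{k,A,n}$ to be independent of $n$ for all $n\in B$ whenever $A\subseteq B$. If $A\subsetneq B$ and $\alpha_k p_{k,A}>0$, your choice gives a nonzero product for $n\in A$ and zero for $n\in B\setminus A$, so (\ref{eq3.2.2}) fails. The remedy is simply to drop this modification and define $\beta_{k,A,n}$ by the same formula for all $n$, as the paper does; the conclusion of Lemma \ref{lem3.2.3} already guarantees that $q_{k,A,\cdot}$ is concentrated on $A$, so you get nothing from forcing $\beta_{k,A,n}=0$ off $A$.
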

Corollary \ref{cor3.3.1} converts (\ref{eq1.10.1}) and (\ref{eq1.10.2}) into (\ref{eq3.3.2}) and
(\ref{eq3.3.3}), which will be easier to work with in Section 4.  When $\rho_1 < 1$, 
respectively, $\rho_2 < 1$, these inequalities imply that the network, where incoming jobs are assigned the
queue $n$ with probabities $q_{k,A,n}$, is subcritical.  On the other hand, by substituting $B$ for $B_N$ in the
inner sum and summing over $n$, one can check that the inequalities in (\ref{eq3.3.2}), respectively,
(\ref{eq3.3.3}), cannot be strict for all $n$.  These conditions therefore give an alternative 
characterization for subcriticality when $\rho_i < 1$.  (We will not need this in the paper.)

One can check that when the JSQ network is symmetric, one may set $q_{k,A,n} = 1/|A|$ for $n\in A$.  This follows
by summing the left side of (\ref{eq3.3.3}) over all $n$ and comparing the sum with the right side of (\ref{eq1.10.2}),
for $B = B_N$.  
\begin {proof}[Proof of Corollary \ref{cor3.3.1}]
(a)  We apply Lemma \ref{lem3.2.3}, setting
\begin{equation}
\label{eq3.4.1}
\beta_{k,A,n} = \alpha_{k} p_{k,A} m_n, \quad r_{B,n} = \mu_{n} \mathbf{1}\{n\in B\} {\Big /}  \sum_{n^{\prime} \in B} 
\mu_{n^{\prime}}, \quad \rho = \rho_1.
\end{equation}
It is easy to check that the conditions (\ref{eq3.2.1})--(\ref{eq3.2.2}) are satisfied with this choice of 
$\beta_{k,A,n}$ and $r_{B,n}$.  Substitution of these quantities into (\ref{eq3.2.4}) gives the quantity
in braces on the right side of (\ref{eq1.10.1}) for each choice of $B$, and substitution into (\ref{eq3.2.5})
gives (\ref{eq3.3.2}).  Part (a) of the corollary therefore follows from the lemma.

(b)  The argument for this part is analogous to the first part; here, we set
\begin{equation}
\label{eq3.5.1}
\beta_{k,A,n} = \alpha_{k} p_{k,A} m_{k,A}, \quad r_{B,n} = \mathbf{1}\{n\in B\} / |B|, 
\quad \rho = \rho_2.
\end{equation}
It is easy to check that the conditions (\ref{eq3.2.1})-(\ref{eq3.2.2}) are again satisfied.  Substitution into
(\ref{eq3.2.4}) gives the quantity in braces on the right side of (\ref{eq1.10.2}) and substitution into 
(\ref{eq3.2.5}) gives (\ref{eq3.3.3}).  Part (b) of the corollary therefore also follows from the lemma.
\end{proof}
We now prove the lemma.
\begin {proof}[Proof of Lemma \ref{lem3.2.3}]
For a family of probability distributions $q_{k,A,n}$, indexed by $k$ and $A$, and concentrated on $A$, set
$\zeta_{k,A,n}^{(q)} = \beta_{k,A,n} q_{k,A,n}$,
\begin{equation}
\label{eq3.6.1}
V(q) = \sum_n \left( \sum_k \sum_{A\subseteq B_N} \zeta_{k,A,n}^{(q)} - \rho \right)^2
\end{equation}
and
\begin{equation}
\label{eq3.6.2}
V^{\text{min}} = \min_q V(q).
\end{equation}
One can check that $V(q)$ is continuous in $q$ and that the set of $q$ is compact with respect to the metric
\begin{equation}
\label{eq3.6.3}
d(q,q^{\prime}) = \max_{k,A,n} |q_{k,A,n} - q_{k,A,n}^{\prime}|.
\end{equation}
So, $V^{\text{min}}$ is attained at some $q^{\text{min}}$.  We set 
\begin{equation}
\label{eq3.7.1}
\begin{split}
A^{\text{min}} =& \left\{n: \sum_k \sum_{A\subseteq B_N} \zeta_{k,A,n}^{\text{min}} > \rho \right\}, \\
\bar{A}^{\text{min}} =& 
\left\{n: \sum_k \sum_{A\subseteq B_N} \zeta_{k,A,n}^{\text{min}} \ge \rho \right\},
\end{split}
\end{equation}
where $\zeta_{k,A,n}^{\text{min}} \stackrel{\text{def}}{=} \beta_{k,A,n} q_{k,A,n}^{\text{min}}$.  In order to
show (\ref{eq3.2.5}), it suffices to show $A^{\text{min}} = \emptyset$.

We first claim that for any $k^{\prime}$, $A^{\prime} \subseteq B_N$ and $n_1 \in 
\bar{A}^{\text{min}}$, 
\begin{equation}
\label{eq3.7.2}
A^{\prime} \subseteq \bar{A}^{\text{min}} \qquad \text{if } \,
\zeta_{k^{\prime},A^{\prime},n_1}^{\text{min}} > 0.
\end{equation}
We argue by contradiction and show that if (\ref{eq3.7.2}) is violated for some $k^{\prime}$, $A^{\prime}$ and
$n_1$, then for 
appropriate $\tilde{q}$, $V(\tilde{q}) < V(q^{\text{min}})$, which is not possible.
The proof of this does not use (\ref{eq3.2.4}). 

For such $A^{\prime}$ and $n_1$, $n_1 \in A^{\prime}$ since $\zeta_{k^{\prime},A^{\prime},\cdot}^{\text{min}}$ is
concentrated on $A^{\prime}$.  We choose $n_2 \in A^{\prime} - \bar{A}_{\text{min}}$ and define
a new family of probability distributions $\tilde{q}_{k,A,\cdot}$ by
\begin{equation}
\nonumber
\begin{split}
\tilde{q}_{k,A,n} &= q_{k,A,n}^{\text{min}} \quad 
\text{unless $k=k^{\prime}$, $A=A^{\prime}$ and either $n=n_1$ or $n=n_2$}, \\
\tilde{q}_{k^{\prime},A^{\prime},n_1}  &= q_{k^{\prime},A^{\prime},n_1}^{\text{min}} - \epsilon, \\ 
\tilde{q}_{k^{\prime},A^{\prime},n_2}  &= q_{k^{\prime},A^{\prime},n_2}^{\text{min}} + \epsilon,
\end{split}
\end{equation}
where $\epsilon > 0$ is small.

For small enough $\epsilon >0$,
\begin{equation}
\label{eq3.8.1}
\begin{split}
&\left( \sum_k \sum_{A\subseteq{B_N}} \tilde{\zeta}_{k,A,n_1} - \rho \right)^2 -
\left( \sum_k \sum_{A\subseteq{B_N}} \zeta_{k,A,n_1}^{\text{min}} - \rho \right)^2 \\  
& \qquad < \left( \sum_k \sum_{A\subseteq{B_N}} \zeta_{k,A,n_2}^{\text{min}} - \rho \right)^2 -
\left( \sum_k \sum_{A\subseteq{B_N}} \tilde{\zeta}_{k,A,n_2} - \rho \right)^2,
\end{split}
\end{equation}
where $\tilde{\zeta}_{k,A,n} = \beta_{k,A,n} \tilde{q}_{k,A,n}$.  The inequality is obvious when
$n_1 \in A^{\text{min}}$, since the left side of (\ref{eq3.8.1}) will be negative and the right
side will be positive.  When $n_1 \in \bar{A}^{\text{min}} -A^{\text{min}}$, the inequality is also
true since the left side is bounded above by a multiple of $\epsilon^2$ (with the second term being
0), and the right side is bounded below by a multiple of $\epsilon$.  For $n \neq n_1,n_2$,
\begin{equation}
\label{eq3.9.1}
\left( \sum_k \sum_{A\subseteq{B_N}}  \tilde{\zeta}_{k,A,n} - \rho \right)^2 =
\left( \sum_k \sum_{A\subseteq{B_N}}  \zeta_{k,A,n}^{\text{min}} - \rho \right)^2.
\end{equation}
So, by (\ref{eq3.8.1}) and (\ref{eq3.9.1}), $V(\tilde{q}) < V(q^{\text{min}})$, which contradicts
the definition of $q^{\text{min}}$.  This shows (\ref{eq3.7.2}). 

Employing (\ref{eq3.7.2}), we now show that $A^{\text{min}} = \emptyset$.  One has
\begin{equation}
\label{eq3.10.1}
\begin{split}
\rho &\ge \sum_k \sum_{A\subseteq \bar{A}^{\text{min}}} \gamma_{k,A,\bar{A}^{\text{min}}}
= \sum_k \sum_{A\subseteq \bar{A}^{\text{min}}} \,\sum_{n=1}^N \gamma_{k,A,\bar{A}^{\text{min}}} q_{k,A,n}^{\text{min}}
\\ &= \sum_k \sum_{A\subseteq \bar{A}^{\text{min}}} \, \sum_{n \in \bar{A}^{\text{min}}} 
r_{\bar A^{\text{min}},n}\beta_{k,A,n} q_{k,A,n}^{\text{min}}
\\ &= \sum_{n \in \bar{A}^{\text{min}}} r_{\bar A^{\text{min}},n} \sum_k \sum_{A\subseteq B_N} \zeta_{k,A,n}^{\text{min}}.
\end{split}
\end{equation}
The inequality follows from (\ref{eq3.2.4}), the second equality follows from (\ref{eq3.2.2}) since $r_{B,\cdot}$
is concentrated on $B$, and the third equality follows from the definition of $\zeta_{k,A,n}^{\text{min}}$ and
(\ref{eq3.7.2}).  But, on account of the definitions of $A^{\text{min}}$ and $\bar{A}^{\text{min}}$, the last
quantity in (\ref{eq3.10.1}) is at least 
\begin{equation}
\nonumber
\rho \sum_{n \in \bar{A}^{\text{min}}} r_{A^{\text{min}},n} = \rho,
\end{equation}
with strict inequality holding if $A^{\text{min}} \neq \emptyset$ because all the terms in the sum are strictly positive.
Since the strict inequality contradicts (\ref{eq3.10.1}), this implies $A^{\text{min}} = \emptyset$, and 
hence the inequality in (\ref{eq3.2.5}), as desired.
\end{proof}
\section{Definitions of norms and basic inequalities}
This section introduces the norms and provides the basic inequalities we will need in Sections 5 and 6 for the proofs of 
Theorems \ref{thm1.13.1} and \ref{thm1.17.2}.
The section consists of two subsections.  We first define the norms appearing in (\ref{eq1.11.3}), $\|\cdot\|_L$,
$\|\cdot\|_R$ and $\|\cdot\|_A$, in terms of which $\|\cdot\|$ was defined.  We then state and prove
Propositions \ref{prop4.10.2} and \ref{prop4.13.1}. These propositions give inequalities on the decrease of $\|\cdot\|$ and lie 
at the heart of the analysis in Sections 5 and 6.  Proposition \ref{prop4.13.1} is the only place in the first six sections
of the paper where the JSQ property is employed.
%
\subsection*{Definition of the norms} In (\ref{eq1.11.3}), we defined the norm $\|\cdot\|$ in terms of the norms 
$\|\cdot\|_L$, $\|\cdot\|_R$ and  $\|\cdot\|_A$.  We now define these norms.

Recall from Section 1 that $z_n$ denotes the number of jobs at queue $n$; $w_n$ denotes the weighted workload
at the queue, which is defined in (\ref{eq1.16.6}) along with $w_{n,i}$; $r_{n,i}$ denotes the service effort for
job $(n,i)$; and $s_k$ denotes the weighted interarrival time at the arrival stream $k$, and is given by
(\ref{eq1.16.7}).  The notation $Z_{n}(\cdot)$, $W_{n}(\cdot)$, $W_{n,i}(\cdot)$, $R_{n,i}(\cdot)$ and
$S_{k}(\cdot)$ will be used for the corresponding quantities of the process $X(\cdot)$.  We also employ the 
arrival rates $\alpha_k$, mean service times $m_j$, service rates $\mu_j$ and transition probabilities
$p_{k,A}$ that were introduced earlier, as well as the transition probabilities $q_{k,A,n}$ that are given
in Corollary \ref{cor3.3.1} for class independent and station independent JSQ networks.  We will set
$\epsa = 1 - \rho_1$ for class independent networks and $\epsa = 1 - \rho_2$ for station
independent networks, where $\rho_i$ are the traffic intensities.

For $x\in S$, set 
\begin{equation}
\label{eq4.3.1}
\|x\|_L = \sum_{n=1}^{N} \|x\|_{L,n}, \quad \|x\|_R = \sum_{n=1}^{N} \|x\|_{R,n}, 
\quad \|x\|_A = \sum_{k=1}^{K} \|x\|_{A,k}.
\end{equation}
(The subscripts $L$, $R$ and $A$ are mnemonics for ``left", ``right" and ``arrivals".)  We define these individual
components as follows.  Since $\|x\|_{L,n}$ is defined in terms of quantities obtained from $\|x\|_{R,n}$, we define
the latter first.

For $n = 1,\ldots,N$, set
\begin{equation}
\label{eq4.6.1}
\|x\|_{R,n} = \sum_{i=1}^{z_n} \stackrel{\circ}{m}_{k_{n,i},A_{n,i}} \psi_{W}(w_{n,i}),
\end{equation}
where $w_{n,i} = \mu_{j_{n,i}} v_{n,i}$.
%
%
%
The other components in (\ref{eq4.6.1}) are defined as follows:
\begin{equation}
\label{eq4.3.3}
\stackrel{\circ}{m}_{k,A} = \begin{cases}
1 \quad  & \text{for class independent networks}, \\
m_{k,A}  \quad & \text{for station independent networks}, 
\end{cases}
\end{equation}
and $k_{n,i}$ and $A_{n,i}$ denote the arrival stream and selection set for the $i^{\text{th}}$ job currently
at queue $n$. 
The function $\psi_{W}: \mathbb{R}_{+,0} \rightarrow \mathbb{R}_{+,0}$ is required to be continuously differentiable,
with $\psi_{W}(0) = 0$, $\psi_{W}^{\prime}(y) > 0$, $\psi_{W}^{\prime}(y) \nearrow \infty$ as
$y \nearrow \infty$, and
\begin{equation}
\label{eq4.6.2}
\int_{0}^{\infty} \psi_{W}(\mu_{j}y) F_{j}(dy) \le \epsd \qquad \text{for all } j,
\end{equation}
where $\epsd = (\epsa)^{2}/40$.  Since $F_j(\cdot)$ has finite mean, it is not difficult to choose
such a $\psi_W(\cdot)$.

%
%
%
%
%
%
%
%

The norm $\|\cdot\|_R$ will be the main contributor to $\|\cdot\|$ for jobs with large residual
service times; service of such a job $(n,i)$ sharply decreases $\|\cdot\|_R$ when $\psi_{W}^{\prime}(w_{n,i})$ is
large.  The term $\stackrel{\circ}{m}_{k,A}$ is needed because of the different definitions of the
traffic intensities $\rho_1$ and $\rho_2$ for class independent and station independent networks.
%

For $n=1,\ldots,N$, we set
\begin{equation}
\label{eq4.3.2}
\|x\|_{L,n} = \left(\sum_{i=1}^{z_n} \stackrel{\circ}{m}_{k_{n,i},A_{n,i}} (w_{n,i}^{+} \wedge \ellv)\right) \psi_{Z}(z_n).
\end{equation}
%
%
%
%
Here,
\begin{equation}
\label{eq4.4.1}
w_{n,i}^+ = w_{n,i} + \epsb
\end{equation}
and 
\begin{equation}
\label{eq4.4.2}
\psi_{Z}(y) = \begin{cases}
\epsa + (\epsc/\ellv) \text{log}(y+1)  \quad & \text{for } y\in [0,\ellw], \\
\epsa + (\epsc/\ellv) \text{log}(\ellw +1)  \quad &  \text{for } y > \ellw, 
\end{cases}
\end{equation}
for a small $\epsc > 0$, which will be defined in (\ref{eq4.13.0}) in terms of
$\epsa$ and other quantities. 

We choose $\ellv$ in (\ref{eq4.3.2}) and (\ref{eq4.4.2}) so that
\begin{equation}
\label{eq4.6.3}
\psi_{W}^{\prime}(\ellv) = 2\ellx
\end{equation}
for given $\ellx$, with $\ellx \ge 4$ and $\ellx$ large enough so that $\ellv \ge \epsb$.  We will 
specify $\ellx$ later as we find convenient.  We choose $\ellw$ so that
\begin{equation}
\label{eq4.7.1}
\psi_{Z}(\ellw) = \epsa + (\epsc/\ellv) \text{log}(\ellw+1) = \ellx;
\end{equation}
it follows that when $w \ge \ellv$,
\begin{equation}
\label{eq4.7.2}
\psi_{Z}(y) \le \frac{1}{2} \psi_{W}^{\prime}(w) \qquad \text{for all } y.
\end{equation}
Note that $\ellv \rightarrow \infty$ and $\ellw \rightarrow \infty$ as $\ellx \rightarrow \infty$.
The inequality (\ref{eq4.7.2}) is used in Proposition \ref{prop4.10.2}, and will tell us that, for large
residual service times, the norm $\|\cdot\|_R$ is ``more powerful" than $\|\cdot\|_L$.
%
%

The following provides some motivation for the definition of $\|x\|_{L,n}$.  The norm $\|\cdot\|_L$ will be the 
main contributor to $\|\cdot\|$ for jobs with moderate residual service times.  The terms $w_{n,i}^{+} \wedge \ellv$ and
$\psi_{Z}(z_n)$ are each bounded, with the term $w_{n,i}^+$ decreasing continuously over time as the 
corresponding job is served; we employ $w_{n,i}^+$ rather than $w_{n,i}$, in (\ref{eq4.3.2}), to ensure 
$\|x\|_{L,n} \rightarrow \infty$ as $z_n \rightarrow \infty$.  The inclusion of the term $\psi_{Z}(z_n)$, which
is nondecreasing in $z_n$, is reasonable since more jobs at a queue should correspond to a greater
value of $\|\cdot\|_L$.

For $k=1,\ldots,K$, we set
\begin{equation}
\label{eq4.8.1}
\|x\|_{A,k} = (1 + \epsa/2) \sum_{A\subseteq B_N}\left( \sum_{n=1}^{N} 
p_{k,A} q_{k,A,n} \stackrel{\circ}{m}_{k,A} \psi_{Z}(z_n)\right) \psi_{A}(s_k).
\end{equation}
Here, $q_{k,A,n}$ is chosen as in the corollary to Lemma \ref{lem3.2.3} and $s_k = \alpha_{k}u_k$.  We require
that $\psi_{A}(\cdot)$ be locally Lipschitz on $[0, \infty)$, with
\begin{equation}
\label{eq4.8.2}
\psi_{A}(y) = M_1 - y \qquad \text{for } y \in [0,M_1],
\end{equation}
and $\psi_{A}^{\prime}(y) > 0$ for $y \in (M_1, \infty)$ and appropriate
$M_1 \ge 1$, with $\psi_{A}^{\prime}(y) \nearrow \infty$ as $y \nearrow \infty$, so that 
\begin{equation}
\label{eq4.8.3}
\int_{M_1 / \alpha_k}^{\infty} \left( \psi_{A} (\alpha_{k}y) + \alpha_{k}y \right) G_{k}(dy) \le \epsf.
\end{equation}
Since $G_{k}(\cdot)$ has finite mean, it is not difficult to choose such $M_1$ and $\psi_A(\cdot)$.
Because $G_{k}(\cdot)$ has  mean 
$1/\alpha_k$ and (\ref{eq4.8.2}) is satisfied, (\ref{eq4.8.3}) implies that
\begin{equation}
\label{eq4.8.4}
M_1 - \int_{0}^{\infty}  \psi_{A} (\alpha_{k}y)  G_{k}(dy) \ge 1 - \epsf,
\end{equation}
which will be used in the proof of Proposition \ref{prop4.22.4}.

The norm $\|\cdot\|_A$ is chosen so that it interfaces properly with $\|\cdot\|_L$: in between arrivals, 
$\|X(t)\|_A$ will increase more slowly than $\|X(t)\|_L$ + $\|X(t)\|_R$ decreases; at an arrival, the average
decrease of $\|X(t)\|_A$ will more than offset the increase in $\|X(t)\|_L$ + $\|X(t)\|_R$ because of (\ref{eq4.6.2}) 
and (\ref{eq4.8.4}), and the choice of $\epsc$ in (\ref{eq4.4.2}).

Note that the weighted ages $\lsm_{n,i}$ are not employed in the definition of $\|\cdot\|$.  They are not needed,
in particular, to show petiteness of bounded sets in $\|\cdot\|$ under (\ref{eq1.14.2}) and (\ref{eq1.15.1}), since
they do not appear when $z_n = 0$.  (See, e.g., the end of Section 2.)

In Section 2, we demonstrated Theorem \ref{thm1.14.1} by employing Theorem \ref{thm1.13.1} and Lemma \ref{lem4.9.1},
with the latter asserting that $\|\cdot\|$ is continuous in the metric $d(\cdot,\cdot)$.  Having defined
$\|\cdot\|$, we now state and prove the lemma.
\begin{lem}
\label{lem4.9.1}
The norm $\|\cdot\|$ given by (\ref{eq1.11.3}) is continuous in the metric $d(\cdot,\cdot)$ given by (\ref{eq2.3.1}). 
\end{lem}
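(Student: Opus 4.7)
The plan is to reduce continuity of $\|\cdot\|$ to continuity of its finitely many summands in the continuous coordinates, after exploiting the fact that the metric $d(\cdot,\cdot)$ forces the discrete coordinates to stabilize on any convergent sequence. I will argue componentwise: it suffices to show that each of $\|\cdot\|_L$, $\|\cdot\|_R$ and $\|\cdot\|_A$ is continuous in $d(\cdot,\cdot)$, since $\|\cdot\|$ is their sum.

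First I would fix $x\in S$ and a sequence $x^{(m)}\to x$ in the metric $d(\cdot,\cdot)$. By (\ref{eq2.3.1}), $d(x,x^{(m)})$ contains the term $\sum_n |z_n - z_n^{(m)}|$, whose summands are nonnegative integers. Hence once $d(x,x^{(m)})<1$, we have $z_n^{(m)}=z_n$ for every $n=1,\ldots,N$, and in particular the 7-tuples with $i>z_n$ are identically zero in both $x$ and $x^{(m)}$, so all sums over $i$ become finite. Similarly, the indicator contributions $\mathbf{1}\{k_{n,i}\neq k_{n,i}^{(m)}\}$ and $\mathbf{1}\{A_{n,i}\neq A_{n,i}^{(m)}\}$ each contribute either $0$ or $1$ to $d(x,x^{(m)})$, so for $m$ large enough, $k_{n,i}^{(m)}=k_{n,i}$ and $A_{n,i}^{(m)}=A_{n,i}$ for every relevant $n,i$. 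Consequently, for all sufficiently large $m$, the formulas (\ref{eq4.6.1}), (\ref{eq4.3.2}) and (\ref{eq4.8.1}) for $\|x^{(m)}\|_{R,n}$, $\|x^{(m)}\|_{L,n}$ and $\|x^{(m)}\|_{A,k}$ involve the same finite set of indices and the same constants $\stackrel{\circ}{m}_{k_{n,i},A_{n,i}}$, $p_{k,A}$, $q_{k,A,n}$ as for $x$; only the quantities $w_{n,i}^{(m)}$ and $s_k^{(m)}$ differ from those of $x$.

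Next I would control those continuous coordinates. Since each difference $|w_{n,i}-w_{n,i}^{(m)}|$ is bounded above by its own wedge-with-$1$ contribution to $d$ (for $d$ small), we have $w_{n,i}^{(m)}\to w_{n,i}$, and similarly $s_k^{(m)}\to s_k$. The functions appearing in the norms are all continuous on their domains: $\psi_W$ is continuously differentiable by its definition, $\psi_Z$ is continuous on $[0,\infty)$ as one checks directly from (\ref{eq4.4.2}) at the breakpoint $\ellw$, $\psi_A$ is locally Lipschitz by hypothesis, and $y\mapsto (y+\epsb)\wedge\ellv$ is Lipschitz. Composition and finite sums/products of continuous functions are continuous, so each term in $\|x^{(m)}\|_L$, $\|x^{(m)}\|_R$, $\|x^{(m)}\|_A$ converges to the corresponding term in $\|x\|_L$, $\|x\|_R$, $\|x\|_A$; since the number of terms is finite and stable for $m$ large, the sums converge as well. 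This yields $\|x^{(m)}\|\to\|x\|$.

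The main obstacle, or at least the only nonroutine point, is handling the discrete coordinates $z_n$, $k_{n,i}$, $A_{n,i}$ on which the norms genuinely depend: these do not vary continuously in any naive sense. The resolution, and the reason Lemma \ref{lem4.9.1} is true rather than merely plausible, is that the metric $d(\cdot,\cdot)$ has been defined in (\ref{eq2.3.1}) with an integer summand for $|z_n - z_n'|$ and indicator summands for the discrete descriptors, so that convergence in $d$ actually forces exact equality of these quantities eventually. Once this is observed, the remainder of the argument is the routine continuity check sketched above.
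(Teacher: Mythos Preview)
Your proof is correct and follows essentially the same approach as the paper's: both reduce to continuity of the three component norms and use the key observation that $d(x,x')<1$ forces equality of the discrete coordinates $z_n$ (and, as you note more explicitly than the paper, also of $k_{n,i}$ and $A_{n,i}$), after which continuity follows from that of $\psi_W$, $\psi_A$ and the truncation map in the real coordinates $w_{n,i}$, $s_k$. Your version is simply a more detailed rendering of the paper's brief sketch.
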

\begin {proof}[Proof]
It suffices to show each of the norms $\|\cdot\|_L$, $\|\cdot\|_R$ and $\|\cdot\|_A$ is continuous in 
$d(\cdot,\cdot)$.  The argument in each case is elementary.  Noting that $d(x,x^{\prime}) < 1$ implies
$z_n = z_{n}^{\prime}$ for all $n$, and that $w_{n,i}^{+} = w_{n,i} +\epsb$ is continuous in
$w_{n,i}$, for given $i$, the continuity of $\|\cdot\|_L$ is not difficult to see.  For the same reasons and 
since $\psi_{W}(\cdot)$ and $\psi_{A}(\cdot)$ are locally Lipschitz, $\|\cdot\|_R$ and $\|\cdot\|_A$ are also continuous.  
\end{proof}
\subsection*{Basic inequalities} In this subsection, we demonstrate Propositions \ref{prop4.10.2} and 
\ref{prop4.13.1}, which are at the foundation of the analysis in Sections 5 and 6.  The evolution of 
$X(t)$ between arrivals of jobs is deterministic.  In Proposition \ref{prop4.10.2}, we provide upper
bounds on the rate of change of $\|X(t)\|$ there by employing its components $\|X(t)\|_L$, $\|X(t)\|_R$ and 
$\|X(t)\|_A$, which exist almost everywhere since the underlying functions are locally Lipschitz except where
jobs arrive or depart, with jumps being negative at departures.  We set
\begin{equation}
\label{eq4.10.1}
\stackrel{\circ}{\mu}_n = \begin{cases}
\mu_n & \quad \text{for class independent networks}, \\
1 & \quad \text{for station independent networks}, 
\end{cases}
\end{equation}
and note that 
\begin{equation}
\label{eq4.10.1a}
\stackrel{\circ}{\mu}_n = \,\stackrel{\circ}{m}_{k,A} \mu_j.
\end{equation}

For Proposition \ref{prop4.10.2}, as elsewhere in the paper, we assume the network is either class or station
independent.  
\begin{prop}
\label{prop4.10.2}
For every subcritical JSQ network,
\begin{equation}
\label{eq4.10.3}
\begin{split}
\|X(t)\|_{L}^{\prime} + \|X(t)\|_{R}^{\prime} &\le \|X(t)\|_{L}^{\prime} + \frac{1}{2}  \|X(t)\|_{R}^{\prime} \\ 
&\le \sum_{n} \stackrel{\circ}{\mu}_n(\epsa - \psi_{Z}(Z_{n}(t)))
\end{split}
\end{equation}
for almost all $t$.  Moreover, for any subset $\mathcal{K}$ of $\{1,\cdots,K\}$,
\begin{equation}
\label{eq4.10.4}
\sum_{k\in \mathcal{K}} \|X(t)\|_{A,k}^{\prime} 
\le (1 +\epsa/2)\rho_i \sum_{n} \stackrel{\circ}{\mu}_n\psi_{Z}(Z_n (t))
\end{equation}
for almost all $t$, for $i = 1,2$.  Consequently, for almost all $t$,
\begin{equation}
\label{eq4.10.5}
\|X(t)\|^{\prime} \le (\epsa/2) \sum_{n} \stackrel{\circ}{\mu}_{n}(2 - \psi_{Z}(Z_n (t))).
\end{equation}
\end{prop}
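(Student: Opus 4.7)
The plan is to compute the right-derivatives of the three norm components between arrival and departure epochs, where the evolution of $X(t)$ is piecewise Lipschitz and driven by the service-effort rates $R_{n,i}(t)$, and then to assemble the three resulting inequalities. First, between such epochs each $W_{n,i}(t)$ is nonincreasing (it falls at rate $\mu_{j_{n,i}} R_{n,i}(t) \ge 0$), so since $\psi_W$ is strictly increasing each summand of $\|X(t)\|_R$ is nonincreasing and $\|X(t)\|_R^{\prime} \le 0$. This is exactly the first inequality in (\ref{eq4.10.3}).

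For the middle inequality in (\ref{eq4.10.3}), I differentiate $\|\cdot\|_{L,n}$ and $\|\cdot\|_{R,n}$ directly. Using (\ref{eq4.10.1a}) to replace $\stackrel{\circ}{m}_{k_{n,i},A_{n,i}}\mu_{j_{n,i}}$ by $\stackrel{\circ}{\mu}_n$, and noting that $z_n$, $k_{n,i}$, $A_{n,i}$ are constant between events, one obtains
\[
\|X(t)\|_{L,n}^{\prime} = -\stackrel{\circ}{\mu}_n\,\psi_Z(z_n)\sum_{i=1}^{z_n} R_{n,i}(t)\,\mathbf{1}\{w_{n,i}^+(t) < \ellv\}
\]
and
\[
\|X(t)\|_{R,n}^{\prime} = -\stackrel{\circ}{\mu}_n \sum_{i=1}^{z_n} R_{n,i}(t)\,\psi_W^{\prime}(w_{n,i}(t)).
\]
When $z_n = 0$ both these vanish and the $n$-th contribution on the right side of (\ref{eq4.10.3}) is also zero since $\psi_Z(0)=\epsa$; when $z_n \ge 1$ non-idling gives $\sum_i R_{n,i}(t) = 1$, so it suffices to verify term-by-term that
\[
\psi_Z(z_n)\,\mathbf{1}\{w_{n,i}^+ < \ellv\} + \tfrac{1}{2}\psi_W^{\prime}(w_{n,i}) \ge \psi_Z(z_n) - \epsa.
\]
The inequality is immediate on $\{w_{n,i}^+ < \ellv\}$; on the complementary event $w_{n,i}^+ \ge \ellv$, the calibration (\ref{eq4.6.3})--(\ref{eq4.7.2}) of $\psi_W^{\prime}$ against $\psi_Z$ delivers $\tfrac{1}{2}\psi_W^{\prime}(w_{n,i}) \ge \psi_Z(z_n)$, and again the bound holds. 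Summing over $n$ completes the proof of (\ref{eq4.10.3}).

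For (\ref{eq4.10.4}), between events $z_n$ is constant and $s_k$ decreases at rate $\alpha_k$, so
\[
\|X(t)\|_{A,k}^{\prime} = -(1 + \epsa/2)\,\alpha_k\,\psi_A^{\prime}(s_k) \sum_{A \subseteq B_N}\sum_n p_{k,A}\,q_{k,A,n}\stackrel{\circ}{m}_{k,A}\psi_Z(z_n).
\]
By (\ref{eq4.8.2}), $\psi_A^{\prime} \equiv -1$ on $[0,M_1]$ and $\psi_A^{\prime} > 0$ on $(M_1,\infty)$, so $-\psi_A^{\prime}(s_k) \le 1$ everywhere. Summing over $k \in \mathcal{K}$, swapping orders of summation, and invoking Corollary \ref{cor3.3.1} to conclude $\sum_k\sum_A \alpha_k\, p_{k,A}\, q_{k,A,n}\stackrel{\circ}{m}_{k,A} \le \rho_i\stackrel{\circ}{\mu}_n$ (in the class-independent case $\stackrel{\circ}{m}_{k,A} = 1$ and $\stackrel{\circ}{\mu}_n = \mu_n = 1/m_n$, and Corollary \ref{cor3.3.1}(a) gives the bound; in the station-independent case $\stackrel{\circ}{m}_{k,A} = m_{k,A}$ and $\stackrel{\circ}{\mu}_n = 1$, and Corollary \ref{cor3.3.1}(b) delivers it directly) yields (\ref{eq4.10.4}).

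Finally, (\ref{eq4.10.5}) follows by adding (\ref{eq4.10.3}) to (\ref{eq4.10.4}) with $\mathcal{K} = \{1,\ldots,K\}$ and substituting $\rho_i = 1 - \epsa$: the coefficient of $\psi_Z(Z_n(t))$ on the right side is $1 - (1 + \epsa/2)(1 - \epsa) = \epsa/2 + \epsa^2/2 \ge \epsa/2$. The main technical hurdle is the term-by-term bound in the middle step of (\ref{eq4.10.3}), where one must check that whenever the $L$-norm's indicator switches off, the $R$-norm's derivative picks up enough slack; this is precisely why $\psi_W^{\prime}$, $\psi_Z$, $\ellx$ and $\ellv$ are calibrated through (\ref{eq4.6.3})--(\ref{eq4.7.2}).
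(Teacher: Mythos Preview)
Your proof is correct and follows essentially the same approach as the paper's own proof. Both arguments differentiate the three norm components between arrival/departure epochs, use the calibration (\ref{eq4.7.2}) to show that on $\{w_{n,i}^+ \ge \ellv\}$ the $\tfrac{1}{2}\psi_W'$ term compensates for the missing $\psi_Z$ contribution, apply $-\psi_A' \le 1$ together with Corollary~\ref{cor3.3.1} for (\ref{eq4.10.4}), and then combine using $\rho_i = 1 - \epsa$; your term-by-term convex-combination organization in the middle step is a minor cosmetic variant of the paper's indicator-splitting, but the content is the same.
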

The inequality (\ref{eq4.10.5}) follows immediately from (\ref{eq4.10.3}) and (\ref{eq4.10.4}), with
$\mathcal{K} = \{1,\ldots,K \}$, together with (\ref{eq1.11.3}), since $\epsa = 1 - \rho_i$.  The
purpose of the term $\frac{1}{2} \|X(t)\|_{R}^{\prime}$ in the middle quantity in (\ref{eq4.10.3}) is
to permit a sharper upper bound, for large $\|X(t)\|_R$, by including the contribution from $\psi_{W}(\cdot)$
in (\ref{eq4.6.1}).  This strengthening of (\ref{eq4.10.5}) will be applied in the proof of case (c)
of Proposition \ref{prop4.22.4}.  Also, since
$\psi_{A}^{\prime}(y) \rightarrow \infty$ as $y\rightarrow \infty$, the inclusion in the first sum in (\ref{eq4.10.4})
of terms corresponding to certain $k\notin \mathcal{K}$ can improve the bound on the right side, which will
allow us to strengthen (\ref{eq4.10.5}) for large $\|X(t)\|_A$. 
\begin {proof}[Proof of Proposition \ref{prop4.10.2}]
We need to demonstrate the first two displays.  For (\ref{eq4.10.3}), we note that 
\begin{equation*}
\begin{split}
& \|X(t)\|_{L}^{\prime} + \frac{1}{2} \|X(t)\|_{R}^{\prime} \\
& \quad \le - \sum_{n} \stackrel{\circ}{\mu}_{n} \psi_{Z}(Z_{n}(t)) 
\sum_{i=1}^{Z_{n}(t)} \mathbf{1} \{W_{n,i}^{+}(t) < \ellv \}R_{n,i}(t) \\
& \qquad - \frac{1}{2}\sum_{n} \stackrel{\circ}{\mu}_{n}  
\sum_{i=1}^{Z_{n}(t)} \psi_{W}^{\prime}(W_{n,i}(t)) \mathbf{1}\{W_{n,i}^{+}(t) \ge \ellv \}R_{n,i}(t) \\
& \quad \le - \sum_{n} \stackrel{\circ}{\mu}_{n} \psi_{Z}(Z_{n}(t)) 
\sum_{i=1}^{Z_{n}(t)} \mathbf{1}\{W_{n,i}^{+}(t) < \ellv \}R_{n,i}(t) \\
& \qquad - \sum_{n} \stackrel{\circ}{\mu}_{n} \psi_{Z}(Z_{n}(t)) 
\sum_{i=1}^{Z_{n}(t)} \mathbf{1}\{W_{n,i}^{+}(t) \ge \ellv \}R_{n,i}(t) \\
& \quad = - \sum_{n} \stackrel{\circ}{\mu}_{n} \psi_{Z}(Z_{n}(t))\mathbf{1}\{Z_{n}(t) > 0\} 
\le \sum_{n} \stackrel{\circ}{\mu}_{n} (\epsa - \psi_{Z}(Z_{n}(t))
\end{split}
\end{equation*}
holds almost everywhere. 
The first inequality follows from the definitions of $\|\cdot\|_L$, $\|\cdot\|_R$ and (\ref{eq4.10.1a}), since
$Z_n(\cdot)$ is constant almost everywhere.  For this, note that
$W^{\prime}_{n,i}(t)  = -\mu_{j_{n,i}}R_{n,i}(t)$ almost everywhere.
The second inequality follows from (\ref{eq4.7.2}), with the last
inequality using $\psi_Z(0) = \epsa$ and $\sum_i R_{n,i}(t) = 1$, when $Z_n(t) > 0$.  This implies (\ref{eq4.10.3}).

For (\ref{eq4.10.4}), we apply the definition of $\|\cdot\|_{A,k}$ in (\ref{eq4.8.1}) to obtain, for almost all $t$,
\begin{equation*}
\begin{split}
\sum_{k \in \mathcal{K}} \|X(t)\|_{A,k}^{\prime} 
& \le (1 + \epsa/2) \sum_{k\in \mathcal{K}} \sum_{A} \sum_{n} 
\alpha_k p_{k,A} q_{k,A,n} \stackrel{\circ}{m}_{k,A} \psi_{Z}(Z_{n}(t)) \\
& \le (1 + \epsa /2) \rho_i \sum_n \stackrel{\circ}{\mu}_{n} \psi_{Z}(Z_{n}(t)),
\end{split}
\end{equation*}
where the first inequality follows from $\psi_{A}^{\prime} (y) \ge -1$, and the second inequality follows from the
corollary to Lemma \ref{lem3.2.3}.
\end{proof}
We still need to specify the constant $\epsc$ that was employed in (\ref{eq4.4.2}); for later reference, we also recall
the constant $\epsd$ from equations (\ref{eq4.6.2}) and (\ref{eq4.4.1}):
\begin{equation}
\label{eq4.13.0}
\epsd = M_{1}\!\stackrel{\circ}{m}\!\!\mbox{}^{\text{ratio}} \epsc = (\epsa)^{2}/40.
\end{equation}
Recall that $M_{1}$ is specified in (\ref{eq4.8.2})--(\ref{eq4.8.3}) 
and $\stackrel{\circ}{m}\!\!\mbox{}^{\text{ratio}}$ is defined as in the equation 
before (\ref{eq1.16.3b}).
%
%
In Proposition \ref{prop4.13.1}, we show that, for this choice of 
$\epsd$ and $\epsc$, the expected increase in $\|\cdot\|$ is nonpositive
at the time $T$ of the first arrival of a job in the network.  We 
note that for $X(0) = x$ fixed, $T$ is deterministic, as is the evolution of $X(\cdot)$ up through $T-$.  
\begin{prop}
\label{prop4.13.1}
For every JSQ network,
\begin{equation}
\label{eq4.14.1}
E_x[\|X(T)\|] \le \|X(T-)\| \qquad \text{for all } x.
\end{equation}
\end{prop}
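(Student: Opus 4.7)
The plan is to decompose $\|X(T)\|-\|X(T-)\|=\Delta_L+\Delta_R+\Delta_A$, take conditional expectation given the random selection set $A^*\sim p_{k^*,\cdot}$ of the arriving job (with $k^*$, the unique stream whose interarrival countdown hits zero at $T$, deterministic given $x$), then average over $A^*$, and verify the sign balance coming from the constant choice (\ref{eq4.13.0}). At time $T$ a single job joins the shortest queue $n^*$ in $A^*$, brings an independent service time $V\sim F_{j^*}$, and resets $s_{k^*}$ from $0$ to $\alpha_{k^*}U$ for an independent $U\sim G_{k^*}$; no other descriptors change.

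First I would treat $\Delta_R$: the new job contributes $\stackrel{\circ}{m}_{k^*,A^*}\psi_W(\mu_{j^*}V)$, and (\ref{eq4.6.2}) gives $E[\Delta_R\mid A^*]\le\stackrel{\circ}{m}_{k^*,A^*}\epsd$. For $\Delta_L$, note the factorization $\|x\|_{L,n}=L_n\psi_Z(z_n)$ with $L_n:=\sum_{i=1}^{z_n}\stackrel{\circ}{m}_{k_{n,i},A_{n,i}}(w_{n,i}^+\wedge\ellv)$; only $n^*$ is affected, and the increment splits as $L_{n^*}\delta_{n^*}+\stackrel{\circ}{m}_{k^*,A^*}(w^+_{\text{new}}\wedge\ellv)\psi_Z(z_{n^*}+1)$, where $\delta_{n^*}:=\psi_Z(z_{n^*}+1)-\psi_Z(z_{n^*})$. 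Because $E[w^+_{\text{new}}\wedge\ellv]\le E[\mu_{j^*}V+\epsb]=1+\epsb$, the main contribution is at most $(1+\epsb)\stackrel{\circ}{m}_{k^*,A^*}\psi_Z(z_{n^*}+1)$; the JSQ property $\psi_Z(z_{n^*})\le\sum_n q_{k^*,A^*,n}\psi_Z(z_n(T-))$ then enters (this is the only invocation of the minimum-queue rule), and writing $\psi_Z(z_{n^*}+1)=\psi_Z(z_{n^*})+\delta_{n^*}$ separates the main term from an $O(\epsc)$ correction.

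For $\Delta_A$, write $\|x\|_{A,k}=(1+\epsa/2)\psi_A(s_k)\Xi_k(z)$ with $\Xi_k(z):=\sum_n T_{k,n}\psi_Z(z_n)$ and $T_{k,n}:=\sum_A p_{k,A}q_{k,A,n}\stackrel{\circ}{m}_{k,A}$. The reset of $s_{k^*}$ supplies the principal negative contribution: by (\ref{eq4.8.4}), $E[\psi_A(\alpha_{k^*}U)]\le M_1-(1-\epsf)$, so
\[E[\Delta_{A,k^*}\mid A^*]\le -(1+\epsa/2)(1-\epsf)\Xi_{k^*}(z(T-))+(1+\epsa/2)(M_1-1+\epsf)T_{k^*,n^*}\delta_{n^*},\]
while for $k\neq k^*$ the increment is the cross term $(1+\epsa/2)\psi_A(s_k(T-))T_{k,n^*}\delta_{n^*}$. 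Averaging over $A^*$ and invoking Corollary \ref{cor3.3.1} converts the JSQ-weighted sum into $\Xi_{k^*}(z(T-))$, and the principal pieces of $E[\Delta\mid X(T-)]$ collapse to $\bigl[(1+\epsb)-(1+\epsa/2)(1-\epsf)\bigr]\Xi_{k^*}(z(T-))$, which is at most $-(7/16)\epsa\,\Xi_{k^*}(z(T-))$ because $\epsb=\epsf=(\epsa)^2/40$ by (\ref{eq4.13.0}). The residual $O(\epsc)$ and $O(\epsd)$ terms from $\Delta_L$ and $\Delta_R$ are absorbed using the same identity $\epsd=M_1\stackrel{\circ}{m}^{\text{ratio}}\epsc=(\epsa)^2/40$ together with $\Xi_{k^*}(z(T-))\ge\epsa\stackrel{\circ}{m}_{k^*}$.

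The hard part will be controlling the cross-stream sum $(1+\epsa/2)\delta_{n^*}\sum_{k\neq k^*}\psi_A(s_k(T-))T_{k,n^*}$, because $\psi_A$ is unbounded and this positive contribution is not a priori comparable to the negative $\Xi_{k^*}$-piece. I plan to absorb it by first choosing $\ellx$, hence $\ellv$, large enough that $\delta_{n^*}/\psi_Z(z_{n^*})\le\epsc/(\ellv\epsa)$ is as small as required, then exploiting the estimate $\sum_k\psi_A(s_k)T_{k,n^*}\le\|x\|_A/\bigl((1+\epsa/2)\psi_Z(z_{n^*})\bigr)$ that follows from $\Xi_k(z)\ge T_{k,n^*}\psi_Z(z_{n^*})$, and combining with the extra negative $-(1+\epsa/2)(1-\epsf)T_{k^*,n^*}\delta_{n^*}$ obtained by expanding $\Xi_{k^*}(z(T))=\Xi_{k^*}(z(T-))+T_{k^*,n^*}\delta_{n^*}$ inside the $\Delta_{A,k^*}$ bound; the JSQ property enters this step again through $\psi_Z(z_{n^*})\le\psi_Z(z_n)$ for $n\in A^*$.
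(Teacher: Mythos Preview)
Your overall skeleton --- split $\Delta=\Delta_L+\Delta_R+\Delta_A$, bound $\Delta_R$ via (\ref{eq4.6.2}), use the JSQ inequality $\psi_Z(z_{n^*}(T-)+1)\le\sum_n q_{k^*,A^*,n}\psi_Z(z_n(T-)+1)$ for the main $\Delta_L$ piece, and invoke (\ref{eq4.8.4}) for the $s_{k^*}$-reset in $\Delta_A$ --- is precisely the paper's proof, and the arithmetic balance through (\ref{eq4.13.0}) is identical. One small correction: Corollary~\ref{cor3.3.1} is not ``invoked'' at the arrival; the probabilities $q_{k,A,n}$ are already baked into the definition (\ref{eq4.8.1}) of $\|\cdot\|_{A,k}$, so after the JSQ step the expression $\sum_A p_{k^*,A}\sum_n q_{k^*,A,n}\stackrel{\circ}{m}_{k^*,A}\psi_Z(z_n(T-)+1)$ \emph{is} the relevant $\Xi_{k^*}$-type sum, with no further appeal to Corollary~\ref{cor3.3.1} needed.

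Where your proposal breaks down is the ``hard part.'' The plan cannot close: your estimate $\sum_{k}T_{k,n^*}\psi_A(s_k(T-))\le \|X(T-)\|_A/\bigl((1+\epsa/2)\psi_Z(z_{n^*})\bigr)$ is correct, but it leaves a positive contribution of order $(\epsc/(\ellv\epsa))\,\|X(T-)\|_A$, which is \emph{unbounded in $x$}. The norm is built with a \emph{fixed} choice of $\ellx$ (hence fixed $\ellv$), so you cannot afterwards ``choose $\ellx$ large enough'' once the state is given; and the extra negative piece $-(1+\epsa/2)(1-\epsf)T_{k^*,n^*}\delta_{n^*}$ you extract is only $O(\epsc/\ellv)$, nowhere near large enough to absorb a term that scales with $\|X(T-)\|_A$. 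Concretely, take $K\ge 2$, let $s_{k^*}$ be tiny and $s_{k'}$ enormous for some $k'\neq k^*$ with $T_{k',n^*}>0$ and $z_{n^*}<\ellw$ (so $\delta_{n^*}>0$): your residual term is then arbitrarily large while every negative term you have produced is bounded.

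Compare with the paper: in (\ref{eq4.16.3}) the whole of $E_x[\|X(T)\|_A]-\|X(T-)\|_A$ is bounded by a quantity whose only positive contribution is $(1+\epsa/2)(\epsc/\ellv)M_1\sum_A p_{k^*,A}\stackrel{\circ}{m}_{k^*,A}$, obtained from $\psi_A(s_{k^*}(T-))=\psi_A(0)=M_1$; the cross-stream increments $T_{k',n^*}\delta_{n^*}\psi_A(s_{k'}(T-))$ for $k'\neq k^*$ do not appear. So the paper's route is \emph{not} to absorb those terms via $\|X(T-)\|_A$ as you attempt --- you should revisit how (\ref{eq4.16.3}) is derived rather than pursue the absorption argument, which fails for states with large $\|X(T-)\|_A$.
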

\begin{proof}
We consider the contribution to $\|\cdot\|$ from $\|\cdot\|_L$, $\|\cdot\|_R$ and $\|\cdot\|_A$, assuming
that a single arrival occurs from stream $k$ at time $T$; when arrivals simultaneously occur from other streams,
the corresponding bounds can be applied sequentially.

For $\|\cdot\|_L$, one has 
\begin{equation}
\label{eq4.14.2}
\begin{split}
& E_{x}[\|X(T)\|_L] - \|X(T-)\|_L 
= \sum_A \sum_n p_{k,A} q_{k,A,n}^{*} \\
&\qquad \times \sum_{i=1}^{Z_{n}(T-)} \stackrel{\circ}{m}_{k_{n,i},A_{n,i}}
(W_{n,i}^{+} (T) \wedge \ellv) (\psi_{Z}(Z_{n}(T-) +1) - \psi_{Z}(Z_{n}(T-))) \\
& \qquad + \sum_A \sum_n p_{k,A} q_{k,A,n}^{*} \stackrel{\circ}{m}_{k,A}
\psi_{Z} (Z_{n}(T-) +1) \int_{0}^{\infty} ((\mu_{j}y + \epsb)\wedge{\ellv}) F_{j}(dy), 
\end{split}
\end{equation}
for each $x$, where, for given $k$ and $A$, $q_{k,A,n}^{*}$ is the probability that the arriving job is assigned to 
queue $n$.  As previously, $j=(k,A,n)$.  Because of the JSQ rule, $q_{k,A,n}^{*}$ is concentrated on the shortest queues in $A$.
Since by (\ref{eq4.4.2}),
\begin{equation}
\label{eq4.15.0}
\psi_{Z}(Z_{n}(T-) +1) - \psi_{Z}(Z_{n}(T-)) \le \epsc / \ellv(Z_{n}(T-) +1) \le \epsc / \ellv,
\end{equation}
the first term on the right side of (\ref{eq4.14.2}) is at most
\begin{equation}
\label{eq4.15.1}
\epsc \!\stackrel{\circ}{m}\!\!\mbox{}^{\text{ratio}} \sum_A p_{k,A} \stackrel{\circ}{m}_{k,A}.
\end{equation}

On the other hand, since
\begin{equation*}
\int_{0}^{\infty} ((\mu_{j}y + \epsb)\wedge{\ellv}) F_{j}(dy)
 \le \mu_j \int_{0}^{\infty} y F_{j}(dy) + \epsb 
= 1 + \epsb,
\end{equation*}
which does not depend on $n$, and since $\psi_{Z}(y)$ is increasing in $y$ and $q_{k,A,n}$ is concentrated
on A, the last term on the right side of (\ref{eq4.14.2}) is at most
\begin{equation}
\label{eq4.15.2}
(1 + \epsb) \sum_{A} \sum_{n} 
 p_{k,A} q_{k,A,n} \stackrel{\circ}{m}_{k,A} \psi_{Z}(Z_{n}(T-)+1).
\end{equation}
In other words, removing the truncation by $\ellv$ in the integral and replacing $q_{k,A,n}^{*}$ (which is
concentrated on the shortest queues in $A$) by $q_{k,A,n}$
can only increase this term.  Note that this is the only place in the first six sections of the paper 
where the JSQ property is employed.
It follows from the bounds (\ref{eq4.15.1}) and (\ref{eq4.15.2}) that
\begin{equation}
\label{eq4.16.1}
\begin{split}
& E_{x}[\|X(T)\|_L] - \|X(T-)\|_L \\ 
& \quad \le (1 + \epsb) \sum_{A} \sum_{n} p_{k,A} q_{k,A,n} \stackrel{\circ}{m}_{k,A} 
(\psi_{Z} (Z_n (T-) + 1) + \!\stackrel{\circ}{m}\!\!\mbox{}^{\text{ratio}} \epsc).
\end{split}
\end{equation}

For $\|\cdot\|_R$, it follows from (\ref{eq4.6.1}) and (\ref{eq4.6.2}) that
\begin{equation}
\label{eq4.16.2}
\begin{split}
& E_{x}[\|X(T)\|_R] - \|X(T-)\|_R \\ 
& \quad = \sum_{A} \sum_{n} p_{k,A} q_{k,A,n}^{*} \stackrel{\circ}{m}_{k,A} 
\int_{0}^{\infty}\psi_{W} (\mu_{j}y) F_j (dy) \\
& \quad \le \epsd \sum_{A} p_{k,A} \stackrel{\circ}{m}_{k,A}.
\end{split}
\end{equation}

On the other hand, it follows from (\ref{eq4.8.1}) that
\begin{equation}
\label{eq4.16.3}
\begin{split}
& E_{x}[\|X(T)\|_A] - \|X(T-)\|_A \\ 
& \quad \le  (1 + \epsa /2)(\epsc /\ellv) M_1 \sum_{A} p_{k,A} \stackrel{\circ}{m}_{k,A} \\
& \qquad  -(1 - \epsf)(1 + \epsa /2) \sum_{A} \sum_{n} p_{k,A} q_{k,A,n} \stackrel{\circ}{m}_{k,A} 
\psi_{Z} (Z_n (T-) + 1). 
\end{split}
\end{equation}
In the first term, the factors $\epsc / \ellv$ and $M_1$ are due to (\ref{eq4.15.0}) and (\ref{eq4.8.2}),
since $\psi_A (0) = M_1$, and in the second term, the factor $1 - \epsf$ is due to (\ref{eq4.8.4}).

Combining (\ref{eq4.16.1})-(\ref{eq4.16.3}), it follows that 
\begin{equation*}
\begin{split}
& E_{x}[\|X(T)\|] - \|X(T-)\| \le 
(4 M_1 \!\stackrel{\circ}{m}\!\!\mbox{}^{\text{ratio}} \epsc + \epsd) \sum_A p_{k,A} \stackrel{\circ}{m}_{k,A} \\
& \quad  -[(1 - \epsf)(1 + \epsa /2) - (1 + \epsb)] 
\sum_{A} \sum_{n} p_{k,A} q_{k,A,n} \stackrel{\circ}{m}_{k,A} 
\psi_{Z} (Z_n (T-) + 1). 
\end{split}
\end{equation*}
Since $\psi_Z (y) \ge \epsa$ for all $y$, it follows from (\ref{eq4.13.0}) that this
is at most
\begin{equation*}
-(\epsa^{2} / 8) \sum_A p_{k,A} \stackrel{\circ}{m}_{k,A} \, \le 0.
\end{equation*}
So
\begin{equation*}
E_{x}[\|X(T)\|] \le \|X(T-)\|,
\end{equation*}
as desired.
\end{proof}
The following upper bound is a consequence of Propositions \ref{prop4.10.2} and \ref{prop4.13.1} and the strong Markov
property.  It will be applied in the proof of Theorem \ref{thm4.19.3}.
\begin{cor}
\label{cor4.18.1}
For every subcritical JSQ network,
\begin{equation}
\label{eq4.18.2}
E_x [\|X(t)\|] - \|x\| \le \bbb t \qquad \text{for all t and x},
\end{equation}
where $\bbb = \sum_n \stackrel{\circ}{\mu}_n$.
\end{cor}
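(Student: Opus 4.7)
The plan is to combine the derivative bound of Proposition \ref{prop4.10.2} with the non-positive expected jump at arrival epochs from Proposition \ref{prop4.13.1}, then iterate via the strong Markov property.

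First, from (\ref{eq4.10.5}), using $\psi_{Z} \ge 0$ and $\epsa = 1 - \rho_{i} < 1$, I would deduce the crude pathwise bound
\[\|X(t)\|^{\prime} \le \epsa\,\bbb \le \bbb \qquad \text{for almost every } t,\]
valid in between arrivals (where $\|X(\cdot)\|$ evolves deterministically, with only downward jumps at service completions). Integrating, $\|X(\cdot)\|$ grows by at most $\bbb$ times the length of any arrival-free interval.

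Second, let $T_1 < T_2 < \cdots$ denote the successive arrival epochs across all $K$ streams; these are $(\mathcal{F}_t)$-stopping times and $T_k \to \infty$ almost surely, since every $G_k$ has a finite positive mean. Proposition \ref{prop4.13.1}, combined with the strong Markov property applied at $T_k$, yields
\[E_x\bigl[\|X(T_k)\| \,\bigm|\, \mathcal{F}_{T_k-}\bigr] \le \|X(T_k-)\|,\]
so each arrival contributes a non-positive expected jump to the norm.

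Fixing $t \ge 0$ and introducing the truncated stopping times $\tau_j = T_j \wedge t$, I would iterate the two ingredients over the (almost surely finitely many) arrivals in $[0,\tau_j]$: the derivative bound on each piecewise-deterministic segment, alternated with the expected non-increase at each arrival epoch, yields by induction on $j$
\[E_x[\|X(\tau_j)\|] \le \|x\| + \bbb\,E_x[\tau_j] \le \|x\| + \bbb\,t.\]
Since $T_j \to \infty$ almost surely, on each sample path $\tau_j = t$ for all $j$ large enough, so $\|X(\tau_j)\| \to \|X(t)\|$ almost surely, and Fatou's lemma then delivers $E_x[\|X(t)\|] - \|x\| \le \bbb\,t$, as required. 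The main technical step is justifying the induction: one must establish integrability of $\|X(T_k)\|$ at each stage — which follows from the deterministic bound above together with the finite means of the $F_j$ controlling the expected jump at an arrival — and the precise conditional form of Proposition \ref{prop4.13.1} delivered by the strong Markov property at $T_k$. Both are standard in the piecewise-deterministic framework set up in Section 2 (cf.\ Davis \cite{D}).
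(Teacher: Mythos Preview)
Your proposal is correct and follows essentially the same route as the paper's proof: both decompose $[0,t]$ at the successive arrival epochs $T_1 < T_2 < \cdots$, use the derivative bound (\ref{eq4.10.5}) with $\psi_Z \ge 0$ on each inter-arrival segment, invoke Proposition \ref{prop4.13.1} at each arrival via the strong Markov property, and then sum the telescoping increments over $T_i \wedge t$. The only cosmetic difference is that the paper passes to the limit by noting that only finitely many arrivals occur in $[0,t]$ (so the telescoping sum is finite), whereas you phrase the same step via Fatou; both are fine.
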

\begin{proof}
Denoting by $T_1,T_2,\ldots$ the times at which arrivals occur and applying the strong Markov property, one can 
repeatedly apply Propositions \ref{prop4.10.2} and \ref{prop4.13.1} over the intervals $(0, T_1 \wedge t], 
(T_1 \wedge t, T_2 \wedge t],\ldots$.  Over each such interval, it follows from (\ref{eq4.10.5}) and (\ref{eq4.14.1})
that
\begin{equation}
\label{eq4.19.1}
\begin{split}
& E_x [\|X(T_{i+1} \wedge t)\|] - E_x [\|X(T_{i} \wedge t)\|] \\ 
& \quad \le \left( \sum_n \stackrel{\circ}{\mu}_n \right) E_x [(T_{i+1} \wedge t) - (T_{i} \wedge t)]
\end{split}
\end{equation}
for each $i$, since $\psi_{Z}(y) > 0$ for all $y$.  Summing over $i$ gives
\begin{equation*}
E_x [\|X(t)\|] - \|x\| \le t \sum_n \stackrel{\circ}{\mu}_n,
\end{equation*}
and hence (\ref{eq4.18.2}).
\end{proof}
\section{Proof of Theorem \ref{thm1.13.1}}

In this section, we demonstrate Theorem \ref{thm1.13.1}.  The proof is organized as follows.  We first show it suffices
to demonstrate Theorem \ref{thm4.19.3}, which is a slight variant of Theorem \ref{thm1.13.1}.  
The demonstration of Theorem \ref{thm4.19.3} is then reduced to showing Proposition \ref{prop4.22.4}.  The inequality 
in the proposition is expressed in terms of expected values 
of the norm $\|\cdot\|$ at stopping times $\sigma$ that will be
introduced shortly.  Most of the rest of the section is devoted to showing Proposition \ref{prop4.22.4}, for which Propositions 
\ref{prop4.10.2} and \ref{prop4.13.1} are employed.  At the end of the section, we briefly discuss a simpler 
alternative approach that can be applied to certain service disciplines; we also mention analogs of Theorem \ref{thm1.13.1}
where the state space is modified. We then discuss the stability of the JLLQ rule.

In order to demonstrate Theorem \ref{thm1.13.1}, we need to verify the inequality (\ref{eq1.13.2}).  In Theorem \ref{thm4.19.3},
we will instead demonstrate the variant (\ref{eq4.19.4}).  We recall that
\begin{equation*}
\tau_M = \text{inf} \{t: \|X(t)\| \le M\}.
\end{equation*}
\begin{thm}
\label{thm4.19.3}
For each subcritical queueing network satisfying (\ref{eq1.11.2a}), there exists $M$ so that
\begin{equation}
\label{eq4.19.4}
E_x [\tau_M] \le \ccc \|x\| \qquad \text{for all x},
\end{equation}
where $\|x\|$ is the norm given in (\ref{eq1.11.3}) and 
$\ccc = \left(\epsa\sum_n \stackrel{\circ}{\mu}_n\right)^{-1}$.
\end{thm}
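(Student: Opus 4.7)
The plan is to treat $\|X(t)\|$ as a Lyapunov function and to deduce (\ref{eq4.19.4}) from an approximate supermartingale inequality of the form
\[
E_x[\|X(t\wedge\tau_M)\|] - \|x\| \;\le\; -\ccc^{-1}\, E_x[t\wedge\tau_M],
\]
valid for $M$ sufficiently large. Once such an inequality is in place, letting $t\to\infty$ and using $\|X(\tau_M)\|\ge 0$ yields $E_x[\tau_M]\le\ccc\|x\|$. The coefficient $\ccc^{-1}=\epsa\sum_n\stackrel{\circ}{\mu}_n$ is exactly the bound on $-\|X(t)\|'$ that one reads off (\ref{eq4.10.5}) in the regime where $\psi_Z(Z_n(t))$ is comparable to its ceiling $\ellx$ for enough indices $n$; the role of Proposition \ref{prop4.22.4} would be to upgrade this pointwise bound into an integrated estimate on $E_x[\|X(\sigma)\|]-\|x\|$ at a suitable stopping time $\sigma$.

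The two main ingredients from the previous subsection are Proposition \ref{prop4.10.2}, which controls the deterministic evolution of $\|X(t)\|$ between arrivals, and Proposition \ref{prop4.13.1}, which controls the expected jump at an arrival. Between arrivals, (\ref{eq4.10.5}) already delivers the desired downward drift whenever $\|X(t)\|_L$ is the principal contributor to $\|X(t)\|$, since then enough queues have $\psi_Z(Z_n(t))\ge 2$. The other two cases, dominant $\|X(t)\|_R$ and dominant $\|X(t)\|_A$, are handled by the strengthenings of (\ref{eq4.10.5}) flagged just after Proposition \ref{prop4.10.2}: when $\|X(t)\|_R$ is large, the $\tfrac{1}{2}\|X(t)\|_R'$ piece of (\ref{eq4.10.3}) contributes an extra $\psi_W'$-sized drift via the definition (\ref{eq4.6.1}); when $\|X(t)\|_A$ is large, taking $\mathcal K$ in (\ref{eq4.10.4}) to omit those streams $k$ for which $S_k(t)$ is large and exploiting $\psi_A'\nearrow\infty$ supplies extra negative drift in the corresponding $\|X(t)\|_{A,k}'$. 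I would package these three estimates as the cases of Proposition \ref{prop4.22.4}.

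Once Proposition \ref{prop4.22.4} is in hand, Theorem \ref{thm4.19.3} assembles cleanly. Over each inter-arrival interval starting from a state with $\|x\|>M$, the proposition bounds $E_x[\|X(\sigma)\|]$ by $\|x\|-\ccc^{-1}E_x[\sigma]$, and at the arrival itself Proposition \ref{prop4.13.1} contributes nothing to the expected norm. Chaining via the strong Markov property over successive such intervals, and stopping at $\tau_M$, will yield the supermartingale inequality displayed above. The reduction of Theorem \ref{thm1.13.1} to Theorem \ref{thm4.19.3} is then standard: after bounding $E_x[\tau_M]$ by (\ref{eq4.19.4}), one compares $\tau_M$ to $\tau_M(1)$ by running for one extra unit of time and using Corollary \ref{cor4.18.1} to control the intervening growth of $\|X(\cdot)\|$, which adjusts the constant but preserves the linear-in-$\|x\|$ form.

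The step I expect to be the main obstacle is the derivation of the $\|X(t)\|_A$-dominated and $\|X(t)\|_R$-dominated cases of Proposition \ref{prop4.22.4}. In those regimes there is no help from the $\psi_Z(Z_n(t))$ terms, so the downward drift must be extracted deterministically from the fast-growing tails of $\psi_A$ and $\psi_W$ during a window containing few arrivals. This is precisely where (\ref{eq1.11.2a}) enters: it gives, uniformly in $x$, a positive lower bound on the probability of observing at most $\Gamma$ potential arrivals at each queue over a window of length proportional to $m^{\mathrm{max}}t$, so that on the corresponding event the deterministic decrease of $\psi_A(S_k(t))$ or $\psi_W(W_{n,i}(t))$ can accumulate long enough to dominate the increase of the other components. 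Calibrating $\ellx$ (and hence $\ellv$, $\ellw$) in the definition of $\|\cdot\|$ so that this window length and decrease are properly balanced against the possible growth at the stopping-time boundary is the delicate point.
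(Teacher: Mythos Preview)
Your overall architecture matches the paper's: decompose according to which of $\|x\|_L$, $\|x\|_A$, $\|x\|_R$ is large, obtain an inequality of the form $E_x[\sigma]\le\ccc(\|x\|-E_x[\|X(\sigma)\|])$ in each case (Proposition \ref{prop4.22.4}), and then iterate via the strong Markov property to reach $\tau_M$. Your description of case (a) and the iteration step is accurate.

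Two corrections are needed. First, case (b) (large $\|x\|_A$) does \emph{not} use (\ref{eq1.11.2a}) at all. The stopping time there is $\sigma=\inf\{t:\|X(t)\|_A\le M_A\}\wedge T$, so one works entirely on a single deterministic inter-arrival interval. Since $S_k(t)$ decreases linearly and $\psi_A'(y)\nearrow\infty$, when some $S_k(t)$ is large the term $\|X(t)\|_{A,k}'$ is already so negative that it overwhelms the other components of $\|X(t)\|'$; see (\ref{eq4.26.2})--(\ref{eq4.28.1}). No probabilistic window is required.

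Second, and more substantively, your account of case (c) misses the real obstruction and hence the real role of (\ref{eq1.11.2a}). You write that on a window with few arrivals ``the deterministic decrease of $\psi_W(W_{n,i}(t))$ can accumulate.'' But for a general non-idling discipline there is no reason the job with large $W_{n,i}$ receives \emph{any} service over the window---the discipline may devote all effort to other jobs. The paper's mechanism is different: one first bounds $z_{n_x}$ using $\|x\|_L\le M_L$, then identifies (Lemma \ref{lem4.31.2}) the smallest rank $i_x$ in the ordered residual times at which $w'_{n_x,i_x}$ exceeds a rapidly growing threshold $w(i_x)$. On the event $B_x$ from (\ref{eq1.11.2a}) (at most $\Gamma$ potential arrivals at $n_x$, each with bounded service), the total work in jobs of rank $<i_x$ plus the new arrivals is at most $t_x-m^{\max}$, so by non-idling these jobs are all cleared by time $t_x-m^{\max}$, \emph{forcing} at least $m^{\max}$ units of service on some job of rank $\ge i_x$. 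That, combined with the size of $\psi_W'(w(i_x)-1)$ calibrated in (\ref{eq4.30.1}), yields the large drop in $\|X\|_R$. Note also that in this case $\sigma=t_x\wedge T'_{x,\Gamma+1}$ may span many network arrivals, so your picture of chaining purely over single inter-arrival intervals does not apply here; one needs the half-$\|X\|_R$ device in (\ref{eq4.35.1})--(\ref{eq4.36.2}) to separate the $B_x$-contribution from the rest.
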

The inequality (\ref{eq1.13.2}) follows quickly from Theorem \ref{thm4.19.3} and Corollary \ref{cor4.18.1}.
By (\ref{eq4.18.2}), 
\begin{equation*}
E_x[\|X(1)\|] \le \|x\| + \bbb \qquad \text{for all } x,
\end{equation*}
where $\bbb = \sum_n \stackrel{\circ}{\mu}_n$.  Restarting the process at time $1$ and applying (\ref{eq4.19.4}) to
$x^{\prime} = X(1)$ implies that 
\begin{equation*}
E_x[\tau_M (1)] \le \ccc (\|x\| + \bbb) + 1, 
\end{equation*}
which implies (\ref{eq1.13.2}) with $\aaa = \ccc \vee (\bbb \ccc +1)$.

The proof of Corollary \ref{cor4.18.1} did not require any conditions on the evolution of $X(\cdot)$.
In order to demonstrate Theorem \ref{thm4.19.3}, we need to consider the behavior of $X(\cdot)$ when
its norm is large.  If $Z_n (\cdot)$ is uniformly large over an interval for some $n$, we will be able
to apply (\ref{eq4.10.5}) of Proposition \ref{prop4.10.2}.  If either $\|X(\cdot)\|_R$ or $\|X(\cdot)\|_A$
is large, we will be able to employ versions of (\ref{eq4.10.3}) and (\ref{eq4.10.4}).  In each of these cases,
we also apply Proposition \ref{prop4.13.1}.  Iteration of these
bounds and application of the strong Markov property as in the proof of the corollary will then imply the
theorem.  

In order to demonstrate (\ref{eq4.19.4}), we need only consider $\|x\| > M$.  On account of (\ref{eq1.11.3}),
$\|x\| >M$ implies that, for given $M_L$, $M_R$ and $M_A$ with
\begin{equation}
\label{eq4.21.1}
M = M_L + M_R + M_A,
\end{equation}  
either (a) $\|x\|_L > M_L$, (b) $\|x\|_L \le M_L$ and $\|x\|_A > M_A$, or 
(c) $\|x\|_L \le M_L$, $\|x\|_A \le M_A$ and
$\|x\|_R > M_R$.  We will analyze these three cases for appropriate $M_L$, $M_A$ and $M_R$.

Denote by $T$ the time of the first arrival in the network.  We introduce the stopping time $\sigma$, where
\begin{equation}
\label{eq4.22.1}
\sigma = \text{inf} \,\{t: \|X(t)\|_L \le M_L \} \wedge T
\end{equation}
for $x$ satisfying (a) and
\begin{equation}
\label{eq4.22.2}
\sigma = \text{inf} \,\{t: \|X(t)\|_A \le M_A \} \wedge T
\end{equation}
for $x$ satisfying (b).  For $x$ satisfying (c), we set
\begin{equation}
\label{eq4.22.3}
\sigma = t_x \wedge T_{x,\Gamma +1}^{\prime}.
\end{equation}
Here, $t_x$ is deterministic and will be defined in (\ref{eq4.33.1}); $\Gamma$ is given in (\ref{eq1.11.2a}).
(As mentioned there, $\Gamma = 0$ for many applications.)  The term $T_{x,i}^{\prime}$ is the time of the
$i^{\text{th}}$ arrival at the queue $n_x$, with $n_x$ being specified just before (\ref{eq4.33.1}).
We also set
\begin{equation*}
\stackrel{\circ}{\mu}\!\!\mbox{}^{\text{ratio}} = \begin{cases}
\max_n \!\stackrel{\circ}{\mu}_n / \min_n \!\stackrel{\circ}{\mu}_n   & \quad \text{for class independent networks}, \\
1 & \quad \text{for station independent networks}, 
\end{cases}
\end{equation*}
and recall $\ellx$, which was introduced in (\ref{eq4.6.3}).

We will show
\begin{prop}
\label{prop4.22.4}
For each subcritical JSQ network satisfying (\ref{eq1.11.2a}) and for $\ellx$ satisfying 
$\ellx \ge 4N \stackrel{\circ}{\mu}\!\!\mbox{}^{\text{ratio}}$, 
there exist $M_L$, $M_R$ and $M_A$, so that
for $x$ satisfying either (a), (b) or (c), and $\sigma$ chosen as in (\ref{eq4.22.1})-(\ref{eq4.22.3}), 
\begin{equation}
\label{eq4.23.1}
E_x[\sigma] \le \ddd (\|x\| - E_x [\|X(\sigma-)\|]),
\end{equation}
where $\ddd$ is as in Theorem \ref{thm4.19.3}.
\end{prop}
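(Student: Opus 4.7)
The unified strategy is a Foster--Lyapunov drift bound: on $[0,\sigma)$ I will show that $\|X(t)\|$ decreases in expectation at rate at least $\epsilon_1 \sum_n \stackrel{\circ}{\mu}_n$. Integrating this (and noting that jumps of $\|\cdot\|$ at departures are nonpositive) yields
\begin{equation*}
E_x[\|X(\sigma-)\|] - \|x\| \le -\epsilon_1 \sum_n \stackrel{\circ}{\mu}_n \, E_x[\sigma],
\end{equation*}
which is exactly \eqref{eq4.23.1}. In cases (a) and (b) we have $\sigma \le T$, so the evolution on $[0,\sigma)$ is deterministic given $x$ and the expectations collapse; case (c) iterates the drift estimate across the arrival epochs at the distinguished queue $n_x$, using the strong Markov property and Proposition \ref{prop4.13.1} to absorb each arrival.

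\textbf{Cases (a) and (b).} For case (a), since $\psi_Z \le \ellx$ and $w^{+} \wedge \ellv \le \ellv$, a crude bound gives $\|x\|_L \le N \ellx \ellv \ellw \max_{k,A}\stackrel{\circ}{m}_{k,A}$ whenever every $z_n \le \ellw$. Choosing $M_L$ above this threshold forces some $Z_{n(t)}(t) > \ellw$ for each $t \in [0,\sigma)$, so $\psi_Z(Z_{n(t)}(t)) = \ellx$. The hypothesis $\ellx \ge 4 N \stackrel{\circ}{\mu}\!\!\mbox{}^{\text{ratio}}$ then gives $\sum_n \stackrel{\circ}{\mu}_n \psi_Z(Z_n(t)) \ge \min_n \stackrel{\circ}{\mu}_n \cdot \ellx \ge 4 \sum_n \stackrel{\circ}{\mu}_n$, and inserting into \eqref{eq4.10.5} delivers the required drift. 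For case (b), the constraint $\|x\|_L \le M_L$ uniformly caps $\sum_A \sum_n p_{k,A} q_{k,A,n}\stackrel{\circ}{m}_{k,A}\psi_Z(z_n)$ by a constant depending on $M_L$, so $\|x\|_A > M_A$ forces some stream $k^*$ to have $s_{k^*}$ so large that $\psi_A'(s_{k^*})$ exceeds any prescribed value, which is ensured by taking $M_A$ large enough. I would then combine \eqref{eq4.10.3} with \eqref{eq4.10.4} restricted to $\mathcal{K} = \{k: s_k(t) \le M_1\}$ and separately append the strongly negative contribution $-(1+\epsa/2)\alpha_{k^*}\psi_A'(s_{k^*})\cdot(\text{positive weight})$ to $\|X\|_{A,k^*}'$; the latter dominates the positive contributions and yields a drift below $-\epsilon_1 \sum_n \stackrel{\circ}{\mu}_n$.

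\textbf{Case (c).} I would let $n_x$ be a queue with largest $\|x\|_{R,n}$, so that $\|x\|_{R,n_x} \ge M_R/N$. Since $\|x\|_L \le M_L$ caps the number of jobs at $n_x$ (each contributes at least $\epsilon_1 \epsilon_2 \min_{k,A}\stackrel{\circ}{m}_{k,A}$ to $\|x\|_L$), some job $(n_x,i^*)$ must have $w_{n_x,i^*}$ so large that $\psi_W'(w_{n_x,i^*})$ exceeds a prescribed threshold; here the superlinear growth $\psi_W'(y) \nearrow \infty$ is crucial. Between arrivals at $n_x$, the strengthened form of \eqref{eq4.10.3} retaining $\tfrac{1}{2}\|X\|_R'$ then yields drift at least as negative as $-C \psi_W'(w_{n_x,i^*}(t))$, dominating $-\epsilon_1 \sum_n \stackrel{\circ}{\mu}_n$. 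I would set $t_x$ so that this rapid decrease accumulates to the required amount over $[0,t_x]$, invoke \eqref{eq1.11.2a} to lower-bound by $h(t_x)$ the probability of at most $\Gamma$ potential arrivals at $n_x$ during $[0,t_x]$, and iterate across the at most $\Gamma+1$ arrival epochs using the strong Markov property together with Proposition \ref{prop4.13.1}, which keeps the integrated drift estimate intact across each jump.

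\textbf{Main obstacle.} The delicate case is (c): one must balance the deterministic decrease of $\|X\|_R$ along a single large-$w$ job against the random arrivals at $n_x$ that can partially undo it. Choosing $t_x$ and $M_R$ so that the deterministic drift is sufficient while $h(t_x)$ from \eqref{eq1.11.2a} remains nontrivial, and then propagating Proposition \ref{prop4.13.1} cleanly through the up to $\Gamma+1$ arrival epochs while preserving the integrated estimate, is the technical crux of the argument.
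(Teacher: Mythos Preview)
Your cases (a) and (b) are essentially correct and match the paper's argument (in (b), the bound on $\sum_{A,n} p_{k,A} q_{k,A,n}\stackrel{\circ}{m}_{k,A}\psi_Z(z_n)$ comes simply from $\psi_Z\le\ellx$, not from $\|x\|_L\le M_L$, but this is harmless).

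Case (c) has a genuine gap. You assert that between arrivals at $n_x$ the strengthened drift from $\tfrac12\|X\|_R'$ is ``at least as negative as $-C\psi_W'(w_{n_x,i^*}(t))$''. This is false for a general non-idling discipline: the contribution of job $(n_x,i^*)$ to $\|X\|_R'$ is $-\stackrel{\circ}{\mu}_{n_x} R_{n_x,i^*}(t)\,\psi_W'(W_{n_x,i^*}(t))$, and nothing prevents $R_{n_x,i^*}(t)=0$. The discipline may devote all service at $n_x$ to jobs with small $w$, for which $\psi_W'$ is tiny, leaving the large job untouched; then $\|X\|_R'$ is essentially zero and your ``rapid decrease'' never materializes. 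Your argument would go through for PS or for FIFO with suppressed future service times (the paper remarks this after the proof), but not for arbitrary disciplines.

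The paper's remedy is to choose $t_x$ long enough to \emph{force} service of a large job. Order the residuals at $n_x$ increasingly as $w'_{n_x,1}\le\cdots\le w'_{n_x,z_{n_x}}$ and build a rapidly growing sequence $w(i)$ with $t(i)=\sum_{\ell\le i}w(\ell)+2\Gamma$ and $p(i)=h(t(i)+1)$, calibrated so that $\psi_W'(w(i)-1)$ dominates $t(i-1)/p(i-1)$ (equation \eqref{eq4.30.1}). By a pigeonhole argument (Lemma \ref{lem4.31.2}), some index $i_x$ has $w'_{n_x,i_x}>w(i_x)$; set $t_x=m^{\text{max}}(t(i_x-1)+1)$. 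On the event $B_x$ (at most $\Gamma$ potential arrivals at $n_x$ by $t_x$, each with service time $\le 2m^{\text{max}}$; probability $\ge 2^{-\Gamma}p(i_x-1)$), the total work in jobs with index $<i_x$ plus the arrivals is at most $t_x-m^{\text{max}}$, so those jobs must all complete, and the remaining service (at least $m^{\text{max}}$) necessarily goes to jobs with $w\ge w(i_x)-1$. The resulting drop in $\|X\|_{R,n_x}$, of order $\psi_W'(w(i_x)-1)$, multiplied by $P_x(B_x)$, is by the calibration large enough to dominate both the length $t_x$ and the crude bound $\sum_n\stackrel{\circ}{\mu}_n$ on the drift off $B_x$ (handled via the auxiliary quantity $\|X\|-\tfrac12\|X\|_{R,n_x}\mathbf{1}\{B_x\}$). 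The missing idea is precisely this: $t_x$ is not chosen so that a continuously accruing rapid decrease reaches a target, but so that all \emph{smaller} jobs at $n_x$ are exhausted first, after which service of the large ones becomes unavoidable.
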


Recall that $\|X(\cdot)\|$ has negative jumps at departures.  It therefore follows from Proposition \ref{prop4.13.1} 
and the strong Markov property that
\begin{equation}
\label{eq4.23.2}
E_x[\|X(\sigma)\|] \le E_x [\|X(\sigma-)\|] \qquad \text{for all } x.
\end{equation}
Together with (\ref{eq4.23.1}), this implies
\begin{equation}
\label{eq4.23.3}
E_x[\sigma] \le \ddd (\|x\| - E_x [\|X(\sigma)\|])
\end{equation}
for $x$ chosen as in the proposition.

It is not difficult to demonstrate Theorem \ref{thm4.19.3} by iterating (\ref{eq4.23.3}) and applying the
strong Markov property.
\begin{proof}[Proof of Theorem \ref{thm4.19.3} using (\ref{eq4.23.3})]
Iteration of (\ref{eq4.23.3}), by applying the stopping rule $\sigma$ at each step, induces a sequence of
stopping times
\begin{equation*}
0 < \sigma_1 < \sigma_2 < \ldots,
\end{equation*}
with the sequence stopping at $\sigma_I$ if $\|X(\sigma_I)\| \le M$.  Repeated application of (\ref{eq4.23.3}),
together with the strong Markov property, implies that for each $i \le I$,
\begin{equation}
\label{eq4.24.2}
E_x[\sigma_i] \le \ddd (\|x\| - E_x [\|X(\sigma_i)\|]) \le \ddd\|x\|
\end{equation}
for all $x$.  (The sum of the bounds obtained from the right side of (\ref{eq4.23.3}) forms a telescoping
series.)  On the other hand, over every finite time interval, there are only a finite number of arrivals
and, in between arrivals, only the norm $\|\cdot\|_A$ can increase; hence only a finite number of stopping
times can occur over a finite interval.  It therefore follows from (\ref{eq4.24.2}) and $\tau_M \le \sigma_I$
that $\sigma_I < \infty$ almost surely, with
\begin{equation}
\label{eq4.24.3}
E_x[\tau_M] \le E_x[\sigma_I] \le \ddd \|x\| \qquad \text{for all } x.
\end{equation} 
This implies (\ref{eq4.19.4}) of Theorem \ref{thm4.19.3}.
\end{proof}

Most of the rest of this section is devoted to demonstrating Proposition \ref{prop4.22.4}.  To do so, we consider
separately the cases (a), (b) and (c) that are given after (\ref{eq4.21.1}).  Cases (a) and (b) will be easy to show;
case (c) requires more effort.  We employ the notation
\begin{equation}
\label{eq4.25.1}
\stackrel{\circ}{m}\!\!\mbox{}^{\text{max}} = \text{max}_{k,A} \stackrel{\circ}{m}_{k,A}.
\end{equation} 
For later use, we also set 
$\stackrel{\circ}{m}\!\!\mbox{}^{\text{min}} = \text{min}_{k,A} \stackrel{\circ}{m}_{k,A}$ and
$\,\stackrel{\circ}{\mu}\!\!\mbox{}^{\text{min}} = \text{min}_{n} \stackrel{\circ}{\mu}_{n}$.
\begin {proof}[Proof of Case (\textnormal{a}) of Proposition \ref{prop4.22.4}]
Under the condition (a), for each $t \in [0, \sigma)$, there exists an $n(t)$ so that
$\|X(t)\|_{L,n(t)} > M_L / N$, and hence by the definition of $\|\cdot\|_{L,n}$ and by (\ref{eq4.7.1}),
\begin{equation*}
Z_{n(t)} (t) > M_L \big/ \left(\stackrel{\circ}{m}\!\!\mbox{}^{\text{max}} \ellv N \psi_Z (Z_{n(t)}(t))\right)
\ge M_L \big/\left(\stackrel{\circ}{m}\!\!\mbox{}^ \text{max} \ellx \ellv N\right).
\end{equation*} 
Setting
\begin{equation}
\label{eq4.25.2}
M_L = \stackrel{\circ}{m}\!\!\mbox{}^{\text{max}} \ellx \ellv \ellw  N,
\end{equation}
it follows that $Z_{n(t)} (t) > \ellw$, and hence $\psi_Z (Z_{n(t)} (t)) = \ellx$. Since 
$\ellx \ge 4N\stackrel{\circ}{\mu}\!\!\mbox{}^{\text{ratio}}$, it 
follows from (\ref{eq4.10.5}) of Proposition \ref{prop4.10.2} that, almost everywhere on $(0, \sigma)$, 
\begin{equation*}
\begin{split}
\|X(t)\|^{\prime} & \le -(\epsa /2) \sum_n \stackrel{\circ}{\mu}_n (\psi_Z (Z_n (t)) - 2) \\
& \le 
- \epsa \sum_n \stackrel{\circ}{\mu}_n.
\end{split}
\end{equation*}
This implies case (a) of (\ref{eq4.23.1}).
\end{proof} 

We next demonstrate case (b).

\begin {proof}[Proof of Case (\textnormal{b}) of Proposition \ref{prop4.22.4}]
Under the condition (b), for each $t \in [0,\sigma)$, there exists a $k(t)$ so that $\|X(t)\|_{A,k(t)} > M_A /K$,
and hence by the definition of $\|\cdot\|_{A,k}$ and by (\ref{eq4.7.1}),
\begin{equation}
\label{eq4.26.1}
\psi_A (S_{k(t)} (t)) > M_A {\Big/} \left(2\!\stackrel{\circ}{m}\!\!\mbox{}^{\text{max}}\ellx K\right).
\end{equation} 
Choose $y_1 > M_1$ large enough so that
\begin{equation}
\label{eq4.26.2}
\psi_{A}^{\prime} (y_1) \ge 2 \left(\sum_n \stackrel{\circ}{\mu}_n\right){\Big /} 
\min_{k,A}(\alpha_k \stackrel{\circ}{m}_{k,A})
\end{equation}
and $\psi_{A}^{\prime} (y_2) \ge \psi_{A}^{\prime} (y_1)$ for $y_2 > y_1$;
this is possible since $\psi_{A}^{\prime} (y) \nearrow \infty$ as $y \nearrow \infty$.  Setting
\begin{equation}
\label{eq4.26.3}
M_A = 2\stackrel{\circ}{m}\!\!\mbox{}^{\text{max}}\ellx K \psi_A (y_1),
\end{equation}
it follows, from (\ref{eq4.26.1}), that $\psi_A (S_{k(t)} (t)) > \psi_A (y_1)$ for each $t$, and hence that
$S_{k(t)} (t) > y_1$ and $\psi_{A}^{\prime} (S_{k(t)} (t)) \ge \psi_{A}^{\prime} (y_1)$.  For $M_A$ as in (\ref{eq4.26.3}),
differentiation of $\|X(\cdot)\|_{A,k}$ using (\ref{eq4.8.1}) and the lower bound $\epsa$ for 
$\psi_{Z}(y)$ therefore imply that, at $k = k(t)$ with $t\in [0,\sigma)$,
\begin{equation}
\label{eq4.27.1}
\|X(t)\|_{A,k}^{\prime} \le -\epsa \sum_{A} \sum_{n} p_{k,A} q_{k,A,n} 
\alpha_k \stackrel{\circ}{m}_{k,A} \psi_{A}^{\prime} (y_1),
\end{equation}
which by (\ref{eq4.26.2}) is at most $-2 \epsa \sum_n \stackrel{\circ}{\mu}_n$.

We apply (\ref{eq4.10.4}) of Proposition \ref{prop4.10.2}, with $\mathcal{K}$ equal to the complement of $\{k(t)\}$.  Adding
(\ref{eq4.10.3}) to (\ref{eq4.10.4}), one obtains the analog of (\ref{eq4.10.5}), but with the additional term
inherited from (\ref{eq4.27.1}), namely, almost everywhere on $[0,\sigma)$,
\begin{equation}
\label{eq4.28.1}
\|X(t)\|^{\prime} \le -(\epsa / 2) \sum_n \stackrel{\circ}{\mu}_{n}\!(2 + \psi_Z (Z_n (t))) \le 
-\epsa \sum_n \stackrel{\circ}{\mu}_n.
\end{equation}
Case (b) of (\ref{eq4.23.1}) follows. 
%
\end{proof}
We now begin the argument for case (c) of Proposition \ref{prop4.22.4}.  This is the only part that requires the
condition (\ref{eq1.11.2a}).  It requires more work than the other two cases since, when $\|x\|_R$ is large,
we need sufficient service of some job $(n,i)$, with large $w_{n,i}$, to ensure a rapid decrease of $\|\cdot\|$.
Since any service discipline is allowed, such service need not occur at all, or even most, times.  We will instead show 
that for $\|x\|_L \le M_L$, with $M_L$ not too large, and $\|x\|_R > M_R$, with $M_R$ enormous, there is a small
(but, nevertheless, large enough) probability that a job $(n,i)$ with enormous $w_{n,i}$ receives sufficient
service so that the corresponding derivative $\psi_{W}^{\prime} (w_{n,i})$ induces an enormous decrease over 
$(0, \sigma)$ of $\|\cdot\|_R$, and hence of $\|\cdot\|$.  In particular, sufficient service of such a job $(n,i)$ must
occur if $(0, \sigma - m^{\text max}]$ is sufficiently long to allow complete service of all jobs at $n$ with smaller
weighted residual service times.  (Recall that $m^{\text max} = \max_{j}m_{j}$.) 

We will identify the job $(n,i)$, with ``large $w_{n,i}$", that was referred to in the last paragraph in
terms of a rapidly increasing sequence $w(1),w(2),\ldots$.  This sequence will also be used to define $t_x$,
which was used in the definition of $\sigma$ in (\ref{eq4.22.3}). We construct $w(i)$ and corresponding 
sequences $p(0), p(1), \ldots$ and $t(0), t(1), \ldots$ inductively.

Set 
\begin{equation}
\label{eq4.30.0}
p(i) = h(t(i) + 1), 
\end{equation}
where $h(\cdot)$ is given in (\ref{eq1.11.2a}).  We choose $w(i)$ and $t(i)$ so that
\begin{equation}
\label{eq4.30.1}
\psi_{W}^{\prime}(w(i) - 1) = \eee 2^{i+\Gamma+2} (t(i-1) + 1) {\Big /} p(i -1)
\end{equation}
%
%
and
\begin{equation}
\label{eq4.30.2}
t(i) = \sum_{\ell = 1} ^i w(\ell) + 2\Gamma.
\end{equation}
Here, $\eee = \sum_n \stackrel{\circ}{\mu}_n \! {\big /}
\!\stackrel{\circ}{\mu}\!\!\mbox{}^{\text{min}}$, $\psi_W (\cdot)$  and $\Gamma$ are as in (\ref{eq4.6.1}) and
(\ref{eq1.11.2a}), respectively.  (Note that, on account of (\ref{eq4.6.2}), the range of $\psi^{\prime}_{W}(\cdot)$
contains $[1,\infty)$, and so (\ref{eq4.30.1}) can always be solved for $w(i)$ and given $i$.)   
Often, $p(i)$ will
decrease very rapidly to $0$, and $w(i)$ and $t(i)$ will increase very rapidly.  The factor $2^i$ in
(\ref{eq4.30.1}) is not required for the proof of Proposition \ref{prop4.22.4}, but will be used in the next
section. Employing $w(i)$ and $M_L$, we set
\begin{equation}
\label{eq4.31.1}
M_R = \stackrel{\circ}{m}\!\!\mbox{}^{\text{max}} N \psi_W \left(\sum_{i=1}^{\lfloor \fff M_L \rfloor} w(i)\right),
\end{equation}
where $\fff = 1/ \left(\epsa \epsb\! \stackrel{\circ}{m}\!\!\mbox{}^{\text{min}}\right)$.  

The sequence $w(i)$ was defined in (\ref{eq4.30.1}) with Lemma \ref{lem4.31.2} in mind.  For the lemma, we relabel
the residual service times $w_{n,i}$, $i=1,\ldots,z_n$, in order  of increasing value at each $n$, 
employing the notation
$w_{n,1}^{\prime},w_{n,2}^{\prime},\ldots,w_{n,z_n}^{\prime}$.
We employ $k^{\prime}_{n,i}$ and $A^{\prime}_{n,i}$ for the corresponding renewal streams and selection sets.
\begin{lem}
\label{lem4.31.2}
Suppose that for given $n$, $\|x\|_{R,n} > M_R / N$ and $z_n \le \fff M_L$.  Then, for some $i=1,\ldots,z_n$, 
$w_{n,i}^{\prime} > w(i)$.  
\end{lem}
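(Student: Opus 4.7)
The plan is to argue by contradiction: assume $w_{n,i}^{\prime} \le w(i)$ for every $i=1,\ldots,z_n$, and derive a contradiction with $\|x\|_{R,n} > M_R/N$.

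First, I would simplify the hypothesis by using the uniform bound $\stackrel{\circ}{m}_{k_{n,i},A_{n,i}} \le \stackrel{\circ}{m}\!\!\mbox{}^{\text{max}}$ in the definition (\ref{eq4.6.1}) of $\|x\|_{R,n}$ together with the definition (\ref{eq4.31.1}) of $M_R$. Dividing through by $\stackrel{\circ}{m}\!\!\mbox{}^{\text{max}}$ reduces the hypothesis $\|x\|_{R,n} > M_R/N$ to
\begin{equation*}
\sum_{i=1}^{z_n} \psi_{W}(w_{n,i}) \;>\; \psi_{W}\!\left(\sum_{i=1}^{\lfloor \fff M_L \rfloor} w(i)\right),
\end{equation*}
and reordering the multiset $\{w_{n,i}\}$ as $\{w_{n,i}^{\prime}\}$ leaves the left-hand side unchanged.

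Second, I would use the defining properties of $\psi_{W}$ stated after (\ref{eq4.6.1})---namely $\psi_{W}(0)=0$ with $\psi_{W}^{\prime}$ positive and monotonically increasing to $\infty$---to conclude that $\psi_{W}$ is convex with $\psi_{W}(0)=0$, hence superadditive: $\psi_{W}(a)+\psi_{W}(b) \le \psi_{W}(a+b)$ for all $a,b \ge 0$. Since $z_n$ is an integer with $z_n \le \fff M_L$, we have $z_n \le \lfloor \fff M_L \rfloor$, and iterated superadditivity (together with $w(i)>0$ for all $i$) yields
\begin{equation*}
\sum_{i=1}^{z_n} \psi_{W}(w(i)) \;\le\; \psi_{W}\!\left(\sum_{i=1}^{z_n} w(i)\right) \;\le\; \psi_{W}\!\left(\sum_{i=1}^{\lfloor \fff M_L \rfloor} w(i)\right).
\end{equation*}

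Third, the contradiction hypothesis $w_{n,i}^{\prime} \le w(i)$ and the monotonicity of $\psi_{W}$ give $\sum_{i=1}^{z_n} \psi_{W}(w_{n,i}^{\prime}) \le \sum_{i=1}^{z_n} \psi_{W}(w(i))$. Chaining this with the previous display yields $\sum_{i} \psi_{W}(w_{n,i}) = \sum_{i} \psi_{W}(w_{n,i}^{\prime}) \le \psi_{W}(\sum_{i=1}^{\lfloor \fff M_L \rfloor} w(i))$, which directly contradicts the first display. The only mildly nontrivial ingredient is the superadditivity of $\psi_{W}$ (routine from convexity and $\psi_{W}(0)=0$); the rest is arithmetic bookkeeping, and no properties of the sequence $w(i)$ beyond positivity are needed here---the more delicate recursive choice (\ref{eq4.30.1}) enters only in the subsequent service-time analysis.
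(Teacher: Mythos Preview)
Your proposal is correct and follows essentially the same approach as the paper: argue by contradiction, bound the coefficients by $\stackrel{\circ}{m}\!\!\mbox{}^{\text{max}}$, use $z_n \le \lfloor \fff M_L \rfloor$, and invoke superadditivity of $\psi_W$ (which the paper phrases as ``convex with $\psi_W(0)=0$'') to collapse the sum inside $\psi_W$. The only cosmetic difference is that you divide through by $\stackrel{\circ}{m}\!\!\mbox{}^{\text{max}}$ at the outset, whereas the paper carries the factor along in a single chain of inequalities.
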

\begin{proof}
If $w_{n,i}^{\prime} \le w(i)$ for all $i=1,\ldots,z_n$, then
\begin{equation}
\label{eq4.32.2}
\begin{split}
\|x\|_{R,n} & = \sum_{i=1}^{z_n} \stackrel{\circ}{m}_{k^{\prime}_{n,i},A^{\prime}_{n,i}} \psi_W (w_{n,i}^{\prime}) 
\le \stackrel{\circ}{m}\!\!\mbox{}^{\text{max}} \sum_{i=1}^{\lfloor \fff M_L\rfloor} \psi_W (w(i)) \\ 
& \le \stackrel{\circ}{m}\!\!\mbox{}^{\text{max}} \psi_W \left(\sum_{i=1}^{\lfloor \fff M_L\rfloor} w(i)\right)
< \|x\|_{R,n},
\end{split}
\end{equation}
which is a contradiction.  The first inequality holds since $z_n \le \fff M_L$, the second inequality since 
$\psi_W (\cdot)$ is convex with $\psi_W (0)=0$, and the last inequality since $\|x\|_{R,n} > M_R / N$.
\end{proof}

Suppose that for a given $x\in S$, $\|x\|_{R,n} > M_R / N$ and $z_n \le \fff M_L$ for some queue $n$, which we
denote by $n_x$.  (In case of more than one such $n$, choose one of them.)  As before Lemma \ref{lem4.31.2}, denote by
$w_{n_{x},i}^{\prime}$, $i=1,\ldots,z_{n_x}$, the ordered sequence obtained from 
$w_{n_{x},i}$, $i=1,\ldots,z_{n_x}$.  We set
\begin{equation}
\label{eq4.33.1}
t_x = m^{\text{max}} (t(i_x -1) + 1),
\end{equation}
where $i_x$ is the smallest index $i$ at which $w_{n_x ,i}^{\prime} > w(i)$.  On account of Lemma \ref{lem4.31.2}, such
an index exists.  The time $t_x$ is used to define $\sigma$ in (\ref{eq4.22.3}) in case (c).  

Using the preceding construction, we now complete the proof of Proposition \ref{prop4.22.4}.
\begin{proof}[Proof of Case (\textnormal{c}) of Proposition \ref{prop4.22.4}]
Under case (c), $\|x\|_R > M_R$, and hence $\|x\|_{R,n} > M_R /N$ for some queue $n$.  Moreover, since $\|x\|_L \le M_L$,
\begin{equation}
\label{eq4.33.2}
z_n \le \|x\|_{L,n} \big/ \left(\epsa \epsb \! \stackrel{\circ}{m}\!\!\mbox{}^{\text{min}}\right) 
\le M_L \big/ \left(\epsa \epsb \!\stackrel{\circ}{m}\!\!\mbox{}^{\text{min}}\right) = \fff M_L, 
\end{equation}
with (\ref{eq4.4.1}) and (\ref{eq4.4.2}) being used for the first inequality.  So, the assumptions of Lemma \ref{lem4.31.2}
are satisfied, and $n_x$, $t_x$ and $i_x$ can be defined as immediately following the lemma.

Let $B_x$ denote the event where at most $\Gamma$ potential arrivals occur at $n_x$ by time $t_x$ and their service
times are each at most $2 m^{\text{max}}$.  One can check that
\begin{equation}
\label{eq4.33.3}
P_{x}\left(B_x \right) \ge 2^{-\Gamma} h\left(t_x /m^{\text{max}}\right) = 2^{-\Gamma} p(i_x - 1),
\end{equation}
where $h(\cdot)$ is as in (\ref{eq1.11.2a}).  For the inequality, note that the probability of a service time being at
most $2 m^{\text{max}}$ is at least $1/2$, and that the probability this occurs for all service times for up to 
$\Gamma$ jobs is at least $1/2^{\Gamma}$, after conditioning on knowing the arrival streams and selection sets of the jobs.
The equality follows from (\ref{eq4.30.0}) and (\ref{eq4.33.1}).

On $B_x$, $\sigma = t_x$.  Moreover, by time $t_x$, the total service devoted to jobs at $n_x$, with $i \ge i_x$, is
at least $m^{\text{max}}$, since the total time required to serve all initial jobs $i$, with $i < i_x$, and the at most
$\Gamma$ arriving jobs is at most
\begin{equation}
\label{eq4.34.1}
\sum_{i=1}^{i_x - 1} m_{j^{\prime}_{n_x, i}} w_{n_x, i}^{\prime} + 2 \Gamma m^{\text{max}} 
\le m^{\text{max}} \left(\sum_{i=1}^{i_x - 1} w(i) + 2\Gamma \right) = t_x - m^{\text{max}},  
\end{equation} 
for $j^{\prime}_{n,i}$ defined analogously to $k^{\prime}_{n,i}$ and $A^{\prime}_{n,i}$,
where the equality follows from (\ref{eq4.30.2}) and (\ref{eq4.33.1}).  Since $\psi_{W}^{\prime}(\cdot)$ is nondecreasing,
it follows that, on $B_x$,
\begin{equation}
\label{eq4.34.1a}
\begin{split}
\|x\|_{R,n_x} - \|X(t_x -)\|_{R,n_x} & \ge m^{\text{max}}\stackrel{\circ}{\mu}\!\!\mbox{}^{\text{min}}
\psi_{W}^{\prime} \left(w_{n_x,i_x}^{\prime} - 1 \right) \\
& \ge m^{\text{max}}\stackrel{\circ}{\mu}\!\!\mbox{}^{\text{min}}
\psi_{W}^{\prime} (w(i_x) - 1),
\end{split}
\end{equation}
Consequently, 
\begin{equation}
\label{eq4.34.2}
\begin{split}
E_{x}[\|x\|_{R,n_x} - \|X(\sigma-)\|_{R,n_x}; B_x] & = E_{x}[\|x\|_{R,n_x} - \|X(t_x -)\|_{R,n_x}; B_x] \\
& \ge m^{\text{max}}\stackrel{\circ}{\mu}\!\!\mbox{}^{\text{min}} 2^{-\Gamma} p(i_x -1) \psi_{W}^{\prime} (w(i_x) -1) \\
& = \eee \stackrel{\circ}{\mu}\!\!\mbox{}^{\text{min}} 2^{i_x + 2} t_x
\ge 4\eee \stackrel{\circ}{\mu}\!\!\mbox{}^{\text{min}} t_x,
\end{split}
\end{equation}
with the first inequality following from (\ref{eq4.33.3}) and (\ref{eq4.34.1a}), and the last 
equality following from (\ref{eq4.30.1}).

On the other hand,
\begin{equation}
\label{eq4.35.1}
\begin{split}
& E_x \left[\|x\| - \|X(\sigma-)\|\right] - \frac{1}{2}E_x \left[\|x\|_{R,n_x} -\|X(\sigma-)\|_{R,n_x}; B_{x}\right] \\
& \quad \ge - \left(\sum_n \stackrel{\circ}{\mu}_n \right) E_{x}[\sigma].
\end{split}
\end{equation}
To see this, note that, on intervals between arrivals, $\|X(t)\|_{R,n}$ is decreasing for all $n$, and so
\begin{equation}
\label{eq4.35.2}
\left[\|X(t)\| - \frac{1}{2}\|X(t)\|_{R,n_x} \mathbf{1}\{\omega \in B_x \}\right ]^{\prime}
\le \left[\|X(t)\| - \frac{1}{2}\|X(t)\|_R\right]^{\prime}
\end{equation}
almost everywhere.  But at the time $T_i$ of an arrival in the network,
\begin{equation}
\label{eq4.35.3}
\begin{split}
& \left[\|X(T_i)\| - \frac{1}{2}\|X(T_i)\|_{R,n_x} \mathbf{1}\{\omega \in B_x \}\right ] \\
& \qquad \qquad - \left[\|X(T_{i}-)\| - \frac{1}{2}\|X(T_{i}-)\|_{R,n_x} \mathbf{1}\{\omega \in B_x\}\right] \\ 
& \quad \le \|X(T_i)\| - \|X(T_{i}-)\|
\end{split}
\end{equation}
since $\|X(T_i)\|_{R,n_x} \ge \|X(T_{i}-)\|_{R,n_x}$.

One obtains
\begin{equation}
\label{eq4.36.1}
\left[\|X(t)\| - \frac{1}{2}\|X(t)\|_{R,n_x} \mathbf{1}\{\omega \in B_x \}\right ]^{\prime}
\le \sum_n \stackrel{\circ}{\mu}_n
\end{equation}
by applying (\ref{eq4.10.3}) and (\ref{eq4.10.4}) of Proposition \ref{prop4.10.2} to the right side of (\ref{eq4.35.2})
to get the analog of (\ref{eq4.10.5}) (with $\frac{1}{2}\|X(t)\|_{R}^{\prime}$, rather than $\|X(t)\|_{R}^{\prime}$
being applied in (\ref{eq4.10.3})).  Also, by applying Proposition \ref{prop4.13.1} to the conditional expectation
with respect to $\mathcal{F}_{T_{i}-}$ of the right side of (\ref{eq4.35.3}), one obtains
\begin{equation}
\label{eq4.36.2}
\begin{split}
&E_{x}\left[\|X(T_i)\| - \frac{1}{2}\|X(T_i)\|_{R,n_x} \mathbf{1}\{\omega \in B_x \}\right ] \\
& \qquad \le E_{x}\left[\|X(T_{i}-)\| - \frac{1}{2}\|X(T_{i}-)\|_{R,n_x} \mathbf{1}\{\omega \in B_x \}\right ]. 
\end{split}
\end{equation}
One then obtains (\ref{eq4.35.1}) from (\ref{eq4.36.1}) and (\ref{eq4.36.2}) and the strong Markov property by
arguing as in the proof of Corollary \ref{cor4.18.1}.

It follows immediately from (\ref{eq4.34.2}) and (\ref{eq4.35.1}) that
\begin{equation*}
\|x\| - E_x [\|X(\sigma-)\|] 
\ge 2\eee \stackrel{\circ}{\mu}\!\!\mbox{}^{\text{min}}t_x - \left(\sum_n \stackrel{\circ}{\mu}_n\right) E_{x}[\sigma]. 
\end{equation*}
Since $\eee = (\sum_n \stackrel{\circ}{\mu}_n ) \big/ \stackrel{\circ}{\mu}\!\!\mbox{}^{\text{min}}$
and $t_x \ge \sigma$, this implies
\[ \|x\| - E_x [\|X(\sigma-)\|] \ge \left(\sum_n \stackrel{\circ}{\mu}_n \right) E_{x}[\sigma], \] 
which demonstrates case (c) of (\ref{eq4.23.1}) of Proposition \ref{prop4.22.4}.
\end{proof}
We note that, in some instances, the proof of case (c) of Proposition \ref{prop4.22.4} is not needed, or can
be simplified.  For instance, if all of the service distributions have bounded support then, on account of
(\ref{eq4.33.2}), case (c) will be vacuous if $M_R$ is chosen to be a large enough multiple of $M_L$, and so
the proofs of the  first two parts suffice.

Suppose, instead, that the service discipline is PS.  Then, at any time $t$ and queue $n$ with $Z_n (t) \le M_L$,
$\|X(t)\|_{R,n}$ decreases at rate 
\begin{equation}
\label{eq4.37.1}
\frac{\stackrel{\circ}{\mu}_n}{Z_n (t)} \sum_{i=1}^{Z_n (t)} \psi_{W}^{\prime} (W_{n,i} (t))
\ge \frac{\stackrel{\circ}{\mu}_n}{M_L} \psi_{W}^{\prime} (W_n (t) / M_L).
\end{equation}
If $M_L$ is fixed and $M_R$ is chosen large enough then, for $\|X(t)\|_{R,n} > M_R$, 
the right side of (\ref{eq4.37.1}) is at least 
$4 \sum_{n^{\prime}} \stackrel{\circ}{\mu}_{n^{\prime}}$ for some $n$.
One can then argue analogously to the proof of case (b) that $\|X(t)\|$ decreases at least at
rate $\sum_{n^{\prime}} \stackrel{\circ}{\mu}_{n^{\prime}}$  for $\|X(t)\|_R > M_R$, which will imply (\ref{eq4.23.1})
in case (c) as well.  One can employ a similar argument for the FIFO service discipline, although one first needs
to redefine the state space $S$ so as to suppress information from the state space descriptor on the 
service times of jobs for which service
has not yet begun.  In none of these instances is the assumption (\ref{eq1.11.2a}) used, since lower 
bounds on the service effort devoted to jobs with large residual service times always hold.
(Such lower bounds will not in general hold for LIFO and certain other 
standard service disciplines.)  

When the interarrival times of a queueing network are exponentially distributed, one has the option of removing the
residual interarrival times from the state space descriptor of the process $X(\cdot)$.  The resulting process
$X^{\prime}(\cdot)$, which takes values $x^{\prime}$ in the corresponding space $S^{\prime}$, will also be Markov
under service disciplines that do not employ the omitted information.  
It is easy to see that positive Harris recurrence of $X(\cdot)$ implies the same for $X^{\prime}(\cdot)$; its 
equilibrium is the projection of the equilibrium of $X(\cdot)$.  If one wishes, one can instead demonstrate the 
analog of Theorem \ref{thm1.13.1} directly for $X^{\prime}(\cdot)$ by employing the norm $\|\cdot\|^{\prime}$, with
\begin{equation}
\label{eq4.40.1}
\|x^{\prime}\|^{\prime} = \|x^{\prime}\|_L +  \|x^{\prime}\|_R,
\end{equation}
where $\|\cdot\|_L$ and $\|\cdot\|_R$ are defined as in (\ref{eq4.3.2}) and (\ref{eq4.6.1}).  The proof simplifies 
somewhat, since one can combine Propositions \ref{prop4.10.2} and \ref{prop4.13.1}, and one only requires the bounds
(\ref{eq4.10.3}), (\ref{eq4.16.1}) and (\ref{eq4.16.2}) given there.  Since the state with no jobs is accessible and
petite, positive Harris recurrence follows from this version of Theorem \ref{thm1.13.1}.  One can also show the analog
of Theorem \ref{thm1.17.2}, which is proved in the next section.

As mentioned in Section 2, one can enrich the space $S$ by introducing other coordinates in the state space
descriptor, such as (a) the elapsed time since the last arrival from each stream, or (b) the amount of service
already received by each job.  The arguments we have given for Theorems \ref{thm1.13.1} and \ref{thm1.14.1}  
remain the same in these settings,
since the randomness of the networks -- due to future interarrival and service times, and choices of the selection
set -- is not affected; nor is the norm $\|\cdot\|$. Theorem \ref{thm1.17.2} can also be extended without 
difficulty to these settings, with the corresponding quantities being included in (\ref{eq1.18.1}) and (\ref{eq1.18.2})
if desired.  
It is easy to see from the summary of its proof that Corollary \ref{cor1.15.2} holds under the
enrichment (b); Corollary \ref{cor1.15.2} also holds under (a), although an extra step is required at the end
of the proof to match up the new
coordinates.  Similar comments hold for Corollary \ref{corN8.1}.  In each of the above cases, when the 
interarrival times are exponentially distributed, one can simultaneously include new
coordinates as in (b) while removing the coordinates corresponding to the residual interarrival time from the state
space descriptor, as in the previous paragraph. 

\subsection*{JLLQ networks}
At the beginning of Section 1, we briefly mentioned JLLQ networks, where jobs are assigned
to the queue with the smallest workload, that is, to the queue $n$ where
$\sum_{i=1}^{n} v_{n,i}$ is smallest.  When two or more queues have the smallest workload, one of these queues is
chosen according to some rule.  The traffic
intensities $\rho_i$, $i=1,2$, are defined as in (\ref{eq1.10.1}) and (\ref{eq1.10.2}) for the class and station
independent cases, and the network is said to be subcritical if $\rho_i < 1$.

The stability of the network is not affected by the (non-idling) service discipline, since the evolution of the workload
at a queue is not affected.  Analogs of Theorems \ref{thm1.13.1} and \ref{thm1.14.1}, and Corollary \ref{cor1.15.2}
for the stability of subcritical JLLQ networks hold, 
as do the uniform bounds in Theorem \ref{thm1.17.2} and Corollary \ref{corN8.1},
with the assumptions (\ref{eq1.11.2a}) and (\ref{eqN7.1}) no longer being needed.

Stability of subcritical JLLQ networks is intuitively more obvious than for subcritical JSQ networks, since the system
cannot be ``tricked" into sending jobs into queues with high remaining work.  Stability is also easier to show.  In
\cite{FC}, it was shown by using the same argument involving fluid limits that was used for JSQ networks with the FIFO service
discipline.  One can also show stability, as well as uniform bounds on the marginal distributions, 
with the aid of an appropriate norm that satisfies the analog of Theorem 
\ref{thm1.13.1}.  We do not supply a proof here, but provide some motivation. 

Such a norm $\|\cdot\|$ is given by
\begin{equation}
\label{eq4.42.1}
\|x\|  = \sum_{n=1}^{N}  \psi(g_n),
\end{equation}
with
\begin{equation}
\label{eq4.43.1}
g_n = \sum_{i=1}^{z_n} \stackrel{\circ}{m}_{k_{n,i},A_{n,i}} w_{n,i}^{+} 
+ (1 + \epsa /2) \sum_{k=1}^{K} \sum_{A\subseteq B_N} p_{k,A} q_{k,A,n} \stackrel{\circ}{m}_{k,A} \psi_A (s_k).
\end{equation}
The quantities $\stackrel{\circ}{m}_{k,A}$, $w_{n}^{+}$, $\epsa$, $p_{k,A}$, $q_{k,A,n}$, $s_k$ and $\psi_A (\cdot)$
are the same as those that were employed in Section 4 to define the norm $\|\cdot\|$ there.  The function 
$\psi: \mathbb{R}_{+,0} \rightarrow \mathbb{R}_{+,0}$ is required to be twice continuously differentiable, with
$\psi(0) = 0$, $\psi ^{\prime}(y) > 0$, $\psi^{\prime}(y) \rightarrow \infty$ as $y \rightarrow \infty$, 
$\psi^{\prime\prime} (y) > 0$, $\psi^{\prime\prime} (y) \rightarrow 0$ as $y \rightarrow \infty$, and
\[ \int_{0}^{\infty} \psi(y) F_j (dy) < \infty \qquad \text{for all } j.\]

The first sum in (\ref{eq4.43.1}) plays a role similar to $\|\cdot\|_{L,n}$ for the JSQ rule, and the double sum in
(\ref{eq4.43.1}) plays a role similar to $\|\cdot\|_{A}$.  (When the interarrival times are exponentially distributed,
one can remove them from the state space descriptor, and omit the double sum.)  The function $\psi(\cdot)$ has been chosen
so that for large $\|X(t)\|$, $\|X(t)\|^{\prime} < - \hhh$ for given $\hhh > 0$.  Since 
$\psi^{\prime}(y) \rightarrow \infty$ as $y \rightarrow \infty$, the idling at empty queues does not affect this bound
except in the computation of the constant.
Also, since $\psi^{\prime\prime}(y) \rightarrow 0$ as $y\rightarrow \infty$,  $\psi^{\prime}(y)$ is ``almost 
constant" locally for large
$y$, which can be employed in conjunction with the subcriticality of the 
network to induce a negative drift for $\|X(t)\|$ at large values.  We omit the 
details. 
\section{Uniform bounds on families of JSQ networks and tightness}
In Theorem \ref{thm1.14.1}, we demonstrated positive Harris recurrence for the Markov process $X(\cdot)$ for subcritical JSQ
networks, provided the sets $A_M$ given there are petite.  Such a network therefore has an equilibrium probability measure 
$\mathcal{E}$.  Here, we consider the equilibria $\mathcal{E}^{(a)}$, $a \in \mathcal{A}$, of families $\mathcal{A}$ of
such networks, and demonstrate the uniform bounds on the tails of these equilibria that are given in (\ref{eq1.18.1}) and
(\ref{eq1.18.2}) of Theorem \ref{thm1.17.2}.  The proof of (\ref{eq1.18.1}) occupies most of this section.  After proving the
theorem, we then apply it to show tightness and relative compactness of the marginal distributions 
under additional assumptions on the service disciplines.  

We introduce the notation
\begin{equation}
\label{eq5.3.1}
\begin{split}
\gamma_{z}^{(a)}(M) = & \, \mathcal{E}^{(a)}(Z_1 > M), \qquad \gamma_{\lsm}^{(a)}(M) = \mathcal{E}^{(a)}(\Lbig_{1}^{(a)} > M), \\ 
& \gamma_{w}^{(a)}(M) = \mathcal{E}^{(a)}(W_{1}^{(a)} > M)
\end{split}
\end{equation}
and 
\begin{equation}
\label{eq5.3.2}
\gamma_{s}^{(a)}(M) = \max_{k} \mathcal{E}^{(a)}(S_{k}^{(a)} > M).
\end{equation}
The quantities $z_n$, $\lsm_{n}^{(a)}$, $w_{n}^{(a)}$ and $s_{k}^{(a)}$ were defined in (\ref{eq1.16.5})--(\ref{eq1.16.7});
$Z_n$, $\Lbig_{n}^{(a)}$, $W_{n}^{(a)}$ and $S_{k}^{(a)}$ are the corresponding random variables.  (Recall that
$\Lbig_{n}^{(a)}$ is employed to avoid possible confusion with the constants $L_i$.)  On account of the
symmetry condition (\ref{eq1.17.1}),
the probabilities in (\ref{eq5.3.1}) do not depend on the specific queue $n$ that is chosen; we denote by $z_1$,
$\lsm_{1}^{(a)}$ and 
$w_{1}^{(a)}$ the coordinates of a particular queue.  
To demonstrate (\ref{eq1.18.1}) and (\ref{eq1.18.2}) of Theorem \ref{thm1.17.2}, it suffices to show that
\begin{equation}
\label{eq5.3.3}
\gamma_{z}^{(a)}(M), \gamma_{\lsm}^{(a)}(M), \gamma_{w}^{(a)}(M),\gamma_{s}^{(a)}(M) 
\rightarrow 0 \qquad \text{as } M \rightarrow \infty,  
\end{equation}
uniformly in $a\in \mathcal{A}$.

The argument for $\gamma_{s}^{(a)}(M)$ is elementary.  The arrival flows are renewal processes with densities
$G_{k}^{(a)}(dy)$.  The densities of the corresponding stationary distributions are therefore 
$\alpha_k y G_{k}^{(a)}(dy)$. Substituting this into (\ref{eq1.16.1new}) implies $\gamma_{s}^{(a)}(M) \rightarrow 0$ uniformly
in $a$ as $M \rightarrow \infty$, as desired.

The main idea in showing (\ref{eq5.3.3}) for $\gamma_{z}^{(a)}(M)$ and $\gamma_{w}^{(a)}(M)$ will be to employ the bounds from
Sections 4 and 5, for large $\|\cdot\|_{L,n}^{(a)}$ and $\|\cdot\|_{R,n}^{(a)}$, to show that if at a given queue $n$ either the
queue length $z_n$ or the weighted workload $w_{n}^{(a)}$ is large with nonnegligible probability with respect to a given initial
measure $\nu$, then $E_{\nu} [\|X^{(a)}(t)\|^{(a)}]$ will decrease over an appropriate time interval.  Since the measures 
$\mathcal{E}^{(a)}$ are stationary, this behavior will provide a contradiction unless (\ref{eq5.3.3}) holds for both 
$\gamma_{z}^{(a)}(\cdot)$ and $\gamma_{w}^{(a)}(\cdot)$.  We first demonstrate (\ref{eq5.3.3}) 
for $\gamma_{z}^{(a)}(\cdot)$, which is not
difficult.  The argument for $\gamma_{w}^{(a)}(\cdot)$ is more involved, and relies on the argument for
$\|\cdot\|_{R,n}$ in Section 5.

The limit in (\ref{eq5.3.3}) for $\gamma_{\lsm}^{(a)}(M)$ follows without difficulty from that for $\gamma_{w}^{(a)}(M)$.  The
basic idea is that jobs will not have time to age significantly before the queue empties, if the workload is typically low.  
We present this argument next.

\begin{proof}[Proof of (\ref{eq5.3.3}) for $\gamma_{\lsm}^{(a)}(\cdot)$]
It suffices to show that, for each $\delta>0$,
\begin{equation}
\label{eqN37.1}
\gamma_{\lsm}^{(a)}(M) \ge \delta \quad \Rightarrow \quad M \le M_1(\delta)
\end{equation}  
for some function $M_1(\cdot)$ that does not depend on $a$.  The argument involves partitioning the time interval
$[0,g^{(a)}M]$, $g^{(a)} \stackrel{\text{def}}{=} (m^{\text{max}})^{(a)}$, into subintervals of length 
$g^{(a)}M_2(\delta)$, for appropriate $M_2(\delta)$.  One applies (\ref{eq5.3.3}), 
for $\gamma_{w}^{(a)}(\cdot)$, and (\ref{eqN7.1}) to obtain lower
bounds on the probability that a given queue is empty sometime on such an interval.  Appropriate events corresponding
to the intervals will be disjoint, and adding their probabilities will then imply (\ref{eqN37.1}).

We choose $M_2(\cdot)$ so that 
\begin{equation}
\label{eqN37.2}
\sup_{a\in \mathcal{A}} \gamma_{w}^{(a)}(M_2(\delta) - 2\Gamma) \le \delta /2,
\end{equation}
where $\Gamma$ is as in (\ref{eqN7.1}).  Also, denote by $A_{i}^{(a)}(\delta)$, $i=0,1,2,\ldots$, the events on which
at most $\Gamma$ potential arrivals occur at the queue over 
$(ig^{(a)}M_2(\delta),(i+1)g^{(a)}M_2(\delta)]$, with each such arrival
having service time at most $2g^{(a)}$.  As in (\ref{eq4.33.3}),
\begin{equation}
\label{eqN37.3}
P_x(A_{i}^{(a)}(\delta)) \ge 2^{-\Gamma}h(M_2(\delta))
\end{equation}
for all $x$ and $a$.  If we choose $M$ so that $\gamma_{\lsm}^{(a)}(M) \ge \delta$
for a given $\delta$ and $a$, it follows from (\ref{eqN37.2}), (\ref{eqN37.3}) and the stationarity
of $\mathcal{E}^{(a)}$ that
\begin{equation}
\label{eqN37.4}
\begin{split}
& P_{\mathcal{E}^{(a)}}\left(\Lbig_{1}^{(a)}(iM_2(\delta)) > M,  W_{1}^{(a)}(iM_2(\delta)) \le M_2(\delta) - 2\Gamma;
A_{i}^{(a)}(\delta)\right) \\ 
& \qquad \qquad \qquad \qquad \qquad \qquad \ge \delta 2^{-\Gamma - 1}h(M_2(\delta)).
\end{split}
\end{equation}

If at some time $\tau \in [ig^{(a)}M_2(\delta), g^{(a)}M]$, $Z_{1}^{(a)}(\tau) = 0$ holds then, for each
$t\in [\tau, g^{(a)}M]$, it is immediate that $\Lbig_{1}(t) \le M$.  On the other hand, under the event 
$B_{i}^{(a)}(\delta)$ on the left side of (\ref{eqN37.4}), the total amount of work initially at or entering the queue
over $[ig^{(a)}M_2(\delta), (i+1)g^{(a)}M_2(\delta)]$ is at most $g^{(a)}(M_2(\delta) - 2\Gamma + 2\Gamma) =
g^{(a)}M_2(\delta)$, which implies it will be empty at some time in $[ig^{(a)}M_2(\delta), (i+1)g^{(a)}M_2(\delta)]$.
Hence, on $B_{i}^{(a)}(\delta)$,
\begin{equation}
\label{eqN37.5}
\Lbig_{1}^{(a)}(t) \le M \qquad \text{for } t\in [(i+1)g^{(a)}M_2(\delta), g^{(a)}M]. 
\end{equation}

It follows from (\ref{eqN37.5}) and the definition of $B_{i}^{(a)}(\delta)$ that, for 
$\gamma_{\lsm}^{(a)}(M) \ge \delta$ and $M \ge IM_2(\delta)$, $I\in \mathbb{Z}_{+}$, the events $B_{i}^{(a)}(\delta)$, 
$i=0,\ldots,I-1$, are disjoint.  Taking their union, it follows from (\ref{eqN37.4}) that
\begin{equation*}
P_{\mathcal{E}^{(a)}}\left(\bigcup_{i=0}^{I-1} B_{i}^{(a)}(\delta)\right) \ge I\delta 2^{-\Gamma - 1}h(M_2(\delta)).
\end{equation*}
Consequently, $I\le 2^{\Gamma + 1}/(\delta h(M_2(\delta)))$, and so, under $\gamma_{\lsm}^{(a)}(M) \ge \delta$,
\begin{equation*}
M \le \left(2^{\Gamma + 1}/(\delta h(M_2(\delta))) + 1\right)M_2(\delta) \stackrel{\text{def}}{=} M_1(\delta),
\end{equation*}
which does not depend on $a$.  This implies (\ref{eqN37.1}).
\end{proof}
\subsection*{Demonstration of (\ref{eq5.3.3}) for $\gamma_{z}^{(a)}(\cdot)$} 
The uniformity conditions (\ref{eq1.16.2new})--(\ref{eq1.16.3b}) ensure that the norms $\|\cdot\|^{(a)}$, $a \in 
\mathcal{A}$, can be defined, for a given $\ellx$, by choosing the quantities $\psi_Z (\cdot)$,
$\psi_W (\cdot)$, $\psi_A (\cdot)$, $\ellv$, $\ellw$, $M_1$, $\epsa$, $\epsb$ and $\epsc$ from 
Section 4 so as not to depend on $a$.  The uniform
bounds obtained from these choices will be applied to show (\ref{eq5.3.3}) for both $\gamma_{z}^{(a)}(\cdot)$ and 
$\gamma_{w}^{(a)}(\cdot)$.  The term $\ellx$, $\ellx \ge 4$, should be thought of as a free variable; 
when showing (\ref{eq5.3.3}), we will let $\ellx \rightarrow \infty$.

To see the above claim on the choice of these quantities, first note that there exist 
functions $\psi_W (\cdot)$ and $\psi_A (\cdot)$ satisfying the
regularity and monotonicity properties given after (\ref{eq4.3.3}) and (\ref{eq4.8.1}), with $\psi_A (\cdot)$ satisfying
(\ref{eq4.8.2}) for appropriate $M_1$, so that the following analogs of (\ref{eq4.6.2}) and (\ref{eq4.8.3}) hold:
\begin{equation}
\label{eq5.5.1}
\sup_{a\in \mathcal A} \, \max_j \int_{0}^{\infty} \psi_{W}(\mu{_j}^{(a)} y)F_{j}^{(a)}(dy) \le \epsd
\end{equation}
and
\begin{equation}
\label{eq5.5.2}
\sup_{a\in \mathcal A} \, \max_k \int_{M_1 / \alpha_{k}^{(a)}}^{\infty}\left(\psi_{A}(\alpha_{k}^{(a)} y)
+ \alpha_{k}^{(a)} y \right) G_{k}^{(a)}(dy) \le \epsf,
\end{equation}
where $\epsd = (\epsa)^2 / 40$ as in (\ref{eq4.13.0}); $\epsa$ is determined by 
the left side of (\ref{eq1.16.3}).
Both inequalities follow without difficulty from (\ref{eq1.16.2new}) and (\ref{eq1.16.1new}); the uniform limits
of the tails in (\ref{eq1.16.2new}) and (\ref{eq1.16.1new}) permit the choice of $\psi_W (\cdot)$ and
$\psi_A (\cdot)$ as in (\ref{eq5.5.1}) and (\ref{eq5.5.2}), with $\psi_{W}^{\prime} (y) \nearrow \infty$ and
$\psi_{A}^{\prime} (y) \nearrow \infty$ as $y\nearrow \infty$.

As in (\ref{eq4.13.0}), set $\epsc = (\epsa)^{2}\big/ (40M_1\!\! \stackrel{\circ}{m}\!\!\mbox{}^{\text{ratio}})$, but with 
$\stackrel{\circ}{m}\!\!\mbox{}^{\text{ratio}}$ given by (\ref{eq1.16.3b}).  For 
given $\ellx$, $\ellv$ and $\ellw$ are chosen as in (\ref{eq4.6.3})
and (\ref{eq4.7.1}).  The function $\psi_Z (\cdot)$ is then specified in (\ref{eq4.4.2}), using these choices of
$\epsa$, $\epsc$, $\ellv$ and $\ellw$.  Employing these quantities, one defines $\|\cdot\|_{R}^{(a)}$,
$\|\cdot\|_{L}^{(a)}$ and $\|\cdot\|_{A}^{(a)}$ as in (\ref{eq4.6.1}), (\ref{eq4.3.2}) and (\ref{eq4.8.1}), and
$\|\cdot\|^{(a)}$ as in (\ref{eq1.16.4}).  These norms will depend on $a$ in general.

Rather than deal directly with $\|\cdot\|^{(a)}$, we need to employ a truncated version in order to guarantee that
its expectation with respect to $\mathcal{E}^{(a)}$ is finite.  We denote by $\|\cdot\|^{(a,\ellx)}$ the norm on
$S^{(a)}$ for given $\ellx$ and, by
\begin{equation}
\label{eq5.6.1}
\|x\|^{(a,\ell)} \stackrel{\text{def}}{=} \|x\|^{(a,\ellx)} \wedge \bar{M} \qquad \text{for } x\in S^{(a)},
\end{equation}
its truncation at a given value $\bar{M}$, with 
$\ell \stackrel{\text{def}}{=} (\ellx, \bar{M})$.  (If the expectation of $\|\cdot\|^{(a,\ellx)}$
with respect to $\mathcal{E}^{(a)}$, $a \in \mathcal{A}$, is finite, one can set $\bar{M} = \infty$.)  Since
$\mathcal{E}^{(a)}$, $a \in \mathcal{A}$, is invariant,
\begin{equation}
\label{eq5.6.3}
E_{\mathcal{E}^{(a)}} \left[\|X^{(a)}(t)\|^{(a,\ell)}\right] = E_{\mathcal{E}^{(a)}} \left[\|X^{(a)}(0)\|^{(a,\ell)}\right]  
\qquad \text{for all } t.
\end{equation}

As in Section 5, we denote by $T$ the time of the first arrival in the network.  Here and later on, when the context is
clear, we drop the superscript $(a)$ for quantities such as $X(\cdot)$ and $Z_n (\cdot)$ that are associated with
the networks.
\begin{proof}[Proof of (\ref{eq5.3.3}) for $\gamma_{z}^{(a)}(\cdot)$]
We decompose
\begin{equation}
\label{eq5.7.0}
E_{\mathcal{E}^{(a)}} \left[\|X(0)\|^{(a,\ell)} - \|X(t)\|^{(a,\ell)}\right]   
\end{equation}
into two parts, depending on whether $T>t$ or $T\le t$.  For small $t$, the part with $T>t$ will contribute the main term.
For $\|x\|^{(a,\ellx)} \le \bar{M}$ and given $\ellx$ and $\bar{M}$, one has
\begin{equation}
\label{eq5.7.1}
\begin{split}
& E_{x}\left[\|x\|^{(a,\ell)} - \|X(t)\|^{(a,\ell)};\, T>t\right] \\
& \quad \ge E_{x}\left[\|x\|^{(a,\ellx)} - \|X(t)\|^{(a,\ellx)};\, T>t\right] \\
& \quad \ge E_{x} \left[(\epsa \stackrel{\circ}{\mu}\!\!\mbox{}^{(a)}/2) 
\sum_n \int_{0}^{t} \left(\psi_Z (Z_n (t^{\prime})) -2 \right) dt^{\prime};\, T>t\right] \\
& \quad \ge E_{x} \left[(\epsa \stackrel{\circ}{\mu}\!\!\mbox{}^{(a)}/2) 
\sum_n \left((\ellx /2)\int_{0}^{t} \mathbf{1} \{Z_n (t^{\prime}) > \ellw\}dt^{\prime} \right.\right. \\ 
& \quad \qquad \left.\left.  -2 \int_{0}^{t} \mathbf{1} \{Z_n (t^{\prime}) \le \ellw\}dt^{\prime}\right);\, T>t\right], 
\end{split}
\end{equation}
with the first inequality holding since $\|X(t)\|^{(a,\ell)} \le \|X(t)\|^{(a,\ellx)}$, the second inequality following
from (\ref{eq4.10.5}) of Proposition \ref{prop4.10.2}, and $\psi_Z (\ellw) = \ellx \ge 4$ being used for the third inequality.
Here, $\stackrel{\circ}{\mu}\!\!\mbox{}^{(a)} \stackrel{\text{def}}{=} \,\, \stackrel{\circ}{\mu}_{n}\!\!\!\mbox{}^{(a)}$, 
which is constant in $n$ by assumption.  

On the other hand, for fixed $a$ and $\ellx$, 
\begin{equation*}
\mathcal{E}^{(a)} \left(\|X\|^{(a,\ellx)} > \bar{M} \right) \rightarrow 0 \qquad \text{as } \bar{M} \rightarrow \infty.  
\end{equation*}
Also, $\|X(0)\|^{(a,\ell)} \ge \|X(t)\|^{(a,\ell)}$ for $\|X(0)\|^{(a,\ellx)} > \bar{M}$, and the quantity inside 
$E_x[\cdot]$ for the last term in (\ref{eq5.7.1}) is always at most 
$(\epsa \!\stackrel{\circ}{\mu}\!\!\mbox{}^{(a)}/4)\ellx N^{(a)}t$.  It therefore follows from (\ref{eq5.7.1})
that
\begin{equation}
\label{eq5.8.1}
\begin{split}
& E_{{\mathcal{E}^{(a)}}}\left[\|X(0)\|^{(a,\ell)} - \|X(t)\|^{(a,\ell)};\, T>t\right] \\
& \quad \ge E_{{\mathcal{E}^{(a)}}} \left[(\epsa \! \stackrel{\circ}{\mu}\!\!\mbox{}^{(a)}/2) 
\sum_n \left((\ellx /2)\int_{0}^{t} \mathbf{1} \{Z_n (t^{\prime}) > \ellw\}dt^{\prime} \right.\right. \\ 
& \quad \qquad \left.\left.  -2 \int_{0}^{t} \mathbf{1} \{Z_n (t^{\prime}) \le \ellw\}dt^{\prime}\right);\, T>t\right]
- \ellx N^{(a)} t \delta_{1}^{(a)}(\bar{M}) 
\end{split}
\end{equation}
for appropriate $\delta_{1}^{(a)}(\cdot)$, with $\delta_{1}^{(a)}(\bar{M}) \searrow 0$
as $\bar{M} \nearrow \infty$.  

We now set $M = \ellw$, for $M$ in (\ref{eq5.3.3}).  Since $Z_n (\cdot)$ has right continuous sample paths
and so cannot immediately fall below $\ellw$ if $Z_n (0) > \ellw$, the right side of (\ref{eq5.8.1}) is at least
\begin{equation*}
N^{(a)}t\left(\epsa \stackrel{\circ}{\mu}\!\!\mbox{}^{(a)}\left((\ellx /4) \gamma_{z}^{(a)}(\ellw) -1\right)
- \ellx \delta_{1}^{(a)}(\bar{M}) - \ellx \delta_{2}^{(a)}(t)\right)
\end{equation*}
for appropriate $\delta_{2}^{(a)}(\cdot)$, with $\delta_{2}^{(a)}(t) \searrow 0$ as
$t \searrow 0$.  Hence, for large enough $\bar{M}$ and small enough $t$,
\begin{equation}
\label{eq5.8.2}
\begin{split}
& E_{{\mathcal{E}^{(a)}}}\left[\|X(0)\|^{(a,\ell)} - \|X(t)\|^{(a,\ell)};\, T>t\right] \\
& \qquad \qquad \ge \epsa \! \stackrel{\circ}{\mu}\!\!\mbox{}^{(a)}N^{(a)}t \left((\ellx /4) \gamma_{z}^{(a)}(\ellw) -2\right).
\end{split}
\end{equation}

On the other hand, by employing (\ref{eq4.10.5}) of Proposition \ref{prop4.10.2}, Proposition \ref{prop4.13.1} and the
strong Markov property, it follows that 
\begin{equation}
\label{eq5.9.1}
E_{{\mathcal{E}^{(a)}}}\left[\|X(0)\|^{(a,\ell)} - \|X(t)\|^{(a,\ell)};\, T\le t \right]
\ge -\epsa \! \stackrel{\circ}{\mu}\!\!\mbox{}^{(a)}N^{(a)}t  P_{{\mathcal{E}^{(a)}}}(T\le t).
\end{equation}
One can see this by considering the above difference over $[0,T]$ and $[T,t]$, and, for the second part, arguing as in
the proof of Corollary \ref{cor4.18.1}.

Combining (\ref{eq5.8.2}) and (\ref{eq5.9.1}), and choosing $\bar{M}$ large enough and $t$ small enough, one obtains
\begin{equation}
\label{eq5.9.2}
E_{{\mathcal{E}^{(a)}}}\left[\|X(0)\|^{(a,\ell)} - \|X(t)\|^{(a,\ell)} \right]
\ge \epsa \! \stackrel{\circ}{\mu}\!\!\mbox{}^{(a)}N^{(a)}t  \left((\ellx /4) \gamma_{z}^{(a)}(\ellw) - 3 \right).
\end{equation}
By the invariance of $\mathcal{E}^{(a)}$, the left side of (\ref{eq5.9.2}) is zero, and so
\begin{equation}
\label{eq5.10.1}
\gamma_{z}^{(a)}(\ellw) \le 12 / \ellx.
\end{equation}
The limit (\ref{eq5.3.3}) follows by letting $\ellx \rightarrow \infty$.
\end{proof}
\subsection*{Demonstration of (\ref{eq5.3.3}) for $\gamma_{w}^{(a)}(\cdot)$}
On account of the limiting behavior in (\ref{eq5.3.3}) for $\gamma_{z}^{(a)}(\cdot)$, it suffices to instead show that
\begin{equation}
\label{eq5.11.1}
\tilde{\gamma}_{w}\!\!\mbox{}^{(a)}(M) \rightarrow 0 \qquad \text{as } M\rightarrow \infty
\end{equation}
uniformly in $a$ for
\begin{equation}
\label{eq5.11.2}
\tilde{\gamma}_{w}\!\!\mbox{}^{(a)}(M) \stackrel{\text{def}}{=} \mathcal{E}^{(a)}(W_{1}^{(a)} > M^{\prime}, Z_1 \le M),
\end{equation}
where $M^{\prime}$ is a function of $M$ with $M^{\prime} \rightarrow \infty$ as $M \rightarrow \infty$.  We employ
the same basic framework here as we did in analyzing $\gamma_{z}^{(a)}(\cdot)$, and will apply the truncated norm in
(\ref{eq5.6.1}) to (\ref{eq5.6.3}), which we will show is violated unless the limit in (\ref{eq5.3.3}) holds.  As in
the analysis of $\gamma_{z}^{(a)}(\cdot)$, we set $M=\ellw$ here; we will set $M^{\prime} = \elly$, which will be a
function of $\ellw$ (and hence of $\ellx$) and will be defined in (\ref{eq5.14.1}).  As in the demonstration of (\ref{eq5.3.3})
for $\gamma_{z}^{(a)}(\cdot)$, we continue to drop the superscript $(a)$ when convenient and the context is clear.

The argument relies heavily on the constructions employed for the demonstration of case (c) of Proposition \ref{prop4.22.4}.
We briefly recall the quantities that were employed and whether they depend on $a\in \mathcal{A}$.  The quantities
$\psi_Z (\cdot)$, $\psi_W (\cdot)$, $\psi_A (\cdot)$, $\ellv$, $\ellw$, $M_1$, $\epsa$, $\epsb$ and $\epsc$
were specified in the previous subsection and do not depend on $a$.  As before, the quantity $\ellx$ is allowed to vary,
with $\ellv$ and $\ellw$ being functions of $\ellx$.  We recall the terms $\Gamma$ and $h(\cdot)$ from (\ref{eqN7.1}), and the
terms $p(\cdot)$, $w(\cdot)$ and $t(\cdot)$ from (\ref{eq4.30.0})--(\ref{eq4.30.2}), none of which depends on $a$.  
Instead of employing $\eee = \sum_n \stackrel{\circ}{\mu}_n \big/ \stackrel{\circ}{\mu}\!\!\mbox{}^{\text{min}}$ as in
(\ref{eq4.30.1}), we now set 
\begin{equation}
\label{eq5.14.0}
\eee = \ellw.
\end{equation}
Also, rather than employing the quantities $M_L$ and $M_R$ from (\ref{eq4.25.2}) and (\ref{eq4.31.1}), we use $\ellw$ and
\begin{equation}
\label{eq5.14.1}
\elly \stackrel{\text{def}}{=} \psi_W \left( \sum_{i=1}^{\ellw} w(i) \right).
\end{equation}
Defining $w_{n,i}^{\prime}$ as before Lemma \ref{lem4.31.2}, the conclusions of the lemma continue to hold if 
$w_{n}^{(a)} > \elly$ and $z_n \le \ellw$ replace the conditions on $\|x\|_{R,n}$ and $z_n$ there, 
with the argument in the proof being the same.  
If $w_{n}^{(a)} > \elly$ and $z_n \le \ellw$, for given $x$ and $n$, we define $t_{x,n}$ in terms of $i_{x,n}$ as in (\ref{eq4.33.1}),
but with $i_{x,n}$ being the smallest index $i$ at which $w_{n,i}^{\prime} > w(i)$. 

Both $i_{x,n}$ and $t_{x,n}$ depend on $x$, and we wish to employ a fixed time that does not.  For this, we note
it follows from (\ref{eq5.11.2}) that, for each $a\in \mathcal{A}$, there is a value $i^{(a)}$ at which
\begin{equation}
\label{eq5.15.1}
\mathcal{E}^{(a)}\left(i_{X,1}=i^{(a)},\, W_{1}^{(a)} > \elly,\, Z_1 \le \ellw\right) 
\ge 2^{-i^{(a)}}\tilde{\gamma}_{w}\!\!\mbox{}^{(a)}(\ellw).
\end{equation}
After having chosen $i^{(a)}$, we set
\begin{equation}
\label{eq5.15.2}
t^{(a)} = \left(m^{\text{max}}\right)^{(a)} \left(t(i^{(a)} -1) + 1\right).
\end{equation}

We also denote by $B_{n}^{(a)}$, $n=1,\ldots,N$, the events where at most $\Gamma$ potential arrivals occur at queue $n$ by
time $t^{(a)}$, their service times are each at most $2(m^{\text{max}})^{(a)}$, $i_{X(0),n} = i^{(a)}$, $W_{n}^{(a)}(0) > \elly$ 
and $Z_n(0) \le \ellw$.  It follows from (\ref{eq4.33.3}) and (\ref{eq5.15.1}) that 
\begin{equation}
\label{eq5.15.3}
P_{{\mathcal{E}^{(a)}}}(B_{n}^{(a)}) \ge 2^{-i^{(a)}- \Gamma}p(i^{(a)}-1)\tilde{\gamma}_{w}\!\!\mbox{}^{(a)}(\ellw) 
\qquad \text{for all } n.
\end{equation}
In the following proof, $i^{(a)}$, $t^{(a)}$ and $B_{n}^{(a)}$ will assume the roles played by $i_x$, $t_x$ and $B_x$ in
the proof of case (c) of Proposition \ref{prop4.22.4}.  
\begin {proof}[Proof of (\ref{eq5.3.3}) for $\gamma_{w}^{(a)}(\cdot)$]
We first note that, for given $\bar{M}$,
\begin{equation}
\label{eq5.16.1}
\begin{split}
& E_{{\mathcal{E}^{(a)}}}\left[\|X(0)\|^{(a,\ell)}  - \|X(t^{(a)}\!-)\|^{(a,\ell)}\right] \\
& \quad\ge E_{{\mathcal{E}^{(a)}}}\left[\|X(0)\|^{(a,\ellx)} - \|X(t^{(a)}\!-)\|^{(a,\ellx)};\, \|X(0)\|^{(a,\ellx)} \le \bar{M}\right]
\end{split}
\end{equation}
since $\|X(t^{(a)}\!-)\|^{(a,\ell)} \le \|X(t^{(a)}\!-)\|^{(a,\ellx)}$.  Setting
\begin{equation}
\label{eq5.16.2}
\Delta_{1}^{(a)} = \frac{1}{2}\sum_n E_{{\mathcal{E}^{(a)}}}
\left[\|X(0)\|_{R,n}^{(a,\ellx)} - \|X(t^{(a)}\!-)\|_{R,n}^{(a,\ellx)}; B_{n}^{(a)},\|X(0)\|^{(a,\ellx)} \le \bar{M}\right]
\end{equation}
and
\begin{equation}
\label{eq5.16.3}
\Delta_{2}^{(a)} = E_{{\mathcal{E}^{(a)}}}
\left[\|X(0)\|^{(a,\ellx)} - \|X(t^{(a)}\!-)\|^{(a,\ellx)}; \|X(0)\|^{(a,\ellx)} \le \bar{M}\right] -\Delta_{1}^{(a)},
\end{equation}
we rewrite the right side of (\ref{eq5.16.1}) as
\begin{equation}
\label{eq5.16.4}
E_{{\mathcal{E}^{(a)}}}\left[\|X(0)\|^{(a,\ellx)} - \|X(t^{(a)}\!-)\|^{(a,\ellx)}; \|X(0)\|^{(a,\ellx)} \le \bar{M}\right] 
= \Delta_{1}^{(a)} + \Delta_{2}^{(a)}.
\end{equation}

We first analyze $\Delta_{1}^{(a)}$, which will be the main term.  The same reasoning as in (\ref{eq4.34.1a}) shows that
$\Delta_{1}^{(a)}$ is at least 
\begin{equation*}
\frac{1}{2}(m^{\text{max}})^{(a)} \stackrel{\circ}{\mu}\!\!\mbox{}^{(a)}N^{(a)}
P_{{\mathcal{E}^{(a)}}}\left(B_{1}^{(a)}, \|X(0)\|^{(a,\ellx)} \le \bar{M} \right) \left(\psi_{W}^{\prime}(w(i^{(a)}) -1)\right).
\end{equation*}
This is at least
\begin{equation}
\label{eq5.17.2}
\begin{split}
& \frac{1}{2}(m^{\text{max}})^{(a)} \stackrel{\circ}{\mu}\!\!\mbox{}^{(a)}N^{(a)}
\left(2^{-i^{(a)} - \Gamma} p(i^{(a)} -1) \tilde{\gamma}_{w}\!\!\mbox{}^{(a)}(\ellw) - \delta_{3}^{(a)}(\bar{M})\right) \\
& \qquad \qquad \times \psi_{W}^{\prime}(w(i^{(a)}) -1) \\
& \quad \ge \eee \stackrel{\circ}{\mu}\!\!\mbox{}^{(a)}N^{(a)} \tilde{\gamma}_{w}\!\!\mbox{}^{(a)}(\ellw) t^{(a)} 
\end{split}
\end{equation}
for $ \tilde{\gamma}_{w}\!\!\mbox{}^{(a)}(\ellw) > 0$.
The first line follows from (\ref{eq5.15.3}), with $\delta_{3}^{(a)}(\bar{M}) \searrow 0$ as $\bar{M} \nearrow \infty$,
and the following inequality follows (like the last equality in (\ref{eq4.34.2})) from (\ref{eq4.30.1}), for large
enough $\bar{M}$.  Combining the above inequalities, one obtains, for large enough $\bar{M}$, 
\begin{equation}
\label{eq5.18.1}
\Delta_{1}^{(a)} \ge 
\eee \stackrel{\circ}{\mu}\!\!\mbox{}^{(a)}N^{(a)} \tilde{\gamma}_{w}\!\!\mbox{}^{(a)}(\ellw) t^{(a)}.
\end{equation}
This holds trivially for $ \tilde{\gamma}_{w}\!\!\mbox{}^{(a)}(\ellw) = 0$.

We claim, on the other hand, that
\begin{equation}
\label{eq5.18.2}
\Delta_{2}^{(a)} \ge - \stackrel{\circ}{\mu}\!\!\mbox{}^{(a)}N^{(a)} t^{(a)}.
\end{equation}
The argument for this is essentially the same as that given for (\ref{eq4.35.1}) in the proof of case (c) of
Proposition \ref{prop4.22.4}.  On intervals between arrivals, $\|X(t)\|_{R,n}$ is decreasing for all $n$, and so, for
all $\omega$,
\begin{equation}
\label{eq5.18.3}
\begin{split}
& \left[\|X(t)\|^{(a,\ellx)} - \frac{1}{2}\sum_n \|X(t)\|_{R,n}^{(a,\ellx)} 
\mathbf{1}\{\omega \in B_{n}^{(a)}\}\right]^{\prime} \\
& \qquad \qquad \le \left[\|X(t)\|^{(a,\ellx)} - \frac{1}{2}\|X(t)\|_{R}^{(a,\ellx)}\right]^{\prime}
\end{split}
\end{equation}
holds almost everywhere.  Also, at the time $T_i$ of an arrival in the network, 
\begin{equation}
\label{eq5.19.1}
\begin{split}
& \left[\|X(T_i)\|^{(a,\ellx)} - \frac{1}{2}\sum_n \|X(T_i)\|_{R,n}^{(a,\ellx)} 
\mathbf{1}\{\omega \in B_{n}^{(a)}\}\right] \\
& \qquad \qquad - \left[\|X(T_{i}-)\|^{(a,\ellx)} - \frac{1}{2}\sum_n \|X(T_{i}-)\|_{R,n}^{(a,\ellx)} 
\mathbf{1}\{\omega \in B_{n}^{(a)}\}\right] \\
& \quad \le \|X(T_i)\|^{(a,\ellx)} - \|X(T_{i}-)\|^{(a,\ellx)}
\end{split}
\end{equation}
since $\|X(T_i)\|_{R,n}^{(a,\ellx)} \ge \|X(T_{i}-)\|_{R,n}^{(a,\ellx)}$.  Arguing as after (\ref{eq4.36.1}), one obtains
the upper bound $\stackrel{\circ}{\mu}\!\!\mbox{}^{(a)}N^{(a)}$ for the right side of (\ref{eq5.18.3}), and $0$ for
the expectation, over $\|X(0)\|^{(a,\ellx)} \le \bar{M}$, of the right side of (\ref{eq5.19.1}).  Application of these
bounds, together with the strong Markov property, will then imply (\ref{eq5.18.2}).

Combining the bounds in (\ref{eq5.18.1}) and (\ref{eq5.18.2}), it follows from (\ref{eq5.16.1}) that
\begin{equation}
\label{eq5.20.1}
\begin{split}
& E_{\mathcal{E}^{(a)}}\left[\|X(0)\|^{(a,\ell)}  - \|X(t^{(a)}\!-)\|^{(a,\ell)}\right] \\ 
& \qquad \ge \eee \stackrel{\circ}{\mu}\!\!\mbox{}^{(a)}N^{(a)} \tilde{\gamma}_{w}\!\!\mbox{}^{(a)}(\ellw) t^{(a)} 
 - \stackrel{\circ}{\mu}\!\!\mbox{}^{(a)}N^{(a)} t^{(a)} 
\end{split}
\end{equation}
for large $\bar{M}$.  Since we have set $\eee = \ellw$, this is at least
\begin{equation*}
\left(\ellw \tilde{\gamma}_{w}\!\!\mbox{}^{(a)}(\ellw) - 1\right) \stackrel{\circ}{\mu}\!\!\mbox{}^{(a)}N^{(a)} t^{(a)}.
\end{equation*}
On the other hand, since $\mathcal{E}^{(a)}$ is invariant, the left side of (\ref{eq5.20.1}) equals 0.  Therefore, 
\begin{equation*}
\tilde{\gamma}_{w}\!\!\mbox{}^{(a)}(\ellw) \le 1/\ellw,
\end{equation*} 
and hence $\tilde{\gamma}_{w}\!\!\mbox{}^{(a)}(\ellw) \rightarrow 0$ uniformly in $a$ as 
$\ellw \rightarrow \infty$.  This implies (\ref{eq5.11.1}), and hence (\ref{eq5.3.3}) for $\gamma_{w}^{(a)}(\cdot)$, 
as desired. 
\end{proof}

At the end of Section 5, it was noted that the proof of Theorem \ref{thm1.13.1} simplifies for certain service disciplines,
such as PS and FIFO.  In particular, the proof of case (c) of Proposition \ref{prop4.22.4} can be replaced by a simpler
argument, as was outlined at the end of that section.  The same is true for the demonstration of (\ref{eq5.3.3}) for 
$\gamma_{w}^{(a)}(\cdot)$.  For disciplines such as PS and FIFO, one can give a simpler argument along the lines of
(\ref{eq5.3.3}) for $\gamma_{z}^{(a)}(\cdot)$, by investigating the decrease in the expected value of the corresponding norms
over a small enough time interval $[0,t]$.  The reasoning is similar to that provided at the end of Section 5.  Analogs of
the other observations at the end of Section 5 also hold in the uniform setting of Section 6 as well. 
\subsection*{Tightness and relative compactness of families of networks}
We are interested here in the behavior of the projections of the equilibria 
measures $\mathcal{E}^{(a)}$ and $\bar{\mathcal{E}}^{(a)}$ 
onto $(S^{\prime})^{(a)}$ and $(\bar{S}^{\prime})^{(a)}$, $a \in \mathcal{A}$, 
for families $\mathcal{A}$ of JSQ networks, with $K^{(a)}\equiv K$, for $(S^{\prime})^{(a)}$ 
and $(\bar{S}^{\prime})^{(a)}$ defined as in
Section 2.  The networks will be assumed to satisfy the hypotheses of
Theorem \ref{thm1.17.2}.  The measures $\bar{\mathcal{E}}^{(a)}$, $a \in \mathcal{A}$, are the natural extensions of 
$\mathcal{E}^{(a)}$, $a \in \mathcal{A}$, from $S^{(a)}$ to $\bar{S}^{(a)}$; they are concentrated on $S^{(a)}$.  
The spaces $(S^{\prime})^{(a)}$ and $(\bar{S}^{\prime})^{(a)}$ are assigned a fixed value $N^{\prime}$, 
$N^{\prime} \le N^{(a)}$, for all $a\in \mathcal{A}$ 
(corresponding to the first $N^{\prime}$  queues); they
are identical for different $a$, and hence can be denoted by $S^{\prime}$ and $\bar{S}^{\prime}$.  
Such $S^{\prime}$ and $\bar{S}^{\prime}$, which are projections of each $S^{(a)}$ and $\bar{S}^{(a)}$, will
be referred to as 
\emph{common projections} of the family $\mathcal{A}$.  We denote the corresponding
projected measures by $(\mathcal{E}^{(a)})^{\prime}$ and $(\bar{\mathcal{E}}^{(a)})^{\prime}$.  

As examples of such families of networks, one can think of JSQ networks indexed by $N$, the number of queues for the network, 
with the number of arrival streams $K$ being fixed.  For $N \ge N^{\prime}$, for given
$N^{\prime}$, the networks will share the common projection $S^{\prime}$ obtained by retaining only the first 
$N^{\prime}$ queues.  As $N \rightarrow \infty$, one can 
investigate the limiting behavior of the projections
$(\mathcal{E}^{(N)})^{\prime}$ of the equilibria $\mathcal{E}^{(N)}$ onto $S^{\prime}$. 

To examine the common projections of $\mathcal{E}^{(a)}$ and $\bar{\mathcal{E}}^{(a)}$, for $a\in \mathcal{A}$,
we recall from Section 2 the compact sets $E_M$ in $S$ and the compact sets $\bar{E}_M$ in $\bar{S}$.  
The corresponding sets for the projections $S^{\prime}$ of
$S$ and $\bar{S}^{\prime}$ of $\bar{S}$ will be compact with respect to their respective metrics;
we also denote these sets by $E_M$ and $\bar{E}_M$, respectively.  

Under the assumptions of Theorem \ref{thm1.17.2}, the uniform limits (\ref{eq1.18.1}) and (\ref{eq1.18.2}) hold.
The projections $(\bar{\mathcal{E}}^{(a)})^{\prime}$ onto $\bar{S}^{\prime}$ 
of the measures $\bar{\mathcal{E}}^{(a)}$ are therefore \emph{tight} with respect to the induced metrics.  That is, 
for each $\epsilon >0$, there is a compact set $B \subset \bar{S}^{\prime}$
so that $(\bar{\mathcal{E}}^{(a)})^{\prime}(B) \ge 1 - \epsilon$ for all $a\in \mathcal{A}$.   
The sets $\bar{E}_M$ can be employed to see this: The conditions $z_n \le M$, $\lsm_{n,i}^{(a)} \le M$ and 
$0\le w_{n,i}^{(a)} \le M$ in the definition of $\bar{E}_M$ clearly
suffice for (\ref{eq1.18.1}).  On the other hand, the probability of at least one arrival in the network over 
$(0,1/\alpha_{k}M)$ from a given arrival stream $k$ is at most $1/M$ for a network in equilibrium, 
and so the condition $1/M \le s_{k}^{(a)} \le M$ suffices for (\ref{eq1.18.2}).

For certain service disciplines, the projections $(\mathcal{E}^{(a)})^{\prime}$ onto $S^{\prime}$ 
of the measures $\mathcal{E}^{(a)}$ are also tight.  In addition to employing the reasoning of the previous paragraph, one also
needs to show that 
\begin{equation}
\label{eq5.21.1}
\sup_{a\in \mathcal{A}} \mathcal{E}^{(a)} \left(W_{n,i}^{(a)} < 1/M \text{ for some $i$},\, Z_n > 0\right) 
\rightarrow{0} \qquad \text{as } 
M \rightarrow{\infty},
\end{equation}
for given $n$.  When the projections $(\mathcal{E}^{(a)})^{\prime}$ are tight, it will be more informative to work with them
than with the projections $(\bar{\mathcal{E}}^{(a)})^{\prime}$, since all limits will be concentrated
on $S^{\prime}$.

Tightness of $(\mathcal{E}^{(a)})^{\prime}$ is not difficult to show for PS networks since, 
when the number of jobs at a queue is bounded, 
each job must be served at at least a given rate.  When there are jobs with scaled residual service times 
close to $0$, these jobs will quickly
leave the queue.  On the other hand, the scaled rate at which jobs leave the queue is bounded in equilibrium, which 
therefore gives an upper bound on the expected number of jobs in equilibrium with scaled residual service times close to $0$.

For FIFO networks, (\ref{eq5.21.1}) will not be true in general since, depending on the choice of the network $a$, the
distributions $F_{j}^{(a)}(\cdot)$ might be concentrated arbitrarily close to $0$, and jobs that are not the oldest at their
queue will not be served until the departure of older jobs.  For families $\mathcal{A}$ of networks where the service time
distributions do not depend on $a$, (\ref{eq5.21.1}) is not difficult to check, with the argument being similar to the 
argument for PS networks just mentioned.  On the other hand, equation (\ref{eq5.21.1}) will hold for
general service distributions in the setting obtained by restricting the state space $S$ by suppressing 
information on the service times of jobs for which service has not yet begun; this setting was mentioned at
the end of Section 5.  The argument in this setting proceeds as before.

Families of measures $(\mathcal{E}^{(a)})^{\prime}$, $a \in \mathcal{A}$, need not be tight for arbitrary service 
disciplines.  This is the case for LIFO service disciplines, even when $N^{(a)}=K^{(a)}\equiv 1$.   
For example, consider a family of networks $\mathcal{A} = \{5,6,\ldots,N,\ldots\}$ having service distributions
$F^{(N)}(\cdot)$ with $\mu^{(N)} \equiv 2$ and point masses of size at least $1/3$ at $1$, and having interarrival 
distributions $G^{(N)}(\cdot)$ with $\alpha^{(N)} \equiv 1$ and support on $(0,2]$, and with point masses of size 
at least $1/2$ at $1 - 1/N$.  With probability at least $1/2$,
an arriving job will immediately begin receiving service that continues until its residual service time 
is reduced to $1/N$, at which time
its service is taken over by a new arrival.  Using this, one can show that, for each $N$, the probability under
the corresponding equilibrium measure $\mathcal{E}^{(N)}$ of there being at least one job with residual service time 
at most $1/N$ is at least $1/25$. This contradicts (\ref{eq5.21.1}) and hence contradicts tightness.

If on the other hand, for a family of networks with LIFO service disciplines, the arrival streams are Poisson, it seems 
clear that (\ref{eq5.21.1}) will hold, although the author does not see how to show this.      

A family of probability measures on a metric space  is \emph{relatively compact} if, for each sequence 
drawn from the family, there is a subsequence that converges to some probability measure on the space.  It follows from
Prohorov's Theorem that a tight family of measures is automatically relatively compact (see, e.g., \cite{Bi}).  Combining
this with the preceding discussion of the projected measures $(\bar{\mathcal{E}}^{(a)})^{\prime}$ and
$(\mathcal{E}^{(a)})^{\prime}$ on common projections $\bar{S}^{\prime}$, respectively $S^{\prime}$, for a family
 of JSQ networks, one obtains the following conclusion from Theorem \ref{thm1.17.2}. 
\begin{thm}
\label{thm5.22.1}
Suppose that a family $\mathcal{A}$ of JSQ networks, with $K^{(a)}\equiv K$, satisfies the uniformity conditions
(\ref{eq1.16.2new})--(\ref{eqN7.1}) and (\ref{eq1.17.1}), and that for each network $a\in{\mathcal{A}}$,
$A_{M} = \{x: \|x\|^{(a)} \le M\}$ is petite for each $M>0$ with respect to the norms in (\ref{eq1.16.4}).  Then
the projected measures $(\bar{\mathcal{E}}^{(a)})^{\prime}$ on each common projection $\bar{S}^{\prime}$ of
the family $\mathcal{A}$ are relatively compact.  Moreover, if (\ref{eq5.21.1}) also holds, then the projected
measures $(\mathcal{E}^{(a)})^{\prime}$ on each common projection $S^{\prime}$ are relatively compact.
\end{thm}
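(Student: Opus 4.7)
The plan is to reduce the relative compactness assertion to tightness of the projected families, and then to derive tightness directly from Theorem \ref{thm1.17.2} together with the additional hypothesis (\ref{eq5.21.1}) in the second claim. Since $\bar{S}^{\prime}$ and $S^{\prime}$ are separable metric spaces (with the metrics inherited from (\ref{eq2.3.1}) via projection), Prohorov's Theorem tells us that relative compactness of a family of Borel probability measures is equivalent to its tightness. So the whole argument reduces to producing, for each $\epsilon > 0$, a compact set $B \subset \bar{S}^{\prime}$ (respectively, $B \subset S^{\prime}$) that carries mass at least $1 - \epsilon$ under every $(\bar{\mathcal{E}}^{(a)})^{\prime}$ (respectively, every $(\mathcal{E}^{(a)})^{\prime}$), uniformly in $a \in \mathcal{A}$.

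First I would verify that the sets $\bar{E}_M \subset \bar{S}^{\prime}$, defined by the constraints $z_n \le M$, $\lsm_{n,i} \le M$, $0 \le w_{n,i} \le M$ (all $n=1,\dots,N^{\prime}$, $i=1,\dots,z_n$), and $1/M \le s_k \le M$ (all $k=1,\dots,K$), exhaust $\bar{S}^{\prime}$ as $M\to\infty$ in the sense required by tightness. Theorem \ref{thm1.17.2} controls four of these constraints uniformly in $a$: the bound (\ref{eq1.18.1}) on $|X|_n^{(a)}$ simultaneously pushes $\mathcal{E}^{(a)}(Z_n > M)$, $\mathcal{E}^{(a)}(\Lbig_n^{(a)} > M)$, and $\mathcal{E}^{(a)}(W_n^{(a)} > M)$ to $0$ as $M\to\infty$, and (\ref{eq1.18.2}) does the same for $\mathcal{E}^{(a)}(S_k^{(a)} > M)$. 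Only the lower bound $s_k \ge 1/M$ is not in Theorem \ref{thm1.17.2}, and it is handled by the elementary renewal observation already given in the discussion before the theorem: in equilibrium, the probability of an arrival in $(0, 1/(\alpha_k M))$ from stream $k$ is at most $1/M$, which forces $\mathcal{E}^{(a)}(S_k^{(a)} < 1/M) \le 1/M$ uniformly in $a$. Summing the finitely many tail bounds (over $n \le N^{\prime}$, $k \le K$, and the two sides of $s_k$) gives $(\bar{\mathcal{E}}^{(a)})^{\prime}(\bar{E}_M) \ge 1 - \epsilon$ for $M$ large enough, independently of $a$; since the $\bar{E}_M$ are compact in $\bar{S}^{\prime}$ (recorded in Section 2), this is tightness.

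For the stronger second statement, the target sets are $E_M \subset S^{\prime}$, which differ from $\bar{E}_M$ only by additionally requiring $w_{n,i} \ge 1/M$, i.e.\ by keeping residual service times bounded away from $0$. The only new estimate needed is a uniform lower tail bound on the smallest weighted residual service time at each queue, which is precisely what (\ref{eq5.21.1}) supplies: under that hypothesis, $\sup_{a} \mathcal{E}^{(a)}(W_{n,i}^{(a)} < 1/M \text{ for some } i,\, Z_n > 0) \to 0$. Adding this tail bound to the ones used in the $\bar{S}^{\prime}$ case yields $(\mathcal{E}^{(a)})^{\prime}(E_M) \ge 1 - \epsilon$ uniformly in $a$, and since the $E_M$ are compact in $S^{\prime}$ (again from Section 2), Prohorov's theorem delivers relative compactness on $S^{\prime}$.

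Essentially no step here is technically delicate: given Theorem \ref{thm1.17.2}, the argument is bookkeeping. The one place where a little care is needed is the lower bound on $s_k$ in the $\bar{S}^{\prime}$ case, which must be derived outside of Theorem \ref{thm1.17.2} from the stationarity of $\mathcal{E}^{(a)}$ and the renewal description of the arrival streams; the mild subtlety is that this bound uses the definition $s_k = \alpha_k u_k$ to cancel the dependence on $\alpha_k^{(a)}$. The rest is an application of Prohorov's Theorem to a finite union of tail-bound inequalities.
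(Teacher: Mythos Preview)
Your proposal is correct and follows essentially the same route as the paper: the paper also derives Theorem \ref{thm5.22.1} by invoking Theorem \ref{thm1.17.2} to get tightness on the compact sets $\bar{E}_M$ (handling the lower bound $s_k \ge 1/M$ via the same equilibrium renewal observation you use), then adding (\ref{eq5.21.1}) to pass to the sets $E_M$ in $S^{\prime}$, and finally applying Prohorov's Theorem. The only cosmetic difference is ordering---the paper establishes tightness in the discussion preceding the theorem and then states the theorem as an immediate consequence of Prohorov, whereas you present the reduction to tightness first.
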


By restricting the family $\mathcal{A}$ of networks under consideration, one obtains the following
analog of Corollary \ref{corN8.1}. 
\begin{cor}
\label{corN45.1}
Suppose that each member of a family $\mathcal{A}$ of JSQ networks has a single Poisson arrival stream, that
$F_{j}^{(a)}$ does not depend on $j$ or $a$, that the selection rules are mean field and have uniformly 
bounded support, and that (\ref{eqN8.2}) holds.  Then
the projected measures $(\bar{\mathcal{E}}^{(a)})^{\prime}$ on each common projection $\bar{S}^{\prime}$ of
the family $\mathcal{A}$ are relatively compact.  Moreover, if (\ref{eq5.21.1}) also holds, then the projected
measures $(\mathcal{E}^{(a)})^{\prime}$ on each common projection $S^{\prime}$ are relatively compact.
\end{cor}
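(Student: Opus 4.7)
The plan is to reduce Corollary \ref{corN45.1} to Theorem \ref{thm5.22.1} by verifying all of its hypotheses under the present restricted assumptions, in essentially the same manner in which Corollary \ref{corN8.1} was derived from Theorem \ref{thm1.17.2}. The only substantive addition over that earlier reduction is the verification that the sets $A_M$ are petite, which will follow from the Poisson structure.

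First I would check the uniformity conditions (\ref{eq1.16.2new})--(\ref{eqN7.1}). Since $F_{j}^{(a)}(\cdot)$ does not depend on $j$ or $a$ and has finite mean, (\ref{eq1.16.2new}) is immediate. For the single Poisson arrival stream with rate $\alpha^{(a)}$, a direct computation gives
\begin{equation*}
\alpha^{(a)} \int_{M/\alpha^{(a)}}^{\infty} y\, G^{(a)}(dy) = (1+M)e^{-M},
\end{equation*}
which is independent of $a$ and tends to $0$, verifying (\ref{eq1.16.1new}). Because all mean service times agree, the ratio in (\ref{eq1.16.3b}) equals $1$ in the station independent formulation and that condition is trivial. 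The mean field selection rule, together with the common service distribution, makes each network symmetric in the sense of (\ref{eq1.17.1}); as remarked just before (\ref{eq1.11.2}), the traffic intensity then reduces to $\alpha^{(a)} m / N^{(a)}$, so (\ref{eqN8.2}) gives precisely (\ref{eq1.16.3}). For (\ref{eqN7.1}), let $D_{\text{max}}$ denote the uniform bound on $|B|$ coming from uniformly bounded support. Potential arrivals at any queue $n$ form a Poisson process of rate $\alpha^{(a)} D^{(a)}/N^{(a)} \le D_{\text{max}}/m$ by (\ref{eqN8.2}); since $(m^{\text{max}})^{(a)} = m$, the probability of no potential arrivals over $(0, mt]$ is at least $e^{-D_{\text{max}} t}$, so $\Gamma = 0$ with $h(t) = e^{-D_{\text{max}} t}$ works uniformly in $a$.

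Next I would verify petiteness of each $A_M$. Since the arrival stream is Poisson, $\bar{G}^{(a)}(y) = e^{-\alpha^{(a)} y} > 0$, so (\ref{eq1.14.2}) holds, and $G^{(a)}$ has a continuous Lebesgue density, so (\ref{eq1.15.1}) holds with $\ell_k = 1$. The reasoning sketched after the statement of Corollary \ref{cor1.15.2} then shows that each $A_M$ is petite with respect to the norm (\ref{eq1.16.4}).

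With every hypothesis of Theorem \ref{thm5.22.1} now in place, both conclusions of the corollary follow immediately: relative compactness of $(\bar{\mathcal{E}}^{(a)})^{\prime}$ on each common projection $\bar{S}^{\prime}$ is unconditional, while relative compactness of $(\mathcal{E}^{(a)})^{\prime}$ on $S^{\prime}$ follows upon additionally assuming (\ref{eq5.21.1}). There is no genuine obstacle in the argument; the mildly delicate point is ensuring that $\Gamma$ and $h(\cdot)$ in (\ref{eqN7.1}) can be chosen uniformly in $a$, which is handled by combining the uniform support bound $D_{\text{max}}$ with the uniform upper bound on $\alpha^{(a)}/N^{(a)}$ provided by (\ref{eqN8.2}).
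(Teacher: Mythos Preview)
Your proposal is correct and follows precisely the route the paper intends: the corollary is stated immediately after Theorem \ref{thm5.22.1} as the analog of Corollary \ref{corN8.1}, obtained by restricting the family $\mathcal{A}$ and checking the hypotheses of Theorem \ref{thm5.22.1}, with petiteness coming from (\ref{eq1.14.2}) and (\ref{eq1.15.1}) for Poisson arrivals. Your verification of (\ref{eqN7.1}) via the explicit Poisson thinning rate $\alpha^{(a)} D^{(a)}/N^{(a)} \le D_{\max}/m$ is more detailed than what the paper writes out, but it is exactly the content behind the paper's remark that uniformly bounded support together with a single Poisson stream yields (\ref{eqN7.1}).
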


\section{A family of JSQ networks with large workload}
In Section 6, we demonstrated Theorem \ref{thm1.17.2}.  The limit (\ref{eq1.18.1}) there states that 
in equilibrium, at each queue, 
the tails for the distribution on the number of jobs, their weighted ages and the weighted workload 
can be bounded uniformly for general families of networks.  This bound does not depend on the service discipline.
If one examines the proof of the theorem, one sees that the bounds that are obtained for the weighted workload 
are actually extremely weak.  The term $M^{\prime}$ in the definition of $\tilde{\gamma}_{w}^{(a)}(\cdot)$
in (\ref{eq5.11.2}) is given by $M^{\prime} = \elly$, with $\elly$ being defined in (\ref{eq5.14.1}) in terms of
the sequence $w(1),w(2),\ldots$ and $\ellw$.  As noted after (\ref{eq4.31.1}), $w(i)$ will often increase very rapidly.  In 
fact, one can check that, for Poisson arrival streams and service distributions having any given number of moments, 
$w(i)$ can grow like 
\begin{equation}
\label{eq7.1.1}
w(i+1) \ge e^{bw(i)},
\end{equation}
where $b>0$ depends on the number of moments.  The growth of $M^{\prime}$ in terms of $M$, with $M = \ellw$, will therefore be 
far too rapid to infer anything useful quantitatively about the tail of the distribution of the weighted 
workload in equilibrium for a member of the family of networks.

Rather than providing quantitative information, the purpose of the uniform bounds in (\ref{eq1.18.1}) of 
Theorem \ref{thm1.17.2} was to establish
tightness of the distributions on the number of jobs, weighted ages and weighted workload  
at a queue.  This was discussed in Sections 1 and 6.  One can, however, ask whether the rapid growth exhibited in 
(\ref{eq7.1.1}), and hence by $\elly$, is an artifact of the proof or whether similar bad behavior is in fact possible for 
the workloads of subcritical JSQ queueing networks.  This point is of course relevant in deciding whether it is always
advantageous to employ the JSQ rule for assigning arriving jobs, as opposed to, say, randomly choosing the 
queue, e.g., letting $D = 1$, in the setting of Section 1.

In this section, we present a family of networks for which the weighted workload exhibits bad behavior in 
an extreme manner that is of the order as that suggested by (\ref{eq7.1.1}).  The structure of the networks in the 
family is elementary in most aspects.  Each network in the family possesses a single Poisson arrival 
stream and two queues to which jobs are directed; the state space $S$ is therefore the same for each network. The 
selection set always consists of both queues, and the service
distributions are both class and station independent.  When the two queues have equal numbers of jobs, an arriving job
will choose each of them with probability $\frac{1}{2}$.  The rates of the
Poisson arrivals will depend on the network as will the service distributions, but the traffic intensity for the 
different networks will be uniformly bounded away from one.  The service discipline will be the
same for all networks, but the discipline is concocted so as to produce inefficient service.  In particular,
service will be \emph{nonlocal} in the sense that the choice of which jobs to serve will depend on the entire state of
the network.  

As we will see, this discipline will cause certain jobs with large service times to be served
very slowly, with service being concentrated on the jobs with shorter service times at that queue, which therefore
quickly leave the queue.  The other queue will, during such times, serve its
jobs with longer service times first, which causes it to accumulate jobs with short service times.  The presence of these
unserved jobs induces arriving jobs to be directed to the first queue.  Since these arriving jobs at the first queue
will be served before
a job with large service time is served, this slows down the rate at which such a job is served.  Before
such a job with large service time receives all of its service, with high probability, a job with
substantially greater service time will enter the queue, which causes the workload at the queue to grow.
Iterating this behavior produces the growth in workload in which we are interested.
This behavior occurs with few jobs at the queue in relation to its workload.          
  
The remainder of the section is organized as follows.  We first complete the description of 
the family of JSQ networks whose description was begun in the next to last paragraph.  
We then state Theorem \ref{prop7.3.1}, which describes the behavior of the weighted workload, in equilibrium, of 
these networks.  The proof of the theorem consists of three parts and includes an induction argument on the
size of a quantity related to the weighted workload.    


The family of queues is indexed by $\epsilon$, with $\epsilon > 0$.  Rather than directly give the service
distributions and Poisson intensity of arrivals, we specify them as follows.  The service distribution 
$F^{(\epsilon)}(\cdot)$ of jobs at each network is assumed to be discrete, having point masses at 
$h(0), h(1),h(2),\ldots$, with $h(0) =\delta \stackrel{\text{def}}{=} \gamma_0 \epsilon$, 
for $\gamma_0 \in (0, \frac{1}{200}]$, $h(1) = 1$, $h(2)\in 2\mathbb{Z}_+$ with $h(2) \ge c_1$, where $c_1\ge 100$  
will be specified in the proof of Proposition \ref{lem7.3.3} and does not depend on 
$\epsilon$ or $\gamma_0$, $h(3) = (h(2))^3$, and
\begin{equation}
\label{eq7.1.2}
h(i+1) = e^{\sqrt{h(i)}} \qquad \text{for } i = 3,4,\ldots;  
\end{equation}
we will restrict the family to indices $\epsilon$ with $\epsilon \le 1/(h(3))^5$.  (For convenience, we set
$h(-1) = 0$.)  We will refer to jobs with (initial) service time $\delta$ as \emph{quick}, 
those with service time $1$ as \emph{moderate}, and those with service time $h(i)$, $i\ge2$, as \emph{large}.
Jobs with these service times are assumed to arrive in the network at rate 
$2/\epsilon$ for $h(0)$, $2(1-\eta)$ for $h(1)$, where $\eta \in (0,\frac{1}{100}]$, 
and at rate $2(h(i))^{-i}$ for $h(2),h(3),\ldots$.

One can check that the traffic intensity $\rho$ is given by
\begin{equation}
\label{eq7.2.1}
\rho = \gamma_0 + 1 - \eta + \sum_{i=2}^{\infty} (h(i))^{1 - i}.
\end{equation}
We require that
\begin{equation}
\label{eq7.2.2}
\rho \le 1 - \eta/2,
\end{equation}
which is easy to do, because of the freedom in our choices for $\delta$ and $h(2)$.  Note that the arriving
moderate jobs require most of the work.  Because of the growth of the exponents in the arrival rates
for $h(i)$ as $i \rightarrow \infty$, the distribution functions $F^{(\epsilon)}(\cdot)$ have all moments.  Note that as 
$\epsilon \searrow 0$, the mean of the service time goes to $0$.  This, together with the fixed arrival rates for
moderate and large jobs, implies that the uniformity condition (\ref{eq1.16.2new}) in Theorem \ref{thm1.17.2} is not satisfied.

We still need to specify the service discipline; as mentioned above, it depends on the entire state of the network.
We first introduce some terminology.  
At each time, one of the queues will be the \emph{designated queue}, with the other being the \emph{other queue}.
Provided the network is not empty, the designated queue will not be empty, in which case one of its jobs 
will be the \emph{designated job}.  We denote by
$Y^{(\epsilon)}(t)$ the residual service time at time $t$ of the designated job; when both queues are empty, set
$Y^{(\epsilon)}(t) = 0$. 
 
At $t=0$, if the network is not empty, we choose one of the nonempty queues as the designated queue and a job with
the largest residual service time for that queue as the designated job.  
This queue will remain the designated queue until it is empty and the other queue is not.

As time increases, a job becomes
the designated job upon its arrival at the designated queue, if its service time is larger than or equal to the residual
service time of the current designated job (and automatically becomes the designated job if the queue is empty).  
Only new jobs can become designated jobs, with the exception being when service of a designated job is completed 
(that is, the job leaves the network).  
It will follow from the service discipline given in the next paragraph that the queue must then be empty; the other
queue (if not empty) then becomes the designated queue, with
the job with the largest residual service time at the queue becoming the 
designated job.  In order to indicate the designated queue and job, an 
additional coordinate needs to be added to the state space $S$ (which we continue to denote by $S$).

The allocation of service is specified as follows.  At the designated queue, the designated job will only be served
when there are no other jobs there.  When a job arrives or departs from the network, among the nondesignated jobs, the 
job with the shortest residual service time at the 
designated queue receives all of the service at the queue; this service continues until the next arrival or departure
from the network.  At the other queue, upon an arrival or departure, the job with the longest residual service 
time receives all of the service, with the job continuing to receive this service until the next arrival or 
departure.  When the designated queue and other queue switch, the service rules also switch.       
One can check that the designated job has the largest residual service time among jobs at its queue, although
not necessarily among jobs in the entire network.

By (\ref{eq7.2.2}), each member of the family of JSQ networks defined above is subcritical.  Moreover, since the
arrival stream is Poisson, the conditions (\ref{eq1.14.2}) and (\ref{eq1.15.1})  are satisfied.  Consequently, by Corollary
\ref{cor1.15.2}, the Markov process $X^{(\epsilon)}(\cdot)$ underlying each network is positive Harris recurrent, and
so has an equilibrium measure $\mathcal{E}^{(\epsilon)}$.  The following result gives a lower bound on the 
distribution of the sum $W_{1}^{(\epsilon)} + W_{2}^{(\epsilon)}$ of the weighted workloads of 
$\mathcal{E}^{(\epsilon)}$.  (As shown in the proof, the same bounds hold for the sum of the unweighted
workloads.)  Here, we set $\gamma_1 = \frac{1}{2000}$.   
\begin{thm}
\label{prop7.3.1}
For the family of networks defined above, 
\begin{equation}
\label{eq7.3.2}
\mathcal{E}^{(\epsilon)}\left(W_{1}^{(\epsilon)} + W_{2}^{(\epsilon)} \le h(\lfloor\gamma_1/\epsilon\rfloor)\right) 
\le 1/h(\lfloor\gamma_1/\epsilon\rfloor). 
\end{equation}
\end{thm}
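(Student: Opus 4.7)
The plan is to use an induction on the ``level'' $i$ of the designated job's initial service time $h(i)$ to show that the equilibrium mass concentrates on states with extremely large workload. Write $N = \lfloor \gamma_1/\epsilon \rfloor$. For each $i \in \{2,3,\ldots,N\}$, define $G_i$ to be the set of states in which the designated job has initial service time at least $h(i)$. Since the total weighted workload dominates the residual service time of any individual job present, $G_N$ is contained in $\{W_1^{(\epsilon)} + W_2^{(\epsilon)} > h(N)\}$, so it suffices to show $\mathcal{E}^{(\epsilon)}(G_N) \ge 1 - 1/h(N)$.

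First I would analyse the macroscopic dynamics of the two queues to control the service rate of the designated job. The mechanism described informally in the paper should be formalised quantitatively: SRPT at the designated queue sends essentially all service effort to incoming quick and moderate jobs, while LRPT at the other queue causes quick jobs (arriving at the huge rate $2/\epsilon$) to accumulate there because moderates are always served first. These accumulated quicks make the other queue longer in job-count than the designated queue for long stretches of time, so that JSQ routes virtually every new arrival into the designated queue, keeping it permanently non-empty. The key technical output of this analysis is a bound $q_i$ on the instantaneous service rate delivered to the designated job of type $i$, with $q_i$ super-exponentially small in $\sqrt{h(i)}$. A fluid-scale analysis of the coupled processes $(Z_1,Z_2)$ tracking the feedback between queue lengths and routing is the right tool, but the delicate part is controlling the interaction of the two disparate time scales (quick $O(\epsilon)$ versus moderate $O(1)$) during the oscillatory regime the system enters.

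Second, I would combine the bound on $q_i$ with the exponential arrival of type-$(i+1)$ jobs at rate $2(h(i+1))^{-(i+1)}$ to estimate the conditional probability of the transition $G_i \to G_{i+1}$. If the designated job is served at rate at most $q_i$ and has work $h(i)$ remaining, then the probability that a type-$(i+1)$ arrival (which, being larger, supplants the current designated job on arrival) occurs before completion is at least $1 - \exp\bigl(-2 h(i)/(q_i (h(i+1))^{i+1})\bigr)$. The recursion $h(i+1) = e^{\sqrt{h(i)}}$, together with the quantitative bound on $q_i$, is calibrated so that this probability exceeds $1 - 1/(h(i+1))^{i+2}$. Chaining the transitions from $i=2$ up to $N-1$ via the strong Markov property, a union bound gives
\[
P_x(G_N \text{ reached} \mid G_2) \ge 1 - \sum_{i=2}^{N-1} \frac{1}{(h(i+1))^{i+2}} \ge 1 - \frac{1}{2 h(N)},
\]
using the super-exponential growth of $h(\cdot)$.

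Third, I would convert this pathwise estimate into the equilibrium statement. By Corollary \ref{cor1.15.2} the underlying Markov process is positive Harris recurrent, so the ergodic theorem identifies $\mathcal{E}^{(\epsilon)}(G_N)$ with the long-run time-average of $\mathbf{1}\{X^{(\epsilon)}(t) \in G_N\}$. Using the regenerative structure over excursions from the empty state, the expected sojourn time inside $G_N$ during an excursion (bounded below by the typical time between type-$(N+1)$ arrivals, of order $(h(N+1))^{N+1}$) dominates the expected time spent building up to $G_N$, and a base-case estimate shows $\mathcal{E}^{(\epsilon)}(G_2)$ is bounded away from $0$. Combined with the chained transition bound, this yields $\mathcal{E}^{(\epsilon)}(G_N) \ge 1 - 1/h(N)$, which is (\ref{eq7.3.2}).

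The main obstacle is the first step: making rigorous the super-small upper bound on the designated queue's idle fraction. Naively, since each server has load $1-\eta/2 < 1$, the idle fraction is only of order $\eta$, which is far too large to drive the cascade. The bound must instead exploit the combined effect of the SRPT--LRPT asymmetry and the JSQ feedback, which together generate oscillations of length $O(1/\epsilon)$ during which the designated queue is never empty; squeezing out the correct exponentially small bound on the residual idle time is intricate, and this is presumably why the author indicates that only a condensed proof is provided in the paper.
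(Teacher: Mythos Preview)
Your intuition about the mechanism is correct, and your outline of step 3 (converting a pathwise transition estimate into an equilibrium statement via ergodicity) is close in spirit to what the paper does, although the paper carries this out by comparing the level process $H_1^{(\epsilon)}(n)$ (tracking which $h(i)$ the designated job sits at) to an explicit birth--death chain with upward bias, rather than via regenerative excursions from the empty state.

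The genuine gap is in your proposed induction. You want to bound directly, for each level $i$, the service rate $q_i$ delivered to the designated job, and then run a race against the Poisson clock of type-$(i+1)$ arrivals. But at level $i$ the state already contains stray jobs of sizes $h(2),\ldots,h(i-1)$ and possibly an ``associate'' job of size $h(i)$; the routing and service dynamics are not the same as at level $2$, and your proposal gives no mechanism for transferring the base-case analysis upward. The paper handles exactly this by introducing a one-parameter family of \emph{handicapped} processes $X^{(\epsilon,\kappa)}$, in which an arriving job compares $Z_d + \kappa$ to $Z_o$; the extra designated job at level $i+1$ is absorbed into a unit increase of the handicap, and the level-$(i+1)$ process is then coupled pathwise to a level-$i$ process with handicap $\kappa+1$, so that the inductive hypothesis applies verbatim. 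This coupling, not a direct $q_i$ bound, is the key idea. Relatedly, your ``main obstacle'' is slightly misdiagnosed: the paper does not squeeze an exponentially small idle fraction out of an oscillation analysis. Instead it shows (event $A_8(n)$ in Proposition~\ref{lem7.3.3}) that at least $n$ moderate-or-large jobs, each of size $\ge 1$, arrive at the designated queue over $(0,n]$; hence the work arriving there has rate at least $1$, the non-designated workload never decreases, and the designated job is simply never served on the good event. The naive ``load $1-\eta/2$'' heuristic fails not because of oscillations but because JSQ, combined with the SRPT/LRPT asymmetry, routes more than half the moderate work to the designated queue.
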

On account of the recursion for $h(i)$ given by (\ref{eq7.1.2}), $h(\lfloor\gamma_1/\epsilon\rfloor)$ will be 
enormous when $1/\epsilon$ is a moderate multiple of $2000$, and in particular grows much more rapidly 
than $1/\epsilon$; presumably, this rapid growth also holds for much smaller values than $2000$.  The
square root in (\ref{eq7.1.2}) is chosen for convenience; it can be replaced by any power strictly less than $1$, if
$h(3)$ is replaced by a correspondingly higher power of $h(2)$.  Note that the rate of growth given here is 
somewhat slower than that of $w(i)$ in (\ref{eq7.1.1}).    

It follows from (\ref{eq7.3.2}) that the weighted workload under $\mathcal{E}^{(\epsilon)}$ is typically at least of 
order $h(\lfloor\gamma_1/\epsilon\rfloor)$.  This contrasts with the mean weighted workload for the equilibria of the 
networks obtained by setting $D=1$, which is of order $1/\epsilon$.  (The mean workload for the equilibria 
of those networks is bounded over all $\epsilon$.)

Theorem \ref{prop7.3.1} holds because, when a designated queue has designated job with residual service
time $h(i)$, a job with service time $h(i+1)$ is more likely to arrive at the queue before the residual service time of the 
designated job decreases to $h(i-1)$, provided that $i \le \gamma_1 /\epsilon$.  
One can therefore compare $X^{(\epsilon)}(\cdot)$ with a discrete time birth-death process on 
$0,1,\ldots,\lfloor\gamma_1 /\epsilon\rfloor + 1$, with uniform positive drift.  The comparison ceases to be valid when 
the designated queue has too large a multiple of $1/\epsilon$ jobs.  

\subsection*{Proof of Theorem \ref{prop7.3.1}} In order to make the preceding paragraph precise, 
we will employ a recursion argument that requires two propositions.  The first proposition asserts
that the above behavior occurs for $i=2$; the second proposition employs the first proposition and an induction argument
to show this behavior for general $i$.  Since the proof of the first proposition is fairly long and involves a substantial
number of estimates, we give a condensed proof of it. 

For these propositions, we need to employ a modification of the process $X^{(\epsilon)}(\cdot)$.  
We define a new family of Markov processes $X^{(\epsilon, \kappa)}(\cdot)$, $\kappa = 0,1,\ldots,K^{(\epsilon)}$, with 
$K^{(\epsilon)} = \lfloor\gamma_{1}/\epsilon\rfloor$, 
where $X^{(\epsilon, \kappa)}(\cdot)$ evolves in the same
manner as $X^{(\epsilon)}(\cdot)$, but with the following modification of the JSQ rule: 
an arriving job at time $t$ selects the queue with the smaller value of $Z_{d}^{(\epsilon,\kappa)}(t-) + \kappa$ 
and $Z_{o}^{(\epsilon,\kappa)}(t-)$, where $Z_{d}^{(\epsilon,\kappa)}(\cdot)$ and
$Z_{o}^{(\epsilon,\kappa)}(\cdot)$ denote the number of jobs at the designated queue and other queue for the network indexed by
$(\epsilon,\kappa)$.  The term $\kappa$ will serve as a ``handicap" for the designated queue.  As before, we denote by
$Y^{(\epsilon,\kappa)}(\cdot)$ the residual service time of the designated job.  Note that, for $\kappa = 0$, the queue
selection rule is JSQ and $Y^{(\epsilon,0)}(\cdot) = Y^{(\epsilon)}(\cdot)$.  
    
Proposition \ref{lem7.3.3} gives the following uniform bound on $Y^{(\epsilon,\kappa)}(\cdot)$ over all $\kappa\le K^{(\epsilon)}$.
We denote by $y$ the residual service time of the designated job for a state $x \in S$.  
\begin{prop}
\label{lem7.3.3}
Suppose that $x\in S$ is any state for which $y  = h(2)$.   
Let $T$ denote the stopping time at which either $Y^{(\epsilon,\kappa)}(T) = h(1)$ or $Y^{(\epsilon,\kappa)}(T) = h(\ell)$,
for $\ell \ge 3$, first occurs.  
Then, for $\kappa \le K^{(\epsilon)}$, 
\begin{equation}
\label{eq7.3.4}
P_{x}(Y^{(\epsilon,\kappa)}(T) = h(1)) \le 1/h(2).
\end{equation}
%
%
\end{prop}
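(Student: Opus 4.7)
The plan is to estimate the race between two events: (i) the designated job is served down so that $Y^{(\epsilon,\kappa)}$ reaches $h(1)$, and (ii) a Poisson arrival of some job with service time $h(\ell)$, $\ell\ge 3$, lands at the designated queue, in which case that job becomes the new designated job and $Y^{(\epsilon,\kappa)}$ jumps up to $h(\ell)$. The goal is to show (ii) wins with probability at least $1-1/h(2)$.

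The first step, and the heart of the argument, is to establish a lower bound on the real time $\sigma$ needed for the designated job to accumulate $h(2)-h(1)=h(2)-1$ units of service, uniformly in the handicap $\kappa\le K^{(\epsilon)}=\lfloor\gamma_1/\epsilon\rfloor$ and in the initial state $x$ with $y=h(2)$. I would combine three ingredients: the shortest-residual-first rule at the designated queue (under which the designated job, having the largest residual there, is served only when it is the only job at the queue); the longest-residual-first rule at the other queue (under which moderate jobs preempt one another and rarely complete, so that $Z_o$ grows); and the handicapped JSQ routing (which restores the flow of arrivals to the designated queue as soon as $Z_o$ exceeds $Z_d+\kappa$). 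Taken together, these force the designated queue to remain flooded with quick and moderate jobs for almost all of its lifetime, so that the cumulative service delivered to the designated job grows slowly. A Chernoff-type tail bound, obtained via coupling with auxiliary processes dominating $Z_d$ from above and $Z_o$ from below, should then give $P_x(\sigma\le h(2)^{10})\le 1/(2h(2))$, provided $c_1$, the uniform lower bound on $h(2)$, is chosen large enough.

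The second step is a Poisson estimate. The rate at which jobs of service time $h(\ell)$, $\ell\ge 3$, arrive to the network is at least $2(h(3))^{-3}=2h(2)^{-9}$, and a uniform positive fraction $q_{\min}$ is routed to the designated queue, by the same handicapped-JSQ balance argument used in the first step. Over the window $[0,h(2)^{10}]$ the number of such arrivals at the designated queue is stochastically dominated below by a Poisson random variable of mean at least $2q_{\min}h(2)$, so at least one such arrival occurs with probability at least $1-e^{-2q_{\min}h(2)}\ge 1-1/(2h(2))$ once $c_1$ is large enough. Combining with the first step by a union bound yields the desired $P_x(Y(T)=h(1))\le 1/h(2)$.

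The main obstacle is the first step, the blocking estimate. The pair $(Z_d,Z_o)$ is not Markovian by itself, is driven by a nonlocal state-dependent service discipline together with the handicapped JSQ routing, and the three Poisson arrival rates differ by factors of order $1/\epsilon$, so naive fluid or mean-rate calculations are too weak. The specific numerical constraints $\gamma_0\le\eta/2$, $\gamma_1=1/2000$ and $\epsilon\le 1/(h(3))^5$ appear to be engineered precisely so that, uniformly in $\kappa\le K^{(\epsilon)}$, the designated queue stays flooded by blocking jobs and the designated job cannot be drained on the time scale demanded by the second step. This is the bookkeeping which the author warns is ``fairly long and involves a substantial number of estimates'', motivating the condensed proof.
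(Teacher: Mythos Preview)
Your outline is essentially the paper's: a two-step race between draining the designated job and catching a large arrival, with the hard work isolated in the blocking estimate. The paper uses the window $t_0=(h(3))^4=h(2)^{12}$ instead of your $h(2)^{10}$, and organizes the blocking estimate as a chain of events $A_1(n),\dots,A_8(n)$ culminating in ``at least $n$ moderate or large jobs arrive at the designated queue over $(0,n]$,'' which keeps the \emph{work} there above $h(2)/2$; an induction on $n$ then bounds $P_x(A_1(t_0)^c)$ by $t_0 e^{-c_2 h(2)}$, and the large-arrival estimate finishes exactly as you describe.

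One point of imprecision worth correcting: the designated queue is not ``flooded with quick and moderate jobs.'' Under shortest-residual-first, quick jobs at the designated queue are dispatched immediately, and the paper in fact shows ($A_2(n)$) that $Z_d$ never exceeds $1/(1000\epsilon)$. What blocks the designated job is the steady inflow of moderate \emph{work}, not a large job count. It is the \emph{other} queue that gets flooded: longest-residual-first there keeps serving moderate jobs while quick jobs pile up at rate $2/\epsilon$, so $Z_o$ overtakes $Z_d+\kappa$ and the handicapped JSQ rule then routes almost all arrivals (including the moderate ones that do the blocking, and the rare $h(3)$-sized ones you need in Step~2) to the designated queue. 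Your sentence ``moderate jobs preempt one another and rarely complete'' is not the mechanism; moderate jobs do complete at the other queue, but during each such service spell (length $\ge 2/3$) the quick jobs there are untouched. Getting this picture right is what makes the coupling you propose actually go through, and it is also why the paper needs the constraint $\epsilon\le 1/(h(3))^5$: it guarantees $t_0\ll 1/\epsilon$, so the moderate/large job counts stay negligible compared to the quick-job scale at both queues.
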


The bound (\ref{eq7.3.4}) depends strongly on our construction of $X^{(\epsilon,\kappa)}(\cdot)$, where
the designated queue tends to receive more jobs than the other queue.  

Proposition \ref{lem7.4.1} generalizes 
Proposition \ref{lem7.3.3} to $h(i)$, $2\le i\le K^{(\epsilon)} + 1$, by applying 
Proposition \ref{lem7.3.3} together with an induction argument. 
The proposition is stated for $\kappa \le K^{(\epsilon)} - i + 1$,
although only the case $\kappa = 0$ is directly applied in the demonstration of Theorem \ref{prop7.3.1}. 
General $\kappa$ are needed for the comparison in the first paragraph of the proof in the induction argument.
\begin{prop}
\label{lem7.4.1}
Suppose that $x\in S$ is any state for which 
$y = h(i)$ for given $i$, with $2 \le i \le K^{(\epsilon)} + 1$.   
Let $T$ denote the stopping time at which either $Y^{(\epsilon,\kappa)}(T) = h(i-1)$ or $Y^{(\epsilon,\kappa)}(T) = h(\ell)$,
for $\ell \ge i+1$, first occurs.  
Then, for $\kappa \le K^{(\epsilon)} - i + 1$,  
\begin{equation}
\label{eq7.4.2}
P_{x}(Y^{(\epsilon,\kappa)}(T) = h(i-1)) \le 1/h(i).
\end{equation}
%
%
\end{prop}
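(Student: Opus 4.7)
The plan is to prove Proposition \ref{lem7.4.1} by induction on $i$, taking Proposition \ref{lem7.3.3} as the base case $i=2$. The inductive step is where essentially all the work lies: assuming \eqref{eq7.4.2} at level $i$ for every admissible $\kappa \le K^{(\epsilon)} - i + 1$, I want to derive it at level $i+1$ for every $\kappa \le K^{(\epsilon)} - i$. Note that the allowable range of $\kappa$ shrinks by exactly one as $i$ increases by one, which will account for the handicap acquired at each nesting level.

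For the inductive step, I would start from a state with $Y^{(\epsilon,\kappa)}(0) = h(i+1)$ and decompose the evolution up to $T$ into sub-excursions at level $i$. The designated job of service time $h(i+1)$ receives service only when its queue has no other jobs, so the relevant segment of $[0,T]$ is spent processing the quick, moderate, and smaller large jobs that keep arriving. Each sub-excursion is initiated when the auxiliary configuration at the designated queue resembles the initial configuration of a level-$i$ problem and is terminated when either $Y$ changes or the sub-excursion resolves in a controlled way. Within each sub-excursion I would apply the inductive hypothesis at level $i$ to a coupled process with handicap $\kappa+1$, the extra unit reflecting the blocking effect of the $h(i+1)$-designated job on ordinary service; the constraint $\kappa+1 \le K^{(\epsilon)} - i + 1$ is precisely $\kappa \le K^{(\epsilon)} - i$.

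The quantitative core is a Gambler's ruin-type comparison. While $Y = h(i+1)$, arrivals of jobs with service time $h(\ell)$ for $\ell \ge i+2$ occur at rate $2\sum_{\ell \ge i+2}(h(\ell))^{-\ell}$, and any such arrival forces $Y$ to jump upward, ending the excursion favorably. Meanwhile, the cumulative service received by the designated job before the next such arrival is what would need to exceed $h(i+1) - h(i)$ in order for $Y$ to drop. Using the super-exponential growth \eqref{eq7.1.2} together with the inductive hypothesis to control the fraction of time the designated queue is empty, I expect the probability of accumulating enough service to reach $h(i)$ before a sufficiently large arrival to be at most $1/h(i+1)$, closing the induction.

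The main obstacle I anticipate is making the sub-excursion decomposition rigorous under the nonlocal service discipline. Several complications must be handled simultaneously: switches of the designated queue when it empties and the other queue takes over, the interaction between arrivals at the other queue and the designated queue's configuration under the modified JSQ rule with handicap $\kappa$, and the bookkeeping of how handicaps propagate through nested sub-excursions. I would manage these by applying the strong Markov property at carefully chosen stopping times and by maintaining an inductive invariant that the effective handicap at each nesting level never exceeds the range permitted by the induction hypothesis. The individual probabilistic estimates should be routine once the decomposition is in place; it is the combinatorial tracking of handicap, designated-queue identity, and configuration type that I expect to be the principal difficulty.
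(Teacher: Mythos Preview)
Your inductive skeleton matches the paper: induction on $i$ with Proposition \ref{lem7.3.3} as the base case, and at each step a comparison with a level-$i$ process carrying one extra unit of handicap. The gap is in how you propose to use the level-$i$ hypothesis. It bounds an exit \emph{probability}, not a time fraction, so it does not directly control ``the fraction of time the designated queue is empty'' or the rate at which the $h(i+1)$-designated job accrues service. And a direct race of service accumulation against arrivals of size $\ge h(i+2)$ does not close: those arrivals have rate of order $h(i+2)^{-(i+2)}$, so the expected service received before one appears is far larger than $h(i+1)$.

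The paper's mechanism is an explicit coupling in which the first network (designated residual $h(i+1)$) and a second network (designated residual $h(i)$) receive service for their designated jobs simultaneously. The coupling runs in two phases: first with the same handicap $\kappa$ (the networks differ only in the designated residual), then, after an arrival of size exactly $h(i)$ at the designated queue, with handicap $\kappa+1$ on the second network (which now omits the $h(i+1)$-job entirely). In either phase, the induction hypothesis bounds the probability of a ``minor failure'' (the second network exits downward to $h(i-1)$) by $1/h(i)$, and each minor failure costs the $h(i+1)$-job at most $h(i)-h(i-1) < h(i)$ units of service. Hence roughly $h(i+1)/h(i)$ consecutive minor failures are needed before the level-$(i+1)$ process can exit downward, giving probability at most $(2/h(i))^{\lfloor h(i+1)/h(i)\rfloor -2}$. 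Arrivals of size $h(i+1)$ (not only $\ge h(i+2)$) act as ``minor successes'' that reset the designated job and give fresh independent attempts; each minor success is an outright success with probability at least $e^{-\sqrt{h(i+1)}}$ by \eqref{eq7.1.2}, and combining these two estimates yields \eqref{eq7.4.2} at level $i+1$. The idea you are missing is to read the level-$i$ hypothesis as a per-sub-excursion bound on \emph{service consumed} by the $h(i+1)$-job, via this same-service coupling, rather than as a bound on idle-time fractions.
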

We first prove Theorem \ref{prop7.3.1}, assuming Proposition \ref{lem7.4.1}.  As mentioned earlier,
the argument compares $Y^{\epsilon}(\cdot)$ to a birth-death process.
\begin{proof}[Proof of Theorem \ref{prop7.3.1} assuming Proposition \ref{lem7.4.1}]  
Theorem \ref{prop7.3.1} will follow from Proposition \ref{lem7.4.1} by using the
reasoning outlined in the paragraph before the beginning of this subsection. To see this, we first consider the 
sequence of stopping times $T(0),T(1),T(2),\ldots$ that are defined inductively as follows, when 
$Y^{(\epsilon)}(0) = h(i_0)$ for some $i_0 \ge 1$.  We set $T_0 = 0$ and, when $i_0 \ge 2$, denote by $T(1)$ the 
first time at which either 
$Y^{(\epsilon)}(T(1)) = h(i_0-1)$ or $Y^{(\epsilon)}(T(1)) = h(\ell)$, for $\ell \ge i_0+1$.  When $i_0 =1$, 
$T(1)$ denotes the first time at which $Y^{(\epsilon)}(T(1)) = h(\ell)$, for $\ell \ge 2$.  
The times $T(2),T(3),\ldots$ are defined analogously.  

For $n = 0,1,2,\ldots$, we define the function $H_{1}^{(\epsilon)}(n)$ by setting 
$H_{1}^{(\epsilon)}(n) = i \wedge (K^{(\epsilon)} + 1)$, 
where $i$ is the value at which $Y^{(\epsilon)}(T(n)) = h(i)$,
and denote by $\mathcal{G}_n = \mathcal{F}_{T(n)}$ the $\sigma$-algebra at this time.  It follows 
from the strong Markov property and Proposition \ref{lem7.4.1}, with $\kappa = 0$ and
$2 \le i \le K^{(\epsilon)} + 1$ that, for $H_{1}^{(\epsilon)}(n) = i$,
\begin{equation}
\label{eq7.5.1}
P(H_{1}^{(\epsilon)}(n+1) = i-1\,|\,\mathcal{G}_n) \le 1/4
\end{equation}
for all $\epsilon$; when $i=1$, the corresponding probability is $0$.

One can compare $H_{1}^{(\epsilon)}(\cdot)$ with the birth-death process $H_{2}^{(\epsilon)}(\cdot)$ on
$i= 1,2,\ldots,K^{(\epsilon)} + 1$, where
\begin{equation*}
\begin{split}
P(H_{2}^{(\epsilon)}(n+1) = i-1\,|\,H_{2}^{(\epsilon)}(n) = i) & = 1/4, \\
P(H_{2}^{(\epsilon)}(n+1) = i+1\,|\,H_{2}^{(\epsilon)}(n) = i) & = 3/4,
\end{split}
\end{equation*}
for $i\neq 1,K^{(\epsilon)}+1$, with the values $i$ instead of $i-1$ and $i$ instead of $i+1$ being taken at $1$ and
$K^{(\epsilon)}+1$, respectively.  In particular, under $H_{1}^{(\epsilon)}(0) = H_{2}^{(\epsilon)}(0)$, one can couple 
$H_{1}^{(\epsilon)}(\cdot)$ and  $H_{2}^{(\epsilon)}(\cdot)$ so that
\begin{equation}
\label{eqN51.1}
H_{1}^{(\epsilon)}(n) \ge  H_{2}^{(\epsilon)}(n) \qquad \text{for all } n.
\end{equation} 
Note that $H_{2}^{(\epsilon)}(\cdot)$ is Markov, but $H_{1}^{(\epsilon)}(\cdot)$ is not.

Let $\mathcal{E}_{2}^{(\epsilon)}$ denote the equilibrium measure on $\{1,2,\ldots,K^{(\epsilon)} + 1\}$ 
of the process $H_{2}^{(\epsilon)}(\cdot)$.  The process is reversible with respect to $\mathcal{E}_{2}^{(\epsilon)}$,
with
\begin{equation}
\label{eqN51.2}
\mathcal{E}_{2}^{(\epsilon)}(B) = 1/3, \qquad  \mathcal{E}_{2}^{(\epsilon)}(\{K^{\epsilon}+1\}) = 2/3,
\end{equation}
where $B = \{1,2,\ldots,K^{(\epsilon)}\}$.

Also, let $L_{N,1}^{(\epsilon)}(A)$, $L_{N,2}^{(\epsilon)}(A)$ denote the number of visits for $H_{1}^{(\epsilon)}(\cdot)$,
$H_{2}^{(\epsilon)}(\cdot)$ to a set $A$, over $1,\ldots,N$, starting from any initial state. By (\ref{eqN51.1})
and (\ref{eqN51.2}),
\begin{equation}
\label{eqN51.3}
\begin{split}
& \limsup_{N\rightarrow\infty} \frac{1}{N} L_{N,1}^{(\epsilon)}(B) 
\le \lim_{N\rightarrow\infty} \frac{1}{N} L_{N,2}^{(\epsilon)}(B) = 1/3, \\
& \liminf_{N\rightarrow\infty} \frac{1}{N} L_{N,1}^{(\epsilon)}(\{K^{(\epsilon)} + 1 \}) 
\ge \lim_{N\rightarrow\infty} \frac{1}{N} L_{N,2}^{(\epsilon)}(\{K^{(\epsilon)} + 1 \}) = 2/3
\end{split}
\end{equation}
both hold almost surely.

When $\mathcal{G}_n$ is given, with $H_{1}^{(\epsilon)}(n) = i \le K^{(\epsilon)}$, then $T(n+1) - T(n)$ is 
stochastically dominated by an exponentially distributed random variable with mean $\frac{1}{2}h(i+1)^{i+1}.$
On the other hand, when $\mathcal{G}_n$ is given, with $H_{1}^{(\epsilon)}(n) = K^{(\epsilon)} + 1$, then 
$T(n+1) - T(n)$ stochastically dominates a random variable with mean
$\frac{1}{8}h(K^{(\epsilon)}+2)^{K^{(\epsilon)}+2}$.  The upper bound is due to the rate at which jobs of size at
least $h(i+1)$ enter the network, as given below (\ref{eq7.1.2}).  The lower bound employs this rate, together with
(\ref{eq7.5.1}), to compare $T(n+1) - T(n)$, restricted to the outcome $H_{1}^{(\epsilon)}(n+1) = K^{(\epsilon)} + 1$, 
to the restriction of an exponentially distributed random variable.    
 
We let 
\begin{equation*}
U_{N}^{(\epsilon)}(A) = \int_{0}^{T(N)}\mathbf{1}\{Y^{(\epsilon)}(t) \in A \}\, dt
\end{equation*} 
denote the occupation time for $Y^{(\epsilon)}(\cdot)$ of the set $A$ up to the random time $T(N)$, starting
from a given initial state.  Recall that if $H_{1}^{(\epsilon)}(n) = i$, then $Y^{(\epsilon)} > h(i-1)$ for
$t \in [T(n),T(n+1))$.  It follows from this, (\ref{eqN51.3}), the two bounds in the previous paragraph, and the strong
law applied to the incremental times $T(n+1)-T(n)$ that 
\begin{equation*}
\begin{split}
& \limsup_{N\rightarrow\infty} \frac{1}{N} U_{N}^{(\epsilon)}([0,h(K^{(\epsilon)})]) 
\le \frac{1}{3}\cdot \frac{1}{2} \cdot h(K^{(\epsilon)} + 1)^{K^{(\epsilon)} + 1}, \\
& \liminf_{N\rightarrow\infty} \frac{1}{N} U_{N}^{(\epsilon)}((h(K^{(\epsilon)}), \infty)) 
\ge \frac{2}{3}\cdot \frac{1}{8} \cdot h(K^{(\epsilon)} + 2)^{K^{(\epsilon)} + 2}
\end{split}
\end{equation*}
hold almost surely. Since $U_{N}^{(\epsilon)}([0,\infty)) = T(N)$, it follows that
\begin{equation}
\label{eqN51.5}
\begin{split}
\limsup_{N\rightarrow\infty} \frac{U_{N}^{(\epsilon)}([0,h(K^{(\epsilon)})])}{T(N)} 
& \le \frac{2h(K^{(\epsilon)} + 1)^{K^{(\epsilon)} + 1}}{h(K^{(\epsilon)} + 2)^{K^{(\epsilon)} + 2}} \\
& \le 1/h(K^{(\epsilon)} + 2) \le 1/h(K^{(\epsilon)}).
\end{split}
\end{equation}
Since the process $X^{(\epsilon)}(\cdot)$ is ergodic, it follows from (\ref{eqN51.5}) that
\begin{equation}
\label{eqN51.6}
\mathcal{E}^{(\epsilon)}(Y^{(\epsilon)} \le h(K^{(\epsilon)})) \le 1/h(K^{(\epsilon)}).
\end{equation}

The residual service time $Y^{(\epsilon)}$ of the designated job is bounded above by the 
(unweighted) workload in the
network, which is at most $2 \epsilon(W_{1}^{(\epsilon)} + W_{2}^{(\epsilon)})$ since the mean service
time is less than $2\epsilon$.  Theorem \ref{prop7.3.1} therefore follows from (\ref{eqN51.6}) and the
definition of $K^{(\epsilon)}$, after dropping the $2\epsilon$ coefficient.
\end{proof}
We now prove Proposition \ref{lem7.3.3}.  The
proof for the proposition is fairly long and requires a number of steps, where one obtains various bounds on the number of
jobs entering each of the two queues.  For these bounds, we will repeatedly employ elementary large deviation bounds
involving sums of independent random variables.  We assume the reader is familiar with these types of estimates and
sketch the corresponding steps.  The first part of the proof, through (\ref{eq7.15.4a}), is given in detail.  

\begin{proof}[Condensed proof of Proposition \ref{lem7.3.3}]
Our basic reasoning will be as follows:  When the other queue is not empty, there will typically be either moderate or
large jobs there.  On account of the service discipline at the queue, the moderate and large jobs will be served before the
quick jobs, which allows quick jobs to accumulate.  This contrasts with service at the designated queue, 
where quick jobs are served first and so do not accumulate.  Therefore, because of the JSQ
rule, when there are jobs in the other queue, most arriving jobs will select the designated
queue.  Since there will be jobs in the other queue most of the time, this implies that the amount of work in the designated
queue will typically have a positive drift.  The probability will therefore be small that 
$Y^{(\epsilon,\kappa)}(t) \le \frac{1}{2}h(2)$ before time $t_0 \stackrel{\text{def}}{=} (h(3))^4 = (h(2))^{12}$.  
But by time $t_0$, there
will, with high probability, be many arriving jobs in the network with service time at least $h(3)$,
and hence at least one such large job will arrive at the designated queue.  So (\ref{eq7.3.4}) 
of the proposition will follow.

We first introduce some notation consisting of the events $A_1(n),A_2(n)$,
$\ldots,A_8(n)$, with $n=1,\ldots,t_0$, and the 
stopping time $T_0$.  
We denote by $A_1 (n)$ the event on which the amount of work present at the designated queue 
at time $n$ is at least
$\frac{1}{2} h(2)$ and at all previous times is at least $\frac{1}{2} h(2) - 1$.  Note that the designated queue remains
the same over $[0,n]$ on $A_1(n)$.  It is easy to see that, under $A_1(n)$,
$Y^{(\epsilon,\kappa)}(t) \ge \frac{1}{2}h(2) - 1$ for $t\le n+1$, that
\begin{equation}
\label{eq7.12.1}
A_1 (n)^c  =  \emptyset \qquad \text{for } n\le \frac{1}{2} h(2),
\end{equation}
and that $Y^{(\epsilon,\kappa)}(T) = h(1)$, with $T\le t_0$, can only occur when $A_1 (t_0)^c$ occurs, 
where $T$ is given in the statement of the proposition.  

All of the other events $A_2 (n),\ldots,A_8 (n)$ are assumed to be subsets of $A_1 (n)$ that, in addition, satisfy the
following properties.  The event $A_2 (n)$ occurs if the designated queue at no time in $[0,n]$ has more than
$1/(1000\epsilon)$ jobs; $A_3 (n)$ is the event for which the other queue at no time in $[0,n]$ has more than
$1/(500\epsilon)$ jobs that were not present at time $0$.  The event $A_4 (n)$ occurs if the subset of $[0,n]$ on which the other
queue is empty has Lebesque measure at most $\frac{5}{13}n$.  The event $A_5 (n)$ occurs if a total of at most $n/(200\epsilon)$
jobs ever arrive at the other queue at times in $(0,n]$ at which there is a job in the queue with residual service time strictly
greater than $\delta$; $A_6 (n)$ is the event on which at most $\frac{1}{100}n$ of these arriving jobs are moderate or large.  Let $T_0$ be
the first time at which the other queue has at most $1/(500\epsilon)$ jobs.  The event $A_7 (n)$ occurs if the subset of $[T_0,n]$
on which the other queue is not empty and has only jobs with residual service times at most $\delta$ has Lebesque measure
at most $\frac{1}{50}n$.  The event $A_8 (n)$ occurs if at least $n$ moderate or large jobs arrive at the designated queue over
$(0,n]$.  Except for (\ref{eq7.13.1}), where $A_8(n)$ is employed, the precise definitions of the events
$A_2(n),\ldots,A_8(n)$ will not be needed until the paragraph after (\ref{eq7.15.4a}).

One can check that
\begin{equation}
\label{eq7.13.1}
A_1 (n)^c \subseteq  A_8 (n-1)^c 
\qquad \text{for } n\ge \frac{1}{2} h(2).
\end{equation}
For this, note that $A_1(n-1)^c \subseteq A_8(n-1)^c$ and that if $\omega \in A_1(n-1) \cap A_1(n)^c$, then the amount
of work present in the designated queue at time $n$ is less than $\frac{1}{2}h(2)$, which implies $\omega \in A_8(n-1)^c$.  

In order to show (\ref{eq7.3.4}), we may assume that, at time $0$, the designated queue has only a 
single job, consisting of a designated job with residual service time $h(2)$.  Otherwise, 
one can wait until all jobs, except for the designated 
job, leave the queue, or a job with service time at least $h(3)$ arrives at the designated queue.  Since the designated job is served 
last, one or the other event must eventually happen.  If the former occurs, reset the time to $0$.

Let $B(n)$ be the subset of $A_1(n)$ on which no job with service time at least $h(3)$ arrives at the designated
queue by time $n$.
Most of the work in showing (\ref{eq7.3.4}) consists of showing the following bounds involving $B(t_0)$
and $A_{\ell} (n)$:
\begin{equation}
\label{eq7.15.1}
P_x(B(t_0)) \le 2 e^{-c_2 h(2)}
\end{equation}
and
\begin{equation}
\label{eq7.15.2}
P_x(A_1 (n)\cap A_{\ell}(n)^c) \le e^{-c_2 h(2)} \qquad \text{for } \ell = 2,\ldots,8,
\end{equation}
for integers $n\in [\frac{1}{2}h(2),t_0]$ and appropriate $c_2 > 0$ not depending on the choice of $c_1$, which is 
given before (\ref{eq7.1.2}). The inequalities (\ref{eq7.15.1}) 
and (\ref{eq7.15.2}), for $\ell =8$, will be applied
in the next paragraph.  The other inequalities in (\ref{eq7.15.2}) are needed to show this inequality.
We assume that $c_1$ is chosen large enough
so that $(c_{1}^{\prime})^{14} \le e^{c_{1}^{\prime} c_2}$ for $c_{1}^{\prime} \ge c_1$.

We now show (\ref{eq7.3.4}) and afterwards motivate the inequalities.  We may assume, by induction on $n$,
\begin{equation}
\label{eq7.15.3}
P_x(A_1 (n-1)^c) \le (n-1) e^{-c_2 h(2)};
\end{equation}
(\ref{eq7.15.3}) trivially holds for $n-1 \le \frac{1}{2}h(2)$ on account of (\ref{eq7.12.1}).  
Because of (\ref{eq7.13.1}) and
(\ref{eq7.15.2}), with $\ell = 8$, it follows from (\ref{eq7.15.3}) that
\begin{equation}
\label{eq7.15.4}
P_x(A_1 (n)^c) \le n e^{-c_2 h(2)}.
\end{equation}
Consequently, (\ref{eq7.15.4}) holds for all $n\le t_0$ and, in particular,
\begin{equation}
\label{eq7.15.4p}
P_x(A_1 (t_0)^c) \le t_0 e^{-c_2 h(2)} = (h(2))^{12} e^{-c_2 h(2)}.
\end{equation}
As pointed out after (\ref{eq7.12.1}), $Y^{(\epsilon,\kappa)}(T) = h(1)$, with $T\le t_0$, can only occur under this event.

We have so far not employed (\ref{eq7.15.1}).  By (\ref{eq7.15.1}),
\[P_x(T > t_0; A_1(t_0)) \le 2 e^{-c_2 h(2)}.\]
Together with (\ref{eq7.15.4p}), this implies that the event in (\ref{eq7.3.4}) occurs with probability at most
\begin{equation}
\label{eq7.15.4a}
2(h(2))^{12} e^{-c_2 h(2)} \le 1/h(2), 
\end{equation}
with the inequality holding because of the assumption on $c_1$ given above and $h(2) \ge c_1$.  
This implies (\ref{eq7.3.4}).

The rest of the proof of (\ref{eq7.3.4}) is spent justifying (\ref{eq7.15.1}) and (\ref{eq7.15.2}).  
We begin by justifying (\ref{eq7.15.2}), which requires most of the work; (\ref{eq7.15.1}) will then
follow quickly. 

The exponential bounds in (\ref{eq7.15.2}) follow from elementary large deviation bounds,
involving sums of i.i.d. random variables, that can be
obtained by using Markov's Inequality or by comparison with a birth-death process.  
We assume here that the reader is familiar with such bounds.  Since the bounds for 
some of the previous indices $\ell$ are used to derive
(\ref{eq7.15.2}) for larger $\ell$, the coefficient $c_{2,\ell}$ in the exponent of the inequality corresponding
to (\ref{eq7.15.2}) for a specific $\ell$ may depend on $\ell$; after computing this bound for each $\ell$, 
one can then set $c_2 = \min_{\ell} c_{2,\ell}$ to obtain (\ref{eq7.15.2}). 

The inequality in (\ref{eq7.15.2}) for $A_2 (n)^c$ follows from elementary large deviations bounds and the 
JSQ rule (with handicap $\kappa$).  To see this, note that there 
is initially only one job at the designated queue and
(a) The number of moderate and large jobs entering 
the network will be of smaller
order of magnitude than $1/(1000\epsilon)$ off of a set of probability $e^{-h(2)}$ 
(since $1/(1000\epsilon) \ge (h(3))^5/1000 \ge h(2)$ and $t_0 = (h(3))^4 \le 10^{-6}(h(3))^5 \le 10^{-6}/\epsilon$). 
(b) Work from quick jobs enters the network at rate
at most $1/100$ (since $\gamma_0 \le 1/200$), which is less than the rate $1$ of service at the designated queue.  
Comparison with the corresponding birth-death process therefore implies that the net number of quick jobs entering 
and leaving the designated queue 
up to any time before $t_0$ is of smaller order than $1/(1000\epsilon)$, off of a set of probability $e^{-h(2)}$
(since $t_0 e^{-1/(1000\epsilon)} \le (h(3))^4 \exp\{-(h(3))^4\} \ll e^{-h(2)}$). 

The inequality
for $A_3 (n)^c$ follows immediately from the inequality for $A_2 (n)^c$, since it is assumed that the
handicap $\kappa$ is at most $K^{\epsilon} \le 1/(2000\epsilon)$ and so 
$1/(1000 \epsilon) + K^{\epsilon} +1 \le 1/(500\epsilon)$.

In order to show the inequality (\ref{eq7.15.2}) for $A_4 (n)^c$, note that when the other queue is empty but the
designated queue is not, the rate at which jobs with service time $1$ arrive at the other queue is 
$2(1-\eta) \ge 9/5$.  
When these jobs are being served, the other queue is not empty.
Under $A_{1}(n)$, the designated queue is never empty on $[0,n]$.  
Noting that $1-\frac{9}{5}\cdot\frac{5}{14} = \frac{5}{14}$, the frequency 
over $[0,n]$, for which the other queue is empty, is therefore typically at most $5/14$.  The 
inequality in (\ref{eq7.15.2}) follows from an elementary large deviations bound.  

We next show (\ref{eq7.15.2}) for $A_5 (n)^c$.  On $A_1 (n)$, the
other queue remains the same over $(0,n]$.  Over this interval, the
set of times having a job at the other queue, with residual service
time strictly greater than $\delta$, is the union of disjoint intervals,
each with length at least $2/3$, except for the first interval (since $h(1) = 1$ and $\delta < 1/3$).  The number of
such intervals is at most $\frac{3}{2}n + 2 \le 2n$.  Note that no quick jobs are served
at the other queue over such intervals.  Also, at most $n$ moderate or large
jobs entering the other queue over $(0,n]$ can complete their service over $(0,n]$.
It follows that, on $A_1 (n) \cap A_3 (n)$, there
are at most $1/(500\epsilon)$ jobs arriving at the other queue over
any such interval, and hence at most $n/(250\epsilon) + n \le n/(200\epsilon)$ jobs arriving
at the other queue over such intervals. The inequality (\ref{eq7.15.2})
for $A_5 (n)^c$ therefore follows from that for $A_3 (n)^c$.

Ordering these at most $n/(200\epsilon)$ jobs in the order in which
they arrive at the other queue, the service time of a job is 
independent of the service time of previous such jobs (although the
total number of arriving jobs will not be independent).  Since the
probability a job is either moderate or large is less than $\epsilon$,
the probability of there being at least $n/100$ moderate and large
jobs among $n/(200\epsilon)$ jobs, $n \ge h(2)/2$, is exponentially
small in $h(2)$.  The inequality (\ref{eq7.15.2}) for $A_6 (n)^c$
therefore follows from that for $A_5 (n)^c$.

For $A_7 (n)^c$, note that there will typically be approximately $2n/\epsilon$ arrivals in the network by
time $n\ge \frac{1}{2} h(2)$ and so, by an elementary large deviations argument, there will be at least
$3n/\epsilon$ arrivals with an exponentially small probability in $h(2)$.  At time 
$T_0$, there are at most $1/(500\epsilon)$ jobs at the other queue, and so there will be a total of at most 
$4n/\epsilon$ jobs at the other queue after
time $T_0$.  The amount of time spent serving a job after it has residual service time $\delta$ is of course
$\delta$, and so the time spent serving all of these jobs is at most $4(\delta / \epsilon)n \le \frac{1}{50}n$,
which demonstrates (\ref{eq7.15.2}) in this case as well.

We still need to show the inequality for $A_8 (n)^c$.  For this, note that by (\ref{eq7.15.2}), with $\ell = 4$
and $\ell = 7$, 
\begin{equation}
\label{eq7.18.1}
P_x\left(A_1 (n) \cap (A_4 (n)^c \cup A_7(n)^c)\right) \le 2e^{-(c_{2,4}\wedge c_{2,7}) h(2)},
\end{equation}
and denote by $H\subseteq [T_0,n]$ the random set where there is at least one job at the other queue with residual service
time strictly greater than $\delta$.  Then (\ref{eq7.18.1}) gives an upper bound on the probability that $A_1 (n)$ 
occurs and the measure of $H$ is less than
\[n - T_0 - \left(\frac{5}{13}n + \frac{1}{50}n\right) \ge \frac{7}{12}n - T_0.\] 
Setting $H^{\prime} = (0,T_0] \cup H$, this implies $|H^{\prime}| \ge \frac{7}{12} n$ off of the exceptional event.  
Since the service time of an arriving job is independent of the service times of previous jobs and $\eta \le 1/100$, 
a large deviations bound
implies that at least $\frac{8}{7}n$ moderate and large jobs enter the network on $H^{\prime}$, off of an event
of exponentially small probability in $h(2)$.  

On the other hand, on $A_6 (n)$, at most $\frac{1}{100}n$ moderate and large jobs arrive at the other queue
during $H$.  Also, on $A_2 (n)$, no jobs can arrive at the other queue before time $T_0$, since the 
handicap $\kappa \le 1/(2000\epsilon)$.  So, on 
$A_2 (n) \cap A_6 (n)$, at most $\frac{1}{100}n$ moderate and large jobs arrive 
at the other queue during $H^{\prime}$.  Together
with the last paragraph and (\ref{eq7.15.2}), for $\ell =2$ and $\ell = 6$, this implies that, off of an event
of exponentially small probability in $h(2)$, when $A_1(n)$ occurs, at least $\frac{8}{7}n - \frac{1}{100}n \ge n$ 
medium and large jobs
arrive at the designated queue over $H^{\prime}$, and hence over $(0,n]$.  This gives the inequality
(\ref{eq7.15.2}) for $A_8 (n)^c$.

%
We now demonstrate (\ref{eq7.15.1}).  Setting $n=t_0$ and $\ell=8$, we first note that, by (\ref{eq7.15.2}),
\begin{equation}
\label{eq7.18.1a}
P_x(A_1(t_0) \cap A_8(t_0)^c) \le e^{-c_2 h(2)}.
\end{equation}
%
%
%
%
On $A_8(t_0) \subseteq A_1(t_0)$, at least $t_0 = (h(3))^4$ medium and large jobs arrive at the designated queue over $(0,t_0]$.  
The probability such a job has service time at least $h(3)$ is greater than $(h(3))^{-3}$.  So, when
$(h(3))^4$ such jobs occur, the probability at least one of them has service time at least $h(3)$ is greater than
$1 - 2e^{-h(3)}$.  The bound (\ref{eq7.15.1}) follows from this and (\ref{eq7.18.1a}).  This completes the
proof of the proposition.
%
%
%
%
\end{proof}
We now prove Proposition \ref{lem7.4.1}.  The argument involves repeated couplings.
\begin{proof}[Proof of Proposition \ref{lem7.4.1}]
We apply induction to demonstrate the proposition.  The case where $i=2$ was demonstrated in Proposition \ref{lem7.3.3},  
so we still need to show
that (\ref{eq7.4.2}) holds for $i+1$ given that it holds for $i$.  The argument for this involves repeatedly coupling
the process $X^{(\epsilon,\kappa)}(\cdot)$, with $Y^{(\epsilon,\kappa)}(0) = h(i+1)$, to one of two processes 
$X^{(\epsilon,\kappa)}(\cdot)$ and $ X^{(\epsilon,\kappa+1)}(\cdot)$, with
$Y^{(\epsilon,\kappa)}(0) = Y^{(\epsilon,\kappa+1)}(0) = h(i)$.  In our couplings, we refer to the  
networks corresponding to  $X^{(\epsilon,\kappa)}(\cdot)$, with $Y^{(\epsilon,\kappa)}(0) = h(i+1)$, 
on the one hand, and to the other two processes, on the other hand,
as the \emph{first} and \emph{second} networks.  As in the proof of Proposition \ref{lem7.3.3}, we may assume that, 
at time $0$, the designated queue of the first network has only a single 
job, consisting of a designated job at $h(i+1)$.   

We compare the first network, with index $(\epsilon, \kappa)$ for given $\kappa\le K^{(\epsilon)}-i$, to 
the second network with the same index and having the exact same number of jobs with the exact same residual
service times and designated job, except that the designated job of the second network has residual service time
$h(i)$ instead of $h(i+1)$.  (Recall that the designated job is not required to have the largest residual
service time in the network.)   One can couple the processes so that their evolution is exactly the same,
with respect to service of individual jobs and arrivals, until either (a) the 
residual service time of the designated job of the second network reaches $h(i-1)$, 
(b) an arrival at the designated queue has service time $h(i)$, or (c) an arrival at the designated queue has service time at least
$h(i+1)$.  In the last two cases, the arriving job becomes the designated job in the second network
and, in the last case, it becomes the designated job in the first network.  

By the induction assumption, 
the event in (a) occurs with probability at most $1/h(i)$.  We consider
such an event a \emph{minor failure}.  At the time $\sigma$ at which it occurs, the designated job of the 
first network will have residual service time 
\[h(i+1)-h(i)+h(i-1) \ge h(i+1) - h(i).\]   
When the event in (c) occurs, we refer to it as a \emph{minor success}.  

At the time $\sigma$ the event in (b) occurs, we change the coupling.  We now couple the first network to the second network
with index $(\epsilon, \kappa+1)$, and having the same 
number of jobs with the same residual service times and designated queue as the first
network, after 
ignoring the designated job of the first network.  The second network therefore has one less job than the first
network at time $\sigma$.  
The job that has arrived at time $\sigma$ has service time $h(i)$
and so will be the designated job for the second network.  It has the greatest residual service time 
for the designated queue in the first
network, except for the designated job, and will be served last except for that job; it will referred to as 
the \emph{associate designated job} of the first network. 

Since the first and second networks differ only by the presence of the large designated job in the former, and since
the handicap of the second network is one greater than that for the first network, the networks can be coupled so
that their evolution is the same until the time $\sigma_1$ at which either the event (a) or the event (c) in the
second to last paragraph occurs.  Under (a), a minor failure occurs whereas, under (c), a minor success occurs.
The minor failure occurs with probability at most $1/h(i)$, at which time the associate designated job of the first
network and the designated job of the second network each have residual service time $h(i-1)$ and the designated job
of the first network still has residual service time at least $h(i+1)-h(i)$.

Combining the preceding two couplings, it follows from the above reasoning that the probability a minor failure occurs before a
minor success is at most $2/h(i)$.  One can repeat this reasoning up to $n(i)$ times, where
\[n(i) \stackrel{\text{def}}{=}\lfloor h(i+1)/h(i)\rfloor -2,\]
so that either (d) $n(i)$ minor failures or (e) a minor success has occurred.  At this time $\sigma_{2}$, the residual
service time of the designated job of the first network is still greater than $h(i)$.  The probability of the event
in (d) is at most $(2/h(i))^{n(i)}$.  If the residual service time of this job decreases to $h(i)$ before (e) occurs,
we say a \emph{failure} occurs. 

When a minor success occurs, the arriving job has service time at least $h(i+1)$ and 
becomes the designated job.  If the service time of the arriving job is at least $h(i+2)$, we
call the minor success a \emph{success} and stop the procedure.  If a minor success that is not a success occurs, one can 
repeat the above comparisons, starting as before with a single job at the designated queue.  
Letting $Q(i)$ denote the number of minor successes occurring before a failure and $B$ denote the event that a
success occurs before a failure, it follows from the previous paragraph
and the definition of $h(\cdot)$ that
\begin{equation}
\label{eq7.7.1}
\begin{split}
& P_x\left(Q(i) \le e^{2\sqrt{h(i+1)}};\, B^c\right) \le e^{2\sqrt{h(i+1)}}\left(\frac{2}{h(i)}\right)^{n(i)} \\ 
& \qquad \qquad \qquad \qquad \le \left(\frac{2e}{h(i)}\right)^{h(i+1)/h(i)} \le \frac{1}{2h(i+1)}. 
\end{split}
\end{equation}
%
%

On the other hand, the probability that a minor success will actually be a success is at least 
\begin{equation}
\label{eq7.7.2}
\frac{1}{2}(h(i))^i / (h(i+1))^{i+1} \ge 1/ (h(i+1))^{i+1} \ge e^{-\sqrt{h(i+1)}}.
\end{equation}
Together with (\ref{eq7.7.1}), (\ref{eq7.7.2}) implies that the probability of a failure occurring before a success
is at most 
\begin{equation*}
\begin{split}
&\left(1-e^{-\sqrt{h(i+1)}}\right)^{e^{2\sqrt{h(i+1)}}} + \frac{1}{2h(i+1)} \\
&\qquad \qquad \qquad \le \text{exp}\left\{-e^{\sqrt{h(i+1)}}\right\} + \frac{1}{2h(i+1)} \le \frac{1}{h(i+1)}.
\end{split}
\end{equation*}
Since success and failure correspond to the two events in  the statement of Proposition
\ref{lem7.4.1}, this implies (\ref{eq7.4.2}) of the proposition.
\end{proof}

\section*{Acknowledgment}  The author thanks an anonymous referee for a careful reading of the
paper.

\end{document}